\setlist{
  listparindent=\parindent,
  parsep=0pt,
}
\numberwithin{equation}{section}
\theoremstyle{plain} 
\newtheorem{theorem}{Theorem}[section]
\newtheorem{lemma}[theorem]{Lemma}
\newtheorem{proposition}[theorem]{Proposition}
\newtheorem{definition}[theorem]{Definition}
\theoremstyle{definition} 
\newtheorem{remark}[theorem]{Remark}
\renewenvironment{abstract}{%
  \small%
  \providecommand\keywords{%
    \par\medskip\noindent\textit{Keywords:}\xspace}%
  \begin{center}%
    \bfseries \abstractname\vspace{-.5em}\vspace{\z@}%
  \end{center}%
  \quote%
}{\endquote}
\newcommand{\Vect}[1]{\mathbf{#1}}
\newcommand{\cG}{\mathcal{G}}
\newcommand{\cF}{\mathcal{F}}
\newcommand{\cS}{\mathcal{S}}
\newcommand{\R}{\mathbb{R}}
\newcommand{\N}{\mathbb{N}}
\newcommand{\sign}{\mbox{sign}}
\newcommand{\bX}{\mathbf{X}}
\newcommand{\bx}{\mathbf{x}}
\newcommand{\dd}{\mathrm{d}}
\newcommand{\dom}{\mathrm{Dom}}
\newcommand{\MSE}{\mathrm{MSE}}
\newcommand{\MLE}{\mbox{\scriptsize\sc{mle}}}
\newcommand{\PR}{\mbox{\scriptsize\sc{pr}}}
\newcommand{\Gain}{\mathrm{Gain}}
\def\norm#1{\|#1\|}
\DeclareMathOperator\E{\mathbb{E}}
\renewcommand{\P}{\mathbb{P}}
\def\1{\mathbbm{1}}
\newcommand{\com}[1]{#1}
\begin{document}

\title{Stein estimation of the intensity of a  spatial homogeneous Poisson point process}

\author[1]{Marianne Clausel}
\author[1]{Jean-Fran\c{c}ois Coeurjolly}
\author[1]{Jérôme Lelong}
\affil[1]{Univ. Grenoble Alpes, LJK, F-38000, Grenoble, France.\\

\texttt{Marianne.Clausel@imag.fr},
\texttt{Jean-Francois.Coeurjolly@upmf-grenoble.fr},\\
\texttt{Jerome.Lelong@imag.fr}.}

\date{}

\maketitle

\begin{abstract}
  In this paper, we revisit  the original ideas of Stein and propose an estimator of the intensity parameter of a homogeneous Poisson point process defined on $\R^d$ and observed on a bounded window. The procedure is based on a new integration by parts formula for Poisson point processes. We show that our Stein estimator outperforms the maximum likelihood estimator in terms of mean squared error. In many practical situations, we obtain a gain larger than 30\%.

  \keywords Stein formula; Malliavin calculus; superefficient estimator; intensity estimation; spatial point process.
\end{abstract}

\section{Introduction}

Spatial point processes are stochastic processes modeling points at random locations  in arbitrary domains. General references on this topic are \citet{daley08,stoyan:kendall:mecke:95,moeller:waagepetersen:04} who cover theoretical as well as practical aspects. Among all models, the reference is the Poisson point process, which models points without any interaction. When the Poisson point process has a probability measure invariant under translation, we say that it is stationary or homogeneous. In this paper, we consider a homogeneous Poisson point process defined on $\R^d$ and observed through a bounded window $W\subset\R^d$. This point process is characterized by the single intensity parameter $\theta>0$, which is the mean number of points per volume unit. It is well--known that the maximum likelihood estimator of the parameter $\theta$, defined as the ratio of the number of points lying in $W$ divided by its volume, is unbiased and efficient. In this work, we explain how to build superefficient and therefore biased estimators of $\theta$ by revisiting the original ideas of Stein.

Based on the pioneering works \cite{stein:56} and \cite{stein:61}, \cite{stein:81} explained how to design a whole collection of estimators for the mean $\mu$ of a $p$-dimensional Gaussian random vector $X$ by using the famous Stein formula for Normal random variables: for any differentiable function $g : \R^p \rightarrow \R$ such that $\E \|\nabla g(X)\| < +\infty$, the following integration by parts formula holds
\begin{align}
  \label{eq:gauss_ipp}
  \E (\nabla g(X) ) = \E( (X-\mu) g(X)).
\end{align}
Stein suggested to consider estimators of the form $X + \nabla \log f(X)$ for positive and sufficiently smooth functions $f$. For this class of estimators, he showed using~\eqref{eq:gauss_ipp} that the mean squared error is related to the expectation of $\nabla^2 \sqrt{f(X)} / \sqrt{f(X)}$ thus providing an easy construction of estimators achieving a mean squared error smaller than the one of the maximum likelihood estimator.

A close look at the methodology developed by Stein reveals the key role played by the integration by parts formula~\eqref{eq:gauss_ipp}, in which the involved differential operator is the classical notion of derivative. This remark proved to be of prime importance as these computations rely on the standard chain rule for the derivative operator related to the integration by parts formula. Hence, to extend this methodology to other frameworks, one first needs a derivative operator satisfying the classical chain rule and second an integration by parts formula for this operator. In the case of Gaussian processes, these objects are defined by the Malliavin calculus and Stein estimators have been proposed by~\cite{Privault2006607,privault2008}.

Let us focus on the Poisson case. An integration by parts formula already exists for functions of Poisson random variables. Let $Y$ be a Poisson random variable with parameter $\lambda$ and $f$ a sufficiently integrable real valued function, then it is known from~\cite{Chen75} that $\E(Y f(Y)) = \lambda \E(f(Y+1))$. However, this formula involves a discrete derivative operator which does not satisfy the standard chain rule and which, therefore, cannot be used as the basement for designing new estimators.

\com{Integration by parts formulae for Poisson processes have a long history, see~\cite{privault:09}, \cite{murr:2012} for a recent review. The differences are explained by the use of different concepts of differential operators. As already outlined, we ruled out results based on the finite difference operator since it does not satisfy the chain rule property. Two other classes of differential operators exist. The first one was developed by~\citet{albeverio:kondratiev:rockner:96} and was further investigated in different contexts, see \citet{albeverio:kondratiev:rockner:98,rockner:schied:99} or more recently~\citet{decreusefond:joulin:savy:10}.
The second class is based on the damped gradient, first introduced in the one-dimensional case by \citet{carlen:pardoux:90,elliott:tsoi:93} and further developed by \citet{fang:malliavin:93,prat:privault:99,privault:09,privault:torrisi:11}.
The main difference between these two classes is the space of Poisson functionals used to derive the integration by parts formula (see Section~\ref{sec:comp:referee:content} after our main result for more details). Note that links between these gradient operators exist, see e.g.~\citet{prat:privault:99}.
The key--ingredient to develop a Stein estimator is to obtain an integration by parts formula of the form $\E (\nabla F)= \E ( F (N(W)-\theta |W|))$ where $F$ is a Poisson functional, $\nabla$ is a gradient operator, $N(W)$ measures the number of points falling into a bounded domain $W$ of $\R^d$, $|W|=\int_W \dd u$ and $\theta$ is the intensity parameter of a homogeneous Poisson point process. Before 2009, none of the integration by parts formula available in the literature could be directly applied to get the required equation (see again Section~\ref{sec:comp:referee:content}). \citet{privault:reveillac:09} reworked the differential operator proposed by~\citet{carlen:pardoux:90} and managed to derive the desired equation in the one-dimensional case, but their differential operator could not be extended to spatial Poisson point processes. We aim at filling this gap in the present paper.
}

\com{In Section~\ref{s:malliavin:derivative}, we design a differential operator for functionals of a spatial homogeneous Poisson point process, which satisfies the classical chain rule and further leads to an integration by parts formula in Section~\ref{sec:ipp}. Sections~\ref{s:malliavin:derivative} and~\ref{sec:ipp}} heavily rely on the theory of closable and adjoint operators, which makes some of the proofs become technical. We have decided to gather all these technicalities in Appendix to avoid being diverted from our main objective, namely devising superefficient estimators on the Poisson space.
Based on this integration by parts formula and its related derivative operator, we propose in Section~\ref{sec:stein} a wide class of Stein estimators and study their mean squared errors. In Section~\ref{sec:sim}, we lead a detailed numerical study of our Stein estimator in several realistic examples. In particular, we explain how to pick in practice the estimator with minimum mean squared error within a given class and in the one--dimensional case, we compare it  to the estimator proposed by \cite{privault:reveillac:09}.

\section{Background and notation}\label{s:malliavin:derivative}

\subsection{Some notation}

Elements of $\R^d$ are encoded as column vectors, i.e. if $x \in \R^d$,
$x=(x_1,\dots,x_d)^\top$ and we denote their Euclidean norms by $\norm{x}^2 =
{x^\top x} = {\sum_{i=1}^d x_i^2}$.  Let $W$ be a bounded open set of
$\R^d$. For any $k \in \N$, the set $\mathcal{C}^k(W,\R^p)$ (resp.
$\mathcal{C}^k_c(W,\R^p)$) denotes the class of $k$-times continuously
differentiable functions defined on $W$ with values in $\R^p$ (resp. in a
compact subset of $\R^p$). Let $f: W \longrightarrow \R$ be a locally
integrable function. A function $h$ is said to
be the {weak derivative} on $W$ of $f$ w.r.t.  $x_i$ if for any
$\varphi\in\mathcal{C}^1_c(W,\R)$, we have
\[
  \int_{W} h(x)\varphi(x)\dd x=-\int_{W}f(x)\frac{\partial \varphi(x)}{\partial x_i} \dd x.
\]
When such a function $h$ exists, it is unique a.e. and we denote it by
${\partial f}/\partial{x_i}$ in the sequel.  When $d=1$, we  use the classical
notation $f'$ to denote the weak derivative of the function $f$. When all the
weak partial derivatives of a real--valued and locally integrable function $f$
defined on $W$ exist, we can define its {weak gradient} on $W$ as
\begin{equation}\label{e:grad}
 \nabla f(x)= \left(\frac{\partial f}{\partial x_1}(x),\dots,\frac{\partial
      f}{\partial x_d}(x)\right)^\top,\quad \forall \;x \in\R^d.
\end{equation}
For a locally integrable vector field $V=(V_1,\dots,V_d)^\top$ defined from
$W$ into $\R^d$ such that for all $i$, $V_i$ admits a weak partial derivative
w.r.t. to $x_i$, we define the {weak divergence} of $V$ on $W$ as
\begin{equation}\label{e:div:physicians}
  \nabla\cdot V(x)=\sum_{i=1}^d \frac{\partial V_i}{\partial x_i}(x),\,\quad
  \forall \; x\in\R^d.
\end{equation}

\subsection{Poisson point processes on $\R^d$}

For a countable subset $\bx$ of $\R^d$, we denote by $n(\bx)$ the number of elements in $\bx$. For any bounded Borel set $B$ of $\R^d$, $\bx_B$ stands for $\bx\cap B$ and $|B|$ stands for the Lebesgue measure of $B$. We define the set of locally finite configurations of points by $N_{lf} = \{
  \bx \subset \R^d : n(\bx_B)<\infty \; \mbox{ for all bounded sets } B\subset \R^d
  \}$.
We equip $N_{lf}$ with the $\sigma$-algebra $\mathcal N_{lf} = \sigma \big( \big\{
    \bx \in N_{lf} : n(\bx_B)=m  \big\}  : B\in \mathcal B_0, m\in \N\setminus\{0\}\big)$
  where $\mathcal B_0$ is the class of bounded Borel sets of $\R^d$. Then, a spatial point process $\bX$ on $\R^d$ is simply a measurable mapping on some probability space $(\Omega,\mathcal F,\P)$ with values in $(N_{lf},\mathcal N_{lf})$.

Let $W\subset\R^d$ be a compact set with positive Lebesgue measure $|W|$ playing the role of the observation window of $\bX$. \com{We assume that $W$ has a $\mathcal{C}^2$ boundary, so that the function $x\mapsto d(x,W^c)$ is also $\mathcal{C}^2$ in a neighborhood of $\partial W$.} We denote the number of points in $W$ by $N(W)=n(\bX_W)$; a realization of $\bX_W$ is
of the form $\bx=\{x_1,\dots,x_n\}\subset W$, for some $0\le
n<\infty$. If $n=0$, then $\bx=\emptyset$ is the empty point pattern in $W$. For
further background material and theoretical details on spatial point
process, see e.g.~\cite{daley:vere-jones:03,daley08} and
\cite{moeller:waagepetersen:04}. Given $N(W)=n$, we denote by $X_1,\dots,X_n\in W$ the $n$ location points.

In this paper, $\bX$ is a homogeneous Poisson point process, defined on $\R^d$, observed in $W$ and with intensity parameter $\theta>0$. Remember that the distribution of $\bX$  is entirely characterized by the void probabilities $\P(\bX\cap B=\emptyset) =e^{-\theta|B|}$ for any bounded $B\subset \R^d$. The more standard properties are: (i) $N(B)$ follows a Poisson distribution with parameter $\theta|B|$ for any bounded $B$. (ii) For $B_1,B_2,\dots$ disjoint sets of $\R^d$, $N(B_1),N(B_2),\dots$ are independent random variables. \com{Another characterization can be made using the generating function of $\bX$ (see e.g. \citet{moeller:waagepetersen:04}): for any  function $h:\R^d \to [0,1]$ setting $\exp(-\infty)=0$
\[
  \E  \prod_{u\in \bX} h(u)  = \exp \left( -\theta \int_{\R^d} (1-h(u))  \dd u \right).
\]
Let $\cF_W$ be the $\sigma-$field on $\Omega$ generated by the points of $\bX$ on $W$. In the following, we work on $(\Omega, \cF_W, \P)$ and write $L^2(\Omega) = L^2(\Omega, \cF_W, \P)$.}

\subsection{Poisson functionals and Malliavin derivative}
We introduce the following space
\begin{align} \label{eq:S}
  {\mathcal S} = \bigg\{ & F =f_0 \1 (N(W)=0) + \sum_{n\geq 1} \1(N(W)=n)
  f_n(X_1,\dots,X_n),
   \quad \mbox{ with } f_0 \in \R, \nonumber\\
  & \forall n \ge 1 \; f_n\in L^1(W^n,\R)
   \mbox{ is a symmetric function} \bigg\}.
\end{align}
The functions $f_n$ are called the {form functions} of $F$. Since $\bX$ is a Poisson point process, we have
\begin{equation} \label{eq:EF}
  \E[F]=e^{-\theta |W|}f_0 +e^{-\theta |W|} \sum_{n\geq 1} \frac{\theta^n}{n!} \int_{W^n}f_n(z_1,\dots,z_n)\dd z_1\dots \dd z_n.
\end{equation}
Note that with the choice of the $\sigma-$field ${\mathcal F}_W$, $L^2(\Omega) = \{F \in {\mathcal S} \; : \; \E[F^2] < \infty\}$.
For any $F \in L^2(\Omega)$, we denote the norm $\|\cdot\|_{L^2(\Omega)}$ by
\[
  \|F\|_{L^2(\Omega)}=\E[F^2]^{1/2} = \left(e^{-\theta |W|}f_0^2 +e^{-\theta |W|} \sum_{n\geq 1}
  \frac{\theta^n}{n!} \int_{W^n}f_n^2(z_1,\dots,z_n)\dd z_1\dots \dd
  z_n\right)^{1/2}.
\]
\com{In view of the expression of the norm, the
convergence in $L^2(\Omega)$ is linked to the convergence of the form functions.
\begin{lemma}\label{lem:cv}
  Let $\ell\geq 1$, $F,F_\ell\in L^2(\Omega)$ and $(f_{\ell,n})$ (resp $f_n$) be the form functions of the Poisson functionals $F_\ell$ (resp $F$).
We have
  $F_\ell\to F\mbox{ in }L^2(\Omega)$ iff
  \[
  e^{-\theta |W|}|f_{\ell,0}-f_0|^2 +e^{-\theta |W|} \sum_{n\geq 1}
  \frac{\theta^n}{n!} \int_{W^n}|f_{\ell,n}(z_1,\dots,z_n)-f_n(z_1,\dots,z_n)|^2\dd z_1\dots \dd
  z_n\to 0
  \]
  as $\ell\to\infty$.
\end{lemma}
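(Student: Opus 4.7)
The plan is to read the lemma as an almost direct corollary of the explicit expression for the $L^2(\Omega)$ norm in terms of form functions, once we recognize that the set $\mathcal S$ is stable under linear combinations and that the map $F\mapsto (f_n)_{n\ge 0}$ is linear.

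First, I would observe that both $F_\ell$ and $F$ belong to $\mathcal S$, and since on each event $\{N(W)=n\}$ the decomposition of $F_\ell - F$ is obtained by taking the difference of the two sums in~\eqref{eq:S}, the Poisson functional $F_\ell - F$ itself belongs to $\mathcal S$ with form functions $(f_{\ell,n}-f_n)_{n\ge 0}$. This requires only a small check that the difference of two symmetric $L^1$ functions is again a symmetric $L^1$ function (which is immediate) and that the representation~\eqref{eq:S} of $F_\ell -F$ agrees a.s. with the termwise difference, which follows from the partition $\Omega = \bigcup_n \{N(W)=n\}$ up to a $\P$-null set.

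Next, I would simply apply the explicit norm formula displayed just above the lemma to the functional $G_\ell := F_\ell - F \in L^2(\Omega)$ (which lies in $L^2(\Omega)$ by linearity of the space and the triangle inequality). This gives
\[
  \|F_\ell - F\|_{L^2(\Omega)}^2
  = e^{-\theta|W|}|f_{\ell,0}-f_0|^2
  + e^{-\theta|W|}\sum_{n\ge 1}\frac{\theta^n}{n!}
    \int_{W^n} |f_{\ell,n}(z_1,\dots,z_n) - f_n(z_1,\dots,z_n)|^2 \dd z_1\dots\dd z_n .
\]
The equivalence stated in the lemma is then nothing but the equivalence between convergence to zero of $\|F_\ell - F\|_{L^2(\Omega)}$ and convergence to zero of the right-hand side.

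The only subtlety worth mentioning is the well-posedness of the form functions: one should note that the $f_n$ attached to a given $F \in \mathcal S$ are uniquely determined up to Lebesgue-null sets (by conditioning on $\{N(W)=n\}$ and using that given $N(W)=n$ the points $(X_1,\dots,X_n)$ are uniformly distributed on $W^n$ after symmetrization), so that the quantity on the right-hand side is well defined and does not depend on the choice of representatives. Beyond this, the proof is purely bookkeeping; there is no real analytic obstacle.
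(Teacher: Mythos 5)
Your argument is correct and is exactly the (essentially immediate) argument the paper relies on: the lemma is stated right after the explicit norm formula precisely because $\mathcal S$ is linear, the form functions of $F_\ell - F$ are $f_{\ell,n}-f_n$, and hence the displayed quantity is nothing but $\|F_\ell - F\|_{L^2(\Omega)}^2$. The remark about well-posedness of the form functions (a.e.\ uniqueness, via conditioning on $\{N(W)=n\}$) is a reasonable thing to note but not strictly needed; the paper omits the proof entirely, so no comparison of approaches is called for.
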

The following subspace $\cS^\prime$ of $\cS$ plays a crucial role in the sequel
\begin{align*}
  \cS^\prime = \big\{F \in \cS: \,\exists C>0 \text{ s.t. } &\forall n\geq 1, f_n \in \mathcal C^1(W^n,\R)\text{ and }  \\
  & \|f_n\|_{L^\infty(W^n,\R)}+\sum_{i=1}^n\|\nabla_{x_i}f_n\|_{L^\infty(W^n,\R^d)}\leq C^n\big\}.
\end{align*}

}
\com{In particular, the definition of $\mathcal S^\prime$ ensures that $FG \in \mathcal S^\prime$ whenever $F,G\in \mathcal S^\prime$.} We fix a real--valued function $\pi:W^2 \to \R^d$, referred to as the weight function in the sequel. We assume that $\pi$ is bounded and that for a.e. $x\in W$, $z\mapsto \pi(z,x)$ belongs to $\mathcal{C}^1(W,\R^d)$. For any $x \in W$, we denote by $D_x^\pi$ the following differential operator defined for any $F\in \mathcal S^\prime$ by
\begin{equation} \label{eq:def1}
D_x^{\pi} F =-\sum_{n\geq 1} \1(N(W)=n) \sum_{i=1}^n (\nabla_{x_i} f_n)(X_1,\dots,X_n) \pi(X_i,x)
\end{equation}
where $\nabla_{x_i}f_n$ stands for the gradient vector of  $x_i\mapsto f_n(x_1,\dots,x_{i-1},x_i,x_{i+1},\dots,x_n)$. The operator $D^\pi$ is a Malliavin derivative operator satisfies the classical differentiation rules.
\begin{lemma}\label{lem:productgf}
  Let $F,G\in\mathcal{S}^\prime$ and $g\in\mathcal{C}^1_b(\R, \R)$. Then $FG\in \mathcal{S}^\prime$ and $g(F)\in \mathcal S^\prime$ and for any $x\in W$
\[
D_x^{\pi} (FG)=(D_x^{\pi} F)G+F(D_x^{\pi} G)
\quad \mbox{ and } \quad
D_x^{\pi} g(F)=g'(F)D_x^{\pi} F.
\]
\end{lemma}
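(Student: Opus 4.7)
The strategy is to reduce both identities to the ordinary Leibniz and chain rules applied fiberwise on each event $\{N(W)=n\}$, on which any element of $\mathcal{S}'$ is simply a $\mathcal{C}^1$ symmetric function of $(X_1,\dots,X_n)\in W^n$.

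First I would identify the form functions of $FG$ and $g(F)$. Since the indicators $\1(N(W)=n)$, $n\ge 0$, are pairwise disjoint and sum to $1$, writing $F,G$ with form functions $(f_n),(h_n)$ and constants $f_0,h_0$ gives directly
\[
  FG = f_0 h_0\,\1(N(W)=0) + \sum_{n\ge 1} \1(N(W)=n)\,(f_n h_n)(X_1,\dots,X_n),
\]
so the form functions of $FG$ are the pointwise products $f_n h_n$; likewise $g(F)$ has form functions $g\circ f_n$ and constant term $g(f_0)$. Both families are visibly $\mathcal{C}^1$ and symmetric, so only the geometric bound remains to check for the membership $FG,g(F)\in\mathcal{S}'$.

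For the bound, the standard Leibniz rule gives
\[
  \|f_n h_n\|_\infty + \sum_{i=1}^n \|\nabla_{x_i}(f_n h_n)\|_\infty \le \Bigl(\|f_n\|_\infty+\sum_{i}\|\nabla_{x_i}f_n\|_\infty\Bigr)\Bigl(\|h_n\|_\infty+\sum_{i}\|\nabla_{x_i}h_n\|_\infty\Bigr) \le (C_F C_G)^n,
\]
so $FG\in\mathcal{S}'$ with constant $C_F C_G$. For $g(F)$, with $M$ bounding both $|g|$ and $|g'|$, one has $\|g\circ f_n\|_\infty\le M$ and $\|\nabla_{x_i}(g\circ f_n)\|_\infty \le M\|\nabla_{x_i} f_n\|_\infty$, so the total is bounded by $M(1+C_F^n)$, which is $\le C^n$ for $C$ large enough (e.g.\ $C=(M+1)(C_F+2)$).

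Finally, the differentiation rules follow by substituting these form functions into the definition \eqref{eq:def1} and applying the classical pointwise identities $\nabla_{x_i}(f_n h_n) = (\nabla_{x_i} f_n)\,h_n + f_n\,\nabla_{x_i} h_n$ and $\nabla_{x_i}(g\circ f_n) = g'(f_n)\,\nabla_{x_i} f_n$. Since on $\{N(W)=n\}$ one has $F = f_n(X_1,\dots,X_n)$, the factors $F$, $G$ or $g'(F)$ can be pulled outside the inner sum over $i$, and the pairwise disjointness of the indicators $\1(N(W)=n)$ allows them to be pulled outside the sum over $n$ as well, yielding the stated formulas. The whole argument is essentially bookkeeping; the only mildly delicate point is the choice of constants in the geometric bounds, and no deeper obstruction arises.
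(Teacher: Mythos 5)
Your proof is correct and is essentially the argument the paper leaves implicit: the paper states this lemma without a written proof (the remark just before the definition of $D^\pi$ already asserts product-closure of $\mathcal{S}'$), treating it as a direct consequence of identifying form functions on each event $\{N(W)=n\}$ and applying the classical Leibniz and chain rules. You have correctly identified the form functions of $FG$ and $g(F)$, verified the geometric bounds defining $\mathcal{S}'$ (the product-of-sums estimate and the $M(1+C_F^n)\le C^n$ bound are both right), and correctly justified pulling $F$, $G$, $g'(F)$ through the sums over $i$ and $n$ using the disjointness of the indicators. Nothing is missing.
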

To have such rules, we had to consider a bespoke differential operator, which differs from the standard Malliavin derivative on the Poisson space (see e.g. \cite{privault:94}). Before establishing an integration by parts formula, we define the subset $\dom(D^\pi)$ of $\mathcal S'$ as
\begin{align}
\dom(D^\pi) =& \bigg\{ F \in \mathcal S^\prime: \forall n \geq 1 \mbox{ and }  z_1,\dots,z_n \in \R^d
& \nonumber\\
& {f_{n+1}}_{\big| z_{n+1}\in \partial W}(z_1,\dots,z_{n+1}) = f_n(z_1,\dots,z_n),\,
 {f_1}_{\big| z\in \partial W}(z)=f_0 \bigg\}. \label{eq:compatibility}
\end{align}
\com{The notation $D^\pi F$ stands for} the random field $x\in W\mapsto D_x^\pi F$.
The operator $D^\pi$ is defined from $\dom(D^\pi)\subset \mathcal
S^\prime$ on $L^2(\Omega,L^2(W,\R))$,
where $L^2(\Omega,L^2(W,\R))$ is the space of random fields $Y$ defined on $W$ such that
$\|Y\|_{L^2(\Omega,L^2(W,\R))}=\left(\E\left[\int_{W}|Y(x)|^2\dd x\right]\right)^{1/2}<\infty$.

\com{The link between $\dom(D^\pi)$ and $L^2(\Omega)$ is presented in the next result and proved in Appendix~\ref{s:density-dom}.
\begin{lemma}\label{lem:density-dom}
The set $\dom(D^\pi)$ defined by~\eqref{eq:compatibility} is dense in $L^2(\Omega)$.
\end{lemma}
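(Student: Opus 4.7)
The plan is to combine Lemma~\ref{lem:cv} with three successive approximations of $F \in L^2(\Omega)$: truncate the number of non-zero form functions, smooth each one, and adjust them near $\partial W$ to enforce~\eqref{eq:compatibility}. Lemma~\ref{lem:cv} reduces convergence in $L^2(\Omega)$ to a weighted convergence of the form functions, which allows the three reductions to be carried out independently.

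The first two steps are routine. Given $F \in L^2(\Omega)$ with form functions $(f_n)$, I first replace $F$ by the functional $F^{(N)}$ whose form functions agree with those of $F$ up to order $N$ and vanish beyond; Lemma~\ref{lem:cv} gives $F^{(N)} \to F$ in $L^2(\Omega)$ because $\sum_{n>N} \frac{\theta^n}{n!}\int_{W^n} f_n^2$ is the tail of a convergent series. Then each $f_n \in L^2(W^n)$, $n \le N$, can be approximated by a symmetric $\mathcal{C}^1$ function on $W^n$ with bounded sup-norm and gradient (mollify, symmetrize by averaging, and multiply by a smooth cutoff). Call the resulting Poisson functional $\bar F$, with form functions $(\bar f_n)$ and $\bar f_n = 0$ for $n > N$.

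The decisive step is the boundary correction. Fix $\epsilon > 0$ and, using the $\mathcal{C}^2$-regularity of $\partial W$, choose $\rho_\epsilon \in \mathcal{C}^1(W,[0,1])$ equal to $0$ on $\{d(\cdot,\partial W)\le \epsilon/2\}$ and to $1$ on $\{d(\cdot,\partial W)\ge \epsilon\}$. Set $\phi := \rho_\epsilon$, $\psi := 1-\rho_\epsilon$, and define, for all $n \ge 0$,
\begin{equation*}
  \tilde f_n(z_1,\dots,z_n) = \sum_{S \subseteq \{1,\dots,n\}} \Big(\prod_{i \in S} \phi(z_i)\Big)\Big(\prod_{i \notin S} \psi(z_i)\Big)\, \bar f_{|S|}\bigl((z_i)_{i \in S}\bigr),
\end{equation*}
well defined by symmetry of $\bar f_{|S|}$. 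Substituting $z_n \in \partial W$ forces $\phi(z_n)=0$ and $\psi(z_n)=1$, so only subsets with $n \notin S$ survive and the sum collapses to $\tilde f_{n-1}(z_1,\dots,z_{n-1})$; hence~\eqref{eq:compatibility} holds at every level. The identity $\sum_S \prod_S \phi \prod_{S^c} \psi = 1$ gives $\|\tilde f_n\|_\infty \le \max_k \|\bar f_k\|_\infty$ uniformly in $n$, and a similar count yields a polynomial-in-$n$ bound on the gradients, which fits the exponential bound defining $\mathcal{S}'$. Thus $\tilde F \in \dom(D^\pi)$.

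It remains to check that $\|\tilde F - \bar F\|_{L^2(\Omega)} \to 0$ as $\epsilon \to 0$. For $n \le N$, the difference $\tilde f_n - \bar f_n$ vanishes where every $z_i$ satisfies $\rho_\epsilon(z_i) = 1$ and is uniformly bounded elsewhere, so its $L^2(W^n)$ norm is controlled by a power of $|W_\epsilon|$, where $W_\epsilon := \{z \in W: d(z,\partial W)\le \epsilon\}$ has measure $O(\epsilon)$ by the $\mathcal{C}^2$ assumption. For $n > N$, every non-zero term in $\tilde f_n$ carries at least $n-N$ factors $\psi(z_i)$ supported in $W_\epsilon$; expanding and weighting by $\theta^n/n!$ leads, after elementary bookkeeping, to a bound that also tends to $0$ with $\epsilon$. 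Combining the three reductions yields an element of $\dom(D^\pi)$ arbitrarily close to $F$ in $L^2(\Omega)$. The main obstacle is the third step, where a single consistent family $(\tilde f_n)_{n\ge 0}$ has to satisfy the rigid compatibility~\eqref{eq:compatibility}, belong to $\mathcal{S}'$, and stay $L^2$-close to $\bar F$; the inclusion--exclusion formula automates the consistency, and the smallness of $|W_\epsilon|$ takes care of the $L^2$ error, including the extraneous form functions $\tilde f_n$ for $n > N$ that are forced to be non-zero by the compatibility.
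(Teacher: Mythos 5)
Your argument is correct but takes a genuinely different route from the paper's. After the common first step (approximating $F$ by a functional with $\mathcal{C}^1$ form functions, which follows from Lemma~\ref{lem:cv} and the density of $\mathcal{C}^1(W^n,\R)$ in $L^2(W^n,\R)$), the paper does \emph{not} truncate: it enforces the compatibility conditions~\eqref{eq:compatibility} by an inductive, level-by-level boundary correction, replacing $f_{\ell,n}$ on an $\varepsilon_{\ell,n}$-collar of $W^{n-1}\times\partial W$ by a convex combination --- weighted by $d(z_n,W^c)/\varepsilon_{\ell,n}$ --- of $f_{\ell,n}$ and the previously corrected $h_{\ell,n-1}$, the threshold $\varepsilon_{\ell,n}$ being chosen at each level so that the $L^2(W^n)$ error stays below $\|f_{\ell,n}\|_{L^2(W^n)}/\ell$. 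Your partition-of-unity formula $\tilde f_n = \sum_{S}\prod_{i\in S}\phi(z_i)\prod_{i\notin S}\psi(z_i)\,\bar f_{|S|}$ compresses that whole induction into one closed expression that satisfies every compatibility relation at once and is \emph{manifestly symmetric} in its arguments --- a real advantage, since the paper's one-variable interpolation treats the last coordinate specially and does not obviously produce symmetric form functions. The price you pay is the preliminary truncation to finitely many nonzero $\bar f_n$, which forces you to control the extraneous $\tilde f_n$ for $n>N$ created by compatibility; your observation that each such $\tilde f_n$ carries at least $n-N$ factors supported in $W_\epsilon$ does, after weighting by $\theta^n/n!$ and summing over $n>N$, give a bound of order $\bigl(\sum_{j\le N}(\theta|W|)^j/j!\bigr)(e^{\theta|W_\epsilon|}-1)$, which vanishes with $\epsilon$, so the bookkeeping you sketch is sound. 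In short, both proofs are valid; yours trades the paper's per-level choice of $\varepsilon_{\ell,n}$ for the tail estimate, and gains a cleaner, visibly symmetric construction.
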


}

\section{Integration by parts formula}
\label{sec:ipp}

\subsection{Duality formula} \label{sec:duality}

In this section, we aim at extending the Malliavin derivative $D^\pi$ to a
larger class of Poisson functionals  by using
density arguments. We also prove an integration by parts formula for the
Malliavin derivative, involving the extension $\overline{D}^\pi$ of the
operator $D^{\pi}$ and its adjoint. We
start with basic definitions of closable operators --- i.e. operators which
can be extended by density --- and of the adjoint of a densely defined
operator.

\begin{definition}
Let $H_1,H_2$ be two Hilbert spaces and $T$ be a linear operator defined from
$\dom(T)\subset H_1\rightarrow H_2$. The operator $T$ is said to be closable if
and only if for any sequence $(f_\ell)\subset \dom(T)$ such that
$f_\ell\rightarrow 0$ (in $H_1$) and $Tf_\ell\rightarrow g$ (in $H_2$) then
$g=0$.
\end{definition}
The main point is that any closable operator $T$ from $\dom(T)\subset H_1\rightarrow H_2$ can be extended by density. Set
$\dom(\overline{T})=\big\{f\in H_1,\,\exists (f_\ell)\in \dom(T),\,f_\ell\to f\mbox{ in }H_1\mbox{ and }(T f_\ell) $ converges in $H_2\big\}$
and define for any $f=\lim f_\ell\in \dom(\overline{T})$ with $(f_\ell)\in \dom(T)$, $
\overline{T}f=\lim_{\ell\to\infty} Tf_\ell$.
Then, the operator $\overline{T}$ is called the {closure} of $T$. By the closability of $T$, the above limit does not depend on the chosen sequence $(f_\ell)$.

If we are given a linear operator $T$ such that $\overline{\dom(T)}=H_1$, an
element $g$ of $H_2$ is said to belong to $\dom(T^*)$, where $T^*$ is the
adjoint of $T$, if for any $f\in \dom(T)$, there exists some $h\in H_1$ such
that $<Tf,g>_{H_2}=<f,h>_{H_1}$. When such an $h$ exists, the assumption
$\overline{\dom(T)}=H_1$ ensures it is unique and given by $h=T^*g$. The following
{duality relation} between $T$ and $T^*$ holds:
\begin{equation}\label{e:duality}
<Tf,g>_{H_2}=<f,T^*g>_{H_1}\,\forall (f,g)\in \dom(T)\times \dom(T^*).
\end{equation}
In the case of the Malliavin derivative, this duality relation leads to an integration by parts formula.

\begin{theorem}[Duality relation]  \label{thm:ipp}
The operator $D^\pi$ is closable and admits a
closable adjoint $\delta^\pi$ from $L^2(\Omega,L^2(W,\R^d))$ into $L^2(\Omega)$
and the following duality relation holds:
\begin{equation}\label{eq:ipp}
\E \left[ \int_{W} D_x^\pi F \cdot V(x)\dd x  \right] = \E \left[ F
  \delta^\pi(V)\right], \; \forall F\in \dom(D^\pi),\, \forall V\in \dom(\delta^\pi).
\end{equation}
\com{In particular, \eqref{eq:ipp} extends to the case $F \in \dom(\overline{D^\pi})$, $V \in \dom(\overline{\delta}^\pi)$.  Let $V\in L^\infty(W,\R)$, we define  $\mathcal V^\pi: W \rightarrow \R^d$ by
\begin{equation}\label{eq:Vtilde}
  \mathcal V^\pi(u)=\int_{W} V(x)\pi(u,x)\dd x
\end{equation}
which is an element of $\mathcal{C}^1(W,\R^d)$.} We have the following explicit expression for $\delta^\pi$:
\begin{equation}\label{eq:delta:pi}
\delta^\pi(V)= \sum_{u\in \bX_W} \nabla\cdot \com{\mathcal V^\pi}(u)-\theta \int_W \nabla\cdot \com{\mathcal V^\pi(u)} \dd u.
\end{equation}
\end{theorem}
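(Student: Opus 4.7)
The plan is to first establish the explicit formula \eqref{eq:delta:pi} by a direct computation on a sufficiently rich test class, and then invoke abstract Hilbert space theory for the closability statements. The guiding principle is the classical fact that a densely defined operator is closable iff its adjoint is densely defined, and that any adjoint of a densely defined operator is automatically closed. Since Lemma~\ref{lem:density-dom} already provides density of $\dom(D^\pi)$ in $L^2(\Omega)$, it suffices to (a) prove the duality relation \eqref{eq:ipp} with $\delta^\pi$ given by \eqref{eq:delta:pi} for $F\in\dom(D^\pi)$ and $V$ ranging over a dense subclass of $L^2(\Omega,L^2(W,\R))$, and (b) verify that the right-hand side of \eqref{eq:delta:pi} does lie in $L^2(\Omega)$, so that $V$ belongs to $\dom(\delta^\pi)$.

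For the main computation, I would fix $F\in\dom(D^\pi)$ with form functions $(f_n)$ and a deterministic $V\in\mathcal{C}^1(W,\R)\cap L^\infty(W,\R)$, so that $\mathcal{V}^\pi$ is $\mathcal{C}^1(W,\R^d)$ with bounded divergence. Plugging \eqref{eq:def1} into the left-hand side of \eqref{eq:ipp}, applying \eqref{eq:EF}, swapping $\int_W\dd x$ with $\int_{W^n}\dd z$ by Fubini (justified by the $\cS^\prime$-bounds and boundedness of $\pi,V$), and using \eqref{eq:Vtilde} gives
\[
\E\!\left[\int_W D_x^\pi F\cdot V(x)\,\dd x\right] = -e^{-\theta|W|}\sum_{n\ge 1}\frac{\theta^n}{n!}\int_{W^n}\sum_{i=1}^n (\nabla_{z_i}f_n)(z_1,\dots,z_n)\cdot\mathcal{V}^\pi(z_i)\,\dd z_1\cdots\dd z_n.
\]
Symmetry of $f_n$ reduces the inner sum to $n$ copies of the $i=n$ term, after which the divergence theorem applied to the $z_n$-integration produces an interior contribution $-\int_{W^n}f_n\,\nabla\!\cdot\!\mathcal{V}^\pi(z_n)\,\dd z$ together with a surface term $\int_{\partial W}f_n(z_1,\dots,z_n)\,\mathcal{V}^\pi(z_n)\!\cdot\!\nu\,\dd\sigma(z_n)$. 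The compatibility condition \eqref{eq:compatibility}, $f_n|_{z_n\in\partial W}=f_{n-1}(z_1,\dots,z_{n-1})$, followed by a second use of the divergence theorem on $\mathcal V^\pi$, rewrites this surface term as $f_{n-1}(z_1,\dots,z_{n-1})\int_W\nabla\!\cdot\!\mathcal{V}^\pi(u)\,\dd u$. Re-indexing $n\mapsto n+1$ in the boundary series and matching both resulting series against \eqref{eq:EF} identifies them as $\E[F\sum_{u\in\bX_W}\nabla\!\cdot\!\mathcal{V}^\pi(u)]$ and $-\theta\E[F]\int_W\nabla\!\cdot\!\mathcal{V}^\pi(u)\,\dd u$, yielding \eqref{eq:delta:pi}.

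With the formula in hand, closability follows: the right-hand side of \eqref{eq:delta:pi} is in $L^2(\Omega)$ for any such $V$, since $\nabla\!\cdot\!\mathcal{V}^\pi$ is bounded and $\sum_{u\in\bX_W}\nabla\!\cdot\!\mathcal{V}^\pi(u)-\theta\int_W\nabla\!\cdot\!\mathcal{V}^\pi$ is a centered compensated Poisson sum with finite second moment. The test class $\mathcal{C}^1(W,\R)\cap L^\infty(W,\R)$ is dense in $L^2(\Omega,L^2(W,\R))$, so $\delta^\pi$ is densely defined; by the abstract criterion, $D^\pi$ is closable. Finally, $\delta^\pi$ is itself an adjoint of a densely defined operator and is therefore closed — hence trivially closable — so the extension \eqref{eq:ipp} to $\dom(\overline{D}^\pi)\times\dom(\overline{\delta}^\pi)$ follows by standard density arguments. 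I expect the main difficulty to lie entirely in the boundary-term analysis: the $\mathcal C^2$ regularity of $\partial W$ is what legitimises the divergence theorem, and the compatibility conditions \eqref{eq:compatibility} are precisely what make the surface integrals telescope into a single clean divergence. Without either ingredient, uncontrolled $\partial W$-contributions would spoil the clean expression \eqref{eq:delta:pi}.
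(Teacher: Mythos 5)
Your Step~1 computation (plugging \eqref{eq:def1} into \eqref{eq:EF}, Fubini, symmetry, divergence theorem, compatibility conditions, re-index) reproduces the paper's Step~1 faithfully and is correct; it delivers both \eqref{eq:ipp} for deterministic $V$ and the explicit formula \eqref{eq:delta:pi}. The abstract criterion you cite --- a densely defined operator is closable iff its adjoint is densely defined, and adjoints are automatically closed --- is also the one the paper uses (Theorem~\ref{th:adjoint2}).

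The gap is in the density claim that feeds that criterion. You assert that deterministic vector fields $V\in\mathcal{C}^1(W,\R)\cap L^\infty(W,\R)$ are dense in $L^2(\Omega,L^2(W,\R))$. They are not: $L^2(\Omega,L^2(W,\R))$ is the Bochner space of $L^2$ \emph{random} fields, i.e. $\overline{L^2(\Omega)\otimes L^2(W,\R)}$, and deterministic $V$ span only the closed proper subspace $\{\text{constants}\}\otimes L^2(W,\R)$. So Step~1 alone does not show $\dom(\delta^\pi)$ is dense in the correct codomain $L^2(\Omega,L^2(W,\R^d))$, and the abstract criterion cannot yet be invoked. This is precisely what the paper's Step~2 fixes: it \emph{extends} $\delta^\pi$ to tensors $GV$ with $G\in\dom(D^\pi)$ random and $V\in L^\infty(W,\R^d)$ deterministic, by setting $\delta^\pi(GV)=G\,\delta^\pi(V)-\int_W D_x^\pi G\cdot V(x)\,\dd x$, and verifies the duality relation on such products via the product rule of Lemma~\ref{lem:productgf}. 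The linear span $\dom(D^\pi)\otimes L^\infty(W,\R^d)$ \emph{is} dense in $L^2(\Omega,L^2(W,\R^d))$ (Lemma~\ref{lem:density-dom} plus density of $L^\infty$ in $L^2(W,\R^d)$), and only then does the closability argument go through (paper's Step~3). Without this tensor-product extension your proof cannot conclude that $D^\pi$ is closable; everything else is sound.

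One minor remark: your restriction to $V\in\mathcal{C}^1(W,\R)$ is superfluous --- the paper takes $V\in L^\infty(W,\R^d)$ and lets the regularity of $\pi$ transfer $\mathcal{C}^1$ smoothness to $\mathcal{V}^\pi$. Insisting on $\mathcal{C}^1(W,\R)$ for $V$ shrinks your test class with no benefit, though it is not an error per se.
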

\noindent The proof of this result shares some similarities with \citet[Proposition 4.1]{privault:torrisi:11} and relies on the same tool, namely the standard trace Theorem \com{(see e.g. \citet{evans:gariepy:91})}, recalled
hereafter.
\begin{theorem}
  \label{th:trace}
  Let $B$ be a bounded subset of $\R^d$ with Lipschitz boundary and closure $W=\overline{B}$. Let $\mathcal{V} =(\mathcal{V}_1,\dots,\mathcal{V}_d)^T\in \mathcal{C}^1(R^d,\R^d)$ be a vector field and $g \in \mathcal{C}^1(\R^d,\R)$ be a real--valued function. Then,
  \begin{equation}\label{e:IPP1}
    \int_{W} (\nabla g)(x) \cdot \mathcal{V}(x) \dd x = -
    \int_{W} g(x) (\nabla\cdot \mathcal{V})(x) \dd x + \int_{\partial
      B} g(x) \mathcal{V}(x) \cdot \nu(\dd x)
  \end{equation}
  where $\nu$ stands for the outer normal to $\partial B$. When $g\equiv 1$, we get
  \begin{equation}\label{e:IPP2}
    \int_W \nabla\cdot \mathcal{V}(x) \dd x =
    \int_{\partial W}  \mathcal{V}(x) \cdot \nu(\dd x).
  \end{equation}
\end{theorem}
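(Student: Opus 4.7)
The plan is to deduce the general identity \eqref{e:IPP1} from the pure divergence identity \eqref{e:IPP2}, and to obtain \eqref{e:IPP2} itself from the classical Gauss--Green theorem on Lipschitz domains. For the reduction, the key algebraic step is the product rule for the divergence of the vector field $g\mathcal{V}$,
\[
  \nabla\cdot(g\mathcal{V})(x) \,=\, g(x)\,(\nabla\cdot\mathcal{V})(x) \,+\, (\nabla g)(x)\cdot\mathcal{V}(x),
\]
which is valid pointwise because both $g$ and $\mathcal{V}$ are $\mathcal{C}^1$ on $\R^d$. Granting \eqref{e:IPP2} applied to the $\mathcal{C}^1$ field $g\mathcal{V}$ and integrating this product rule over $W$ yields \eqref{e:IPP1} after isolating the term $\int_W(\nabla g)\cdot\mathcal{V}\,\dd x$.

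It remains to establish \eqref{e:IPP2} for a bounded $B\subset\R^d$ with Lipschitz boundary and a $\mathcal{C}^1$ vector field $\mathcal{V}$. First I would cover $\partial B$ by finitely many open sets $U_k$ on which $\partial B$ is, after a rigid motion, the graph of a Lipschitz function $\gamma_k$, together with one interior neighborhood $U_0 \Subset B$. Introducing a smooth partition of unity $\{\varphi_k\}$ subordinate to $\{U_k\}$, writing $\mathcal{V}=\sum_k\varphi_k\mathcal{V}$, and using linearity of both sides reduces the identity to two model cases: a piece of $\mathcal{V}$ compactly supported in $B$, and a piece supported in a single boundary chart.

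The interior piece is immediate from Fubini and the fundamental theorem of calculus in each coordinate direction and produces no boundary contribution. For a boundary chart I would, after a rigid motion, assume locally $B=\{x_d<\gamma(x')\}$ with $\gamma$ Lipschitz. Integrating $\partial_{x_d}\mathcal{V}_d$ in $x_d$ up to $\gamma(x')$, and the tangential components $\partial_{x_i}\mathcal{V}_i$ for $i<d$ by Fubini, then reassembling, produces precisely the surface integral $\int_{\partial B}\mathcal{V}\cdot\nu(\dd x)$ via the graph parametrization in which $\nu(\dd x) = (-\nabla'\gamma(x'),\,1)^\top\,\dd x'$.

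The main technical obstacle is that for a merely Lipschitz $\gamma$, the gradient $\nabla'\gamma$ exists only almost everywhere (Rademacher's theorem), so the graph computation must be justified by approximation. The standard remedy I would follow is to mollify $\gamma$ into smooth $\gamma_\varepsilon$ with uniformly bounded Lipschitz constant, prove the identity for each $\gamma_\varepsilon$ by direct integration by parts, and pass to the limit via dominated convergence using the uniform bound on $\sqrt{1+|\nabla'\gamma_\varepsilon|^2}$ together with the boundedness of $\mathcal{V}$ and $\nabla\cdot\mathcal{V}$ on compact sets. Since this is exactly the route carried out in \citet{evans:gariepy:91}, in practice the proof would simply invoke their statement rather than redo the approximation argument.
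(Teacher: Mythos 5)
Theorem~\ref{th:trace} is not proved in the paper at all: it is recalled as a standard classical result (the Gauss--Green/trace theorem on Lipschitz domains) with only a citation to Evans and Gariepy, so there is no in-paper argument to compare yours against, and your outline --- the product-rule reduction of \eqref{e:IPP1} to \eqref{e:IPP2} for the field $g\mathcal{V}$, followed by a partition of unity, local Lipschitz graph charts, and mollification of the graph function to justify the boundary term --- is the standard, correct proof of that classical result, ending by invoking the very same reference. The one point worth making explicit is that your logical order reverses the theorem's phrasing (you establish \eqref{e:IPP2} independently via the chart computation and then deduce \eqref{e:IPP1}, rather than obtaining \eqref{e:IPP2} as the $g\equiv 1$ specialization), which is perfectly legitimate and involves no circularity.
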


\begin{proof}[Proof of Theorem~\ref{thm:ipp}]
\noindent{\it Step 1: weak duality relation}. Assume that
\com{$F\in \dom(D^\pi)$} and $V\in L^\infty(W,\R^d)$. Let us prove
that~\eqref{eq:ipp} holds. Using standard results on Poisson processes, which are
in particular justified by the fact that \com{$F\in \dom(D^\pi)$}, we get
\begin{align*}
&\E \left[ \int_{W} D_x^\pi F \cdot V(x)\dd x  \right]\\
&= - e^{-\theta |W|} \sum_{n\geq 1} \frac{\theta^n}{n!} \sum_{i=1}^n \int_{W^n} \left(\int_{W} \nabla_{z_i}f_n(z_1,\dots,z_n) \cdot V(x) \; \pi(z_i,x)\dd x	\right) \dd z_1\dots \dd z_n \\
&=- e^{-\theta |W|} \sum_{n\geq 1} \frac{\theta^n}{n!} \sum_{i=1}^n \int_{W^{n-1}} \dd z_1 \dots \dd z_{i-1} \dd z_{i+1} \dots \dd z_n \int_W
\nabla_{z_i} f_n(z_1,\dots,z_n) \cdot \com{\mathcal V^\pi}(z_i) \dd z_i.
\end{align*} 	
Since $\pi,V$ are both bounded on $W$ and since for a.e. $x\in W$, $z\mapsto \pi(z,x)$ belongs to $\mathcal{C}^1(W,\R)$, then $\com{\mathcal{V}^\pi}\in \mathcal{C}^1(W,\R)$. Hence, we can apply Theorem~\ref{th:trace}. Using the compatibility conditions~\eqref{eq:compatibility}, we deduce that for $i=1,\dots,n$
\begin{align*}
 -\int_W \nabla_{z_i} &f_n(z_1,\dots,z_n) \cdot \com{\mathcal V^\pi}(z_i) \dd z_i	\\
&=  \int_W f_n(z_1,\dots,z_n) \nabla\cdot \com{\mathcal V^\pi}(z_i) \dd z_i
- \int_{\partial W} f_n(z_1,\dots,z_n) \com{\mathcal V^\pi}(z_i) \dd \Vect \nu_{z_i} \\
&=\int_W f_n(z_1,\dots,z_n) \nabla\cdot \com{\mathcal V^\pi}(z_i) \dd z_i	
-  f_{n-1}(z_1,\dots,z_{n-1}) \int_{ W} \nabla\cdot \com{\mathcal V^\pi}(u) \dd u.
\end{align*}
The last equation comes from~\eqref{e:IPP2} and the symmetry of the functions $f_n$. Therefore,
\begin{align*}
  \E \bigg[ \int_{W}& D_x^\pi  F \cdot V(x)\dd x  \bigg]\\
  =& e^{-\theta|W|} \sum_{n\geq 1} \frac{\theta^n}{n!} \times \sum_{i=1}^n \int_{W^n}f_n(z_1,\dots,z_n)\nabla\cdot \com{\mathcal V^\pi}(z_i)\dd z_1\dots \dd z_n \\
  &- e^{-\theta|W|} \sum_{n\geq 1} \frac{\theta^n}{n!} \times n \int_{W^{n-1}}f_{n-1}(z_1,\dots,z_{n-1}) \bigg(\int_W \nabla\cdot \com{\mathcal V^\pi}(u)\dd u\bigg) \dd z_1\dots \dd z_{n-1} \\
  =& \E[ F \sum_{u\in \bX_W} \nabla\cdot \com{\mathcal V^\pi}(u)]\\
  &- \theta e^{-\theta|W|}
  \bigg(\int_{W}\nabla\cdot \com{\mathcal V^\pi}(u)\dd u\bigg) \sum_{n\geq 1}
  \frac{\theta^{n-1}}{(n-1)!} \int_{W^{n-1}}\!\!\!\!f_{n-1}(z_1,\dots,z_{n-1})\dd
  z_1\dots \dd z_{n-1}.
\end{align*}
The last equality ensues from the invariance of the functions $f_n$ and the
stability of the domain $W^{n-1}$ by exchanging the coordinates. Then, we
deduce the result.\\

\noindent{\it Step 2: Extension of $\delta^\pi$ on a dense subset of
  $L^2(\Omega,L^2(W,\R^d))$. Validity of~\eqref{eq:ipp} on this dense subset.} Remember that $L^2(\Omega,L^2(W,\R^d))=\overline{L^2(\Omega)\otimes L^2(W,\R^d)}$.
Since by Lemma~\ref{lem:density-dom} $\dom(D^\pi)$ is a dense subset of $L^2(\Omega)$ and $L^\infty(W,\R^d)$ is a dense subset of $L^2(W,\R^d)$, we deduce that
$L^2(\Omega,L^2(W,\R^d))=\overline{\dom(D^\pi)\otimes L^\infty(W,\R^d)}$. Now, we
extend the operator $\delta^\pi$ on $\dom(D^\pi)\otimes L^\infty(W,\R^d)$ and then
prove~\eqref{eq:ipp} on this dense subset of $L^2(\Omega,L^2(W,\R^d))$. To this
end, we consider $G\in\dom(D^\pi)$, $V\in L^\infty(W,\R^d)$ and set
\[
\delta^\pi(GV)=G\delta^\pi(V)-\int_{\R^d}D_x^\pi G \cdot V(x)\dd x.
\]
Using the product rule, which is valid for any $F,G\in\dom(D^\pi)$, we deduce that
\begin{align*}
\E \left[ G\int_{W} D_x^\pi F \cdot V(x)\dd x  \right]=&\E \left[ \int_{W} D_x^\pi (FG) \cdot V(x)\dd x  -F\int_{W} D_x^\pi G \cdot V(x)\dd x  \right]\\
=&\E \left[FG\delta^\pi(V)-F\int_{W} D_x^\pi G \cdot V(x)\dd x  \right]\\
=&\E \left[F\delta^\pi(GV)\right].
\end{align*}
The second equality comes from the duality relation~\eqref{eq:ipp} applied to
$FG$ as an element of $\dom(D^\pi)$ and $V$ as an element of
$L^\infty(W,\R^d)$, whereas the last equality comes from the
definition of our extension of $\delta^\pi$ to $\dom(D^\pi)\otimes
L^\infty(W,\R^d)$.\\

\noindent{\it Step 3: closability of the operator $D^\pi$ and extension of~\eqref{eq:ipp} to $L^2(\Omega,L^2(W,\R^d))$}.
We extend~\eqref{eq:ipp} from $\dom(D^\pi)\otimes L^\infty(W,\R^d)$ to
$L^2(\Omega,L^2(W,\R^d))$ by proving that the operator $D^\pi$ is closable. Since $\dom(D^\pi)\otimes L^\infty(W,\R^d)$ is dense in $L^2(\Omega,L^2(W,\R^d))$, Theorem~\ref{th:adjoint2} justifies the extension of the duality relation~\eqref{eq:ipp} to $\dom(\overline{D}^\pi)\times \dom(\overline{\delta}^\pi)$ as stated in Theorem~\ref{thm:ipp}.

To prove that $D^\pi$ is closable, we  consider a sequence of elements $(F_\ell)\in \dom(D^\pi)$
such that $F_\ell\rightarrow 0$ in $L^2(\Omega)$. Assume also that $D^\pi
F_\ell\rightarrow U$ for some $U$ in \linebreak$L^2(\Omega,L^2(W,\R^d))$. We need to prove that
$U=0$, which is done using the following computations for any $G\in\dom(D^\pi)$ and any $V\in
L^\infty(W,\R^d)$
\begin{align*}
\bigg|\E\bigg[&\int_{W}U(x)GV(x)\dd x\bigg]\bigg|\\
&\leq \left|\E\left[F_\ell\delta^\pi(GV)\right]-\E\left[\int_{\R^d}U(x)GV(x)\dd x\right]\right|+\left|\E\left[F_\ell\delta^\pi(GV)\right]\right|\\
&\leq \left|\E\left[G\int_{W}D^\pi_x F_\ell \cdot V(x)\dd x\right]-\E\left[G\int_{W}U(x)V(x)\dd x\right]\right|+\left|\E\left[F_\ell\delta^\pi(GV)\right]\right|\\
&\leq \|G\|_{L^2(\Omega)}\big\|\int_{W}(D^\pi_x F_\ell-U(x)) \cdot V(x)\dd x\big\|_{L^2(\Omega)}+\|F_\ell\|_{L^2(\Omega)}\|\delta^\pi(GV)\|_{L^2(\Omega)}\\
&\leq \|G\|_{L^2(\Omega)}\|V\|_{L^2(W,\R^d)}\|D^\pi_x F_\ell-U(x)\|_{L^2(\Omega,L^2(W,\R^d))}+\|F_\ell\|_{L^2(\Omega)}\|\delta^\pi(GV)\|_{L^2(\Omega)}.
\end{align*}
The conclusion ensues from Theorem 7 of Chapter~2
of~\cite{birman:solomjak:87}, which is rephrased in Theorem~\ref{th:adjoint2} for the sake of completeness. Equation~\eqref{eq:ipp} is recovered by applying Theorem~\ref{th:adjoint2}  with $T^*=\delta^\pi$, $T=D^\pi$, $H_1=L^2(\Omega,L^2(W,\R))$ and $H_2=L^2(\Omega)$.
\end{proof}

\begin{theorem}\label{th:adjoint2}
Let $H_1,H_2$ be two Hilbert spaces and $T$ be a linear operator defined from $\dom(T)\subset H_1\rightarrow H_2$. Assume that $\overline{\dom(T)}=H_1$.  Then, $\overline{\dom(T^*)}=H_2$ if and only if $T$ is closable. In this case, $T^{**}$ exists \com{and}  coincides with $\overline{T}$. Then, (\ref{e:duality}) can be extended as follows:
$<f,T^*g>_{H_1}=<\overline{T}f,g>_{H_2}$, $\forall (f,g)\in \dom(\overline{T})\times\dom(T^*)$.
\end{theorem}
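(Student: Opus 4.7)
The plan is to work in the product Hilbert space $H_1\times H_2$ equipped with the inner product $\langle(f,g),(f',g')\rangle=\langle f,f'\rangle_{H_1}+\langle g,g'\rangle_{H_2}$, and to exploit the graph of $T$,
\[
G(T)=\{(f,Tf):f\in\dom(T)\}\subset H_1\times H_2.
\]
The first step is the computational observation that $(f_0,g_0)\perp G(T)$ exactly when $\langle f_0,f\rangle_{H_1}+\langle g_0,Tf\rangle_{H_2}=0$ for every $f\in\dom(T)$, which is the defining identity for $g_0\in\dom(T^*)$ with $T^*g_0=-f_0$. Hence
\[
G(T)^\perp=\{(-T^*g,g):g\in\dom(T^*)\}.
\]
This dictionary between the graph of $T$ and the domain of $T^*$ drives the entire proof.

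For the easy direction, suppose $\overline{\dom(T^*)}=H_2$ and take $(f_\ell)\subset\dom(T)$ with $f_\ell\to0$ in $H_1$ and $Tf_\ell\to g$ in $H_2$. For any $h\in\dom(T^*)$, the duality relation~\eqref{e:duality} gives $\langle g,h\rangle_{H_2}=\lim_\ell\langle Tf_\ell,h\rangle_{H_2}=\lim_\ell\langle f_\ell,T^*h\rangle_{H_1}=0$, and density of $\dom(T^*)$ forces $g=0$, which is the closability of $T$. For the converse, I would argue by contraposition: assume $\overline{\dom(T^*)}\neq H_2$ and pick $g_0\in H_2\setminus\{0\}$ orthogonal to $\dom(T^*)$. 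By the formula for $G(T)^\perp$ above, $(0,g_0)\perp G(T)^\perp$, so $(0,g_0)\in(G(T)^\perp)^\perp=\overline{G(T)}$. This produces a sequence $(f_\ell)\subset\dom(T)$ with $f_\ell\to 0$ and $Tf_\ell\to g_0\neq 0$, violating closability.

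For the last claim, once $T^*$ is densely defined the adjoint $T^{**}$ makes sense, and $T^*$ is automatically closed because $G(T^*)$ is (up to the isometry $(g,h)\mapsto(-h,g)$ from $H_2\times H_1$ to $H_1\times H_2$) an orthogonal complement. Applying the dictionary twice, first to $T$ and then to $T^*$, yields $G(T^{**})=\overline{G(T)}$, and comparing with the definition of the closure gives $T^{**}=\overline{T}$; the extended duality $\langle\overline{T}f,g\rangle_{H_2}=\langle f,T^*g\rangle_{H_1}$ on $\dom(\overline{T})\times\dom(T^*)$ then follows by passing to the limit in \eqref{e:duality} along a sequence $(f_\ell)\subset\dom(T)$ realising $f\in\dom(\overline{T})$. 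The main obstacle is the converse implication: one must produce the contradicting sequence from a vector $g_0\perp\dom(T^*)$, and the cleanest route is the double-orthogonal-complement identity applied to the graph, which also packages the $T^{**}=\overline{T}$ statement at essentially no extra cost.
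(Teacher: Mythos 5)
Your proof is correct and is the standard graph-based argument for this classical result. The paper does not actually prove Theorem~\ref{th:adjoint2}; it invokes it by citing Theorem~7 of Chapter~2 of \citet{birman:solomjak:87} "for the sake of completeness," so there is no in-paper proof to compare against. Your argument --- identifying $G(T)^\perp$ with $\{(-T^*g,g):g\in\dom(T^*)\}$, getting the easy direction from density, the hard direction from $(G(T)^\perp)^\perp=\overline{G(T)}$, and $T^{**}=\overline{T}$ by applying the dictionary twice --- is precisely the textbook proof of this fact (it is the one in Birman--Solomjak, Reed--Simon, Rudin, etc.), and every step checks out: the orthogonality computation correctly recovers the definition of $\dom(T^*)$ as given in the paper, the contraposition argument correctly uses the double-complement identity to manufacture the violating sequence, and the isometry $(g,h)\mapsto(-h,g)$ does carry $G(T^*)$ onto the closed set $G(T)^\perp$, giving closedness of $T^*$ and hence $G(T^{**})=\overline{G(T)}=G(\overline{T})$. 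The final limit-passage for the extended duality is routine. One minor remark: since the paper works with real-valued form functions and real Hilbert spaces, the conjugate symmetry of the inner product degenerates to plain symmetry, so your identification $\langle g_0,Tf\rangle = -\langle f_0,f\rangle \Leftrightarrow T^*g_0=-f_0$ goes through without a complex-conjugate adjustment; if one wanted the complex case one would write $T^*g_0=-f_0$ with $\langle f,h\rangle_{H_1}$ meaning the conjugate-linear-in-the-second-slot convention consistently.
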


For any $F \in \dom(D^\pi)$ and any $V \in L^2(\Omega,L^2(W,\R^d))$, we define the operator $\nabla^{\pi,V}: \dom(D^\pi) \to \R$ by
\begin{equation}\label{e:nabla:simplified}
\nabla^{\pi,V} F= \int_{W} D_x^\pi F \cdot V(x) \dd x=-\sum_{n\geq 1}\1(N(W)=n)\sum_{i=1}^n \nabla_{x_i}f_n(X_1,\dots,X_n)\cdot \com{\mathcal{V}^\pi}(X_i).
\end{equation}
Note that the closability of $D^\pi$ implies the one of  the operator
$\nabla^{\pi,V}$.
\com{For the reader's convenience, we will use the same notation to denote the operators $\nabla^{\pi,V}$, $\delta^\pi$ and their closures.}

\subsection{A specific choice for $\pi$ and $V$}

In this section, we focus on a specific choice of functions $\pi$ and $V$ leading to a specific definition of the gradient of a Poisson functional. This choice is guided by two important remarks. First, Lemma~\ref{lem:specific:case} will underline that the key-ingredient to derive a superefficient estimator is to define a gradient $\nabla^{\pi,V}$ such that
\begin{equation}\label{eq:ipp2}
\E [\nabla^{\pi,V} F] = \E \left[ F(N(W) -\theta |W|)\right].
\end{equation}
From \eqref{e:nabla:simplified} and Theorem~\ref{thm:ipp}, this is achieved if $\nabla\cdot \com{\mathcal V^\pi} \equiv 1$. Second, to agree with the isotropic nature of the homogeneous Poisson point process, it is natural to define a Stein estimator being also isotropic. As pointed out by Proposition~\ref{prop:MSEStein}, this can be achieved by defining a Malliavin derivative which transforms an isotropic Poisson functional into an isotropic Poisson functional at any point. We do not want to go into these details right now but Lemma~\ref{lem:deriv:closed} suggests to require that both $\pi$ and $V$ be isotropic. Now, we \com{detail} a pair of functions $(\pi,V)$ satisfying the above requirements.

\begin{proposition}\label{lem:specific:case}
  \com{Let $(V_m)_{1 \le m \le d}$ be an orthonormal family of bounded functions of $L^2(W, \R)$.}
For any $x,y \in \R^d$, set $V(x)=d^{-1/2}(V_1(x),\dots,V_d(x))^\top$ and $\pi(y,x)=y^\top V(x)$.
Then, $\com{\mathcal V^\pi}(y)={y}/{d}$, which implies that $\nabla \cdot\com{\mathcal V^\pi}\equiv 1$.
With the above choices, we simply denote $\com{ \mathcal V=\mathcal V^\pi}$, $D=D^\pi$, $\nabla=\nabla^{\pi,V}$. From~\eqref{e:nabla:simplified}, we deduce that
\begin{equation} \label{e:nabla}
\nabla F = -\frac{1}{d}\sum_{n\geq 1} \1(N(W)=n) \sum_{i=1}^n \nabla_{x_i} f_n(X_1,\dots,X_n) \cdot X_i.
\end{equation}
Finally, for any $F\in\dom(\overline{D})$, \eqref{eq:ipp2} holds.
\end{proposition}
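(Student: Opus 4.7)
The plan is to verify each of the four claims in succession, with the crucial computation being the evaluation of $\mathcal{V}^\pi$ using the orthonormality of the family $(V_m)_{1 \le m \le d}$, after which everything follows by direct substitution into previously established formulas.

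First I would compute $\mathcal{V}^\pi(y)$ componentwise. Writing $V_i(x)$ for the $i$-th coordinate of $V(x) = d^{-1/2}(V_1(x),\ldots,V_d(x))^\top$, we have $\pi(y,x) = y^\top V(x) = d^{-1/2}\sum_{j=1}^d y_j V_j(x)$, so by \eqref{eq:Vtilde}
\[
\mathcal{V}^\pi_i(y) = \int_W \frac{V_i(x)}{\sqrt{d}} \cdot \frac{1}{\sqrt{d}}\sum_{j=1}^d y_j V_j(x) \, \dd x
= \frac{1}{d}\sum_{j=1}^d y_j \int_W V_i(x) V_j(x) \, \dd x = \frac{y_i}{d},
\]
by orthonormality of $(V_m)$ in $L^2(W,\R)$. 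Hence $\mathcal{V}^\pi(y)=y/d$, and since $\nabla \cdot (y/d) = \sum_{i=1}^d \partial(y_i/d)/\partial y_i = 1$, we obtain $\nabla \cdot \mathcal{V}^\pi \equiv 1$. Note that $\mathcal{V}^\pi$ is indeed $\mathcal{C}^1$ on $W$, as required by the framework, since each $V_m$ is bounded.

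Next, I would obtain the explicit expression \eqref{e:nabla} by plugging $\mathcal{V}^\pi(X_i)=X_i/d$ directly into \eqref{e:nabla:simplified}, which gives
\[
\nabla F = -\sum_{n\geq 1}\1(N(W)=n)\sum_{i=1}^n \nabla_{x_i}f_n(X_1,\dots,X_n)\cdot \frac{X_i}{d} = -\frac{1}{d}\sum_{n\geq 1}\1(N(W)=n)\sum_{i=1}^n \nabla_{x_i}f_n(X_1,\dots,X_n)\cdot X_i.
\]
Then I would invoke Theorem~\ref{thm:ipp}: for $F \in \dom(D^\pi)$ the duality relation $\E[\nabla^{\pi,V}F] = \E[F\,\delta^\pi(V)]$ holds, and \eqref{eq:delta:pi} combined with $\nabla \cdot \mathcal{V}^\pi \equiv 1$ gives
\[
\delta^\pi(V) = \sum_{u \in \bX_W} 1 - \theta \int_W 1 \, \dd u = N(W) - \theta |W|,
\]
which is \eqref{eq:ipp2} on $\dom(D^\pi)$.

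Finally I would extend \eqref{eq:ipp2} to the closure $\dom(\overline{D})$. Since $V$ is bounded (each $V_m$ being bounded in $L^2(W,\R)$ by assumption), $V$ lies in $L^\infty(W,\R^d)$ and can be viewed as the element $1\cdot V$ of $\dom(D^\pi)\otimes L^\infty(W,\R^d)$ (the constant functional $1$ trivially satisfies the compatibility conditions \eqref{eq:compatibility}), so $V \in \dom(\delta^\pi)$ with $\delta^\pi(V) = N(W) - \theta|W|$, an element of $L^2(\Omega)$. The extension of \eqref{eq:ipp} to $\dom(\overline{D}^\pi) \times \dom(\overline{\delta}^\pi)$ asserted in Theorem~\ref{thm:ipp} therefore yields \eqref{eq:ipp2} for every $F \in \dom(\overline{D})$. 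The only subtlety worth checking is this last domain issue, but it is taken care of by the closability framework already set up; the rest is purely a bookkeeping computation driven by the orthonormality hypothesis on $(V_m)$.
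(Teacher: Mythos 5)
Your proof is correct and follows essentially the same route as the paper: compute $\mathcal{V}^\pi(y)=y/d$ by orthonormality, read \eqref{e:nabla} off \eqref{e:nabla:simplified}, then transfer \eqref{eq:ipp2} to $\dom(\overline{D})$ via closability. You are somewhat more explicit than the paper in spelling out $\delta^\pi(V)=N(W)-\theta|W|$ from \eqref{eq:delta:pi} and in checking that $V\in\dom(\delta^\pi)$, but these are just details the paper leaves implicit.
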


\begin{proof}
By definition, we have for any $y=(y_1,\dots,y_d)^\top\in\R^d$ and any $m\in\{1,\dots,d\}$
\begin{align*}
\mathcal V_m(y)&=\int_{\R^d}V_m(x)\pi(y,x)\dd x=d^{-1}\int_W \left(\sum_{m'} y_{m'} V_{m'}(x)\right)V_m(x) \dd x\\
&=d^{-1}\int_W\left(\sum_{m'} y_{m'} V_{m'}(x)V_m(x)\right)\dd x=d^{-1}\int_W y_m V_m^2(x)\dd x=y_m/d.
\end{align*}
Plugging this result in Equation~\eqref{e:nabla:simplified} yields~\eqref{e:nabla}.
Equation~\eqref{eq:ipp2} can be extended to $\dom(\overline D)$ using the closability of
the operator $\nabla$, which follows from the one of
$\nabla^{\pi,V}$ in the general case.
\end{proof}

Again, we want to stress the fact that other choices of pairs of functions
$(\pi,V)$ may lead to~\eqref{eq:ipp2} like  the simple choice
$V(x)=(d|W|)^{-1/2}  (1,\dots,1)^\top \1(x\in W)$ and $\pi(y,x)=y^\top V(x)$. However, the gradient derived from this
choice would not preserve the isotropy of an isotropic Poisson functional
anymore and would lead to technical difficulties especially in the formulation
of~\eqref{e:def:nablaF} in Lemma~\ref{lem:deriv:closed}, which should take into
account the jumps induced by the discontinuity of the form functions.

\subsection{Comparison with alternative versions of the Malliavin derivative} \label{sec:comp:referee:content}
\com{To finish this section, we give some insights into an alternative version of the Malliavin derivative also leading to an integration by parts formula of the form~\eqref{eq:ipp} but with unfortunately more restrictive assumptions on the possible functions $\mathcal{V}$. We refer to~\cite{albeverio:kondratiev:rockner:96,albeverio:kondratiev:rockner:98} and \citet[Section~8]{prat:privault:99} for more details on what follows. We briefly summarize their approach for a Poisson point process lying in $\R^d$. The authors consider the class of smooth cylindrical Poisson functionals of the form
\[
F=f\left(\sum_{u\in \bX_W}\varphi_1(u),\cdots,\sum_{u\in \bX_W}\varphi_p(u)\right)
\]
where $p$ is an integer, $f$ is an infinitely differentiable and bounded function on $W$ and $\varphi_1,\cdots,\varphi_p$ are infinitely differentiable on $W$, all compactly supported with $\mathrm{supp}(\varphi_i)\subset \overset{\circ}{W}$ for any $i=1,\cdots,p$.

Then, for any $u\in W$, the Malliavin derivative of $F$ at $u$ is defined by
\[
\widetilde{D}_u F=\sum_{i=1}^p \partial_i f\left(\sum_{v\in \bX_W}\varphi_1(v),\cdots,\sum_{v\in \bX_W}\varphi_p(v)\right)\nabla \varphi_i(u)
\]
Let $\mathcal{V} : W \to \R^d$ be an infinitely differentiable function  with $\mathrm{supp}(\mathcal{V})\subset \overset{\circ}{W}$. Then, the following formula holds (see  \citet[equation (8.5.6)]{prat:privault:99})
\begin{align}
  \label{eq:ipp-albeverio}
  \E\left[\widetilde{\nabla} F\right]= \E\left[F\left(\sum_{u\in \bX_W} \nabla\cdot \com{\mathcal V}(u)-\theta \int_W \nabla\cdot \com{\mathcal V(u)} \dd u\right)\right]
\end{align}
with $\widetilde{\nabla} F=\sum_{u\in \bX_W}\widetilde{D}_u F\cdot\mathcal{V}(u)$.

The two integration by parts formulae~\eqref{eq:ipp} and~\eqref{eq:ipp-albeverio} look very similar although the gradient operators are completely different. However, the constraint on ${\mathcal V}$ to obtain \eqref{eq:ipp-albeverio} prevents us from taking $\mathcal V$ such that $\nabla\cdot \com{\mathcal V}\equiv 1$ on $W$, which is crucial to get \eqref{eq:ipp2}.}

\section{Stein estimator} \label{sec:stein}

\subsection{Main results}\label{sec:stein:results}

\com{The maximum likelihood estimator of the intensity $\theta$ of the
spatial Poisson point process $\bX$ observed on $W$ is given by $\widehat \theta_{\MLE}=N(W)/|W|$ (see e.g. \citet{moeller:waagepetersen:04}). In this section, we propose a Stein estimator
derived from  the maximum likelihood estimator of the form}
\begin{equation}\label{eq:defEst}
\widehat \theta = \widehat \theta_{\MLE} + \frac1{|W|} \zeta = \frac1{|W|}(N(W)+\zeta)
\end{equation}
where the choice of the {isotropic} Poisson functional $\zeta$ is discussed below. We aim at building an estimator with smaller mean squared error than the maximum likelihood estimator. By applying Proposition~\ref{lem:specific:case}, we can link the mean squared errors of these two estimators.
\begin{lemma}\label{lem:1}
Let $\zeta\in \dom(\overline{D})$. Consider $\widehat \theta$ defined by~\eqref{eq:defEst}. Then,
\begin{equation} \label{eq:MSEStein0}
\MSE(\widehat \theta) = \MSE(\widehat \theta_{\MLE}) + \frac1{|W|^2} \left( \E(\zeta^2)+ 2\E[ \nabla \zeta] \right).
\end{equation}
\end{lemma}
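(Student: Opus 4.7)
The plan is a direct computation using the integration by parts formula from Proposition~\ref{lem:specific:case}. First I would expand the squared error. Writing $\widehat\theta - \theta = (\widehat\theta_{\MLE} - \theta) + \tfrac{1}{|W|}\zeta$ and recalling that $\widehat\theta_{\MLE} - \theta = \tfrac{1}{|W|}(N(W) - \theta|W|)$, I would square and take expectations to obtain
\[
\MSE(\widehat\theta) = \MSE(\widehat\theta_{\MLE}) + \frac{1}{|W|^2}\E[\zeta^2] + \frac{2}{|W|^2}\E\!\left[\zeta\,(N(W) - \theta|W|)\right].
\]
So the entire statement reduces to identifying the cross-term $\E[\zeta\,(N(W) - \theta|W|)]$ with $\E[\nabla \zeta]$.

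For this identification I would simply invoke equation~\eqref{eq:ipp2} from Proposition~\ref{lem:specific:case}, applied to $F = \zeta$, which is valid precisely because we assumed $\zeta \in \dom(\overline{D})$. This yields $\E[\nabla \zeta] = \E[\zeta\,(N(W) - \theta|W|)]$, and substituting back produces~\eqref{eq:MSEStein0}.

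There is essentially no obstacle: the only nontrivial input is the integration by parts formula, which is established in the preceding section, and the decomposition of the MSE is algebraic. The hypothesis $\zeta \in \dom(\overline{D})$ is used only to guarantee that \eqref{eq:ipp2} applies to $\zeta$, and implicitly that $\zeta \in L^2(\Omega)$ so that the cross-term and $\E[\zeta^2]$ are finite. I would make a brief remark noting these integrability issues, but no further argument is needed.
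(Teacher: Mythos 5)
Your proof is exactly the paper's: expand the square, isolate the cross-term $\E[\zeta(N(W)-\theta|W|)]$, and replace it by $\E[\nabla\zeta]$ via \eqref{eq:ipp2} from Proposition~\ref{lem:specific:case}, with $\zeta\in\dom(\overline D)$ invoked for precisely that step. Nothing to add.
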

\begin{proof}
By definition,
\begin{align*}
  \MSE(\widehat \theta) &= \E \left[  \left( \widehat \theta_{\MLE} +
      \frac1{|W|}\zeta-\theta \right)^2  \right] \nonumber \\
  &=\MSE(\widehat \theta_{\MLE}) + \frac1{|W|^2} \left( \E[\zeta^2]+
    2\E[\zeta(N(W)-\theta|W|)] \right).
\end{align*}
Since $\zeta \in \dom(\overline{D})$, we can apply~\eqref{eq:ipp2} with
$F=\zeta$ to deduce~\eqref{eq:MSEStein0}.
\end{proof}
Now, we consider a random variable $\zeta$ written as
$\zeta=\zeta_0\1({N(W)=0}) + \nabla\log(F)$ where $\zeta_0$ is a constant
(possibly zero) and $F$ an almost surely positive Poisson functional
belonging to $\mathcal S$. Both $\zeta_0$ and $F$ are adjusted such that $\zeta
\in \dom(\overline{D})$. Using Lemma~\ref{lem:productgf}, we can follow the initial calculations
of \cite{stein:81}, also used in~\cite{privault:reveillac:09}, to deduce that
$\nabla \log F={\nabla F}/{F}$ and $\nabla\sqrt{F}={\nabla F}/{\com{(2\sqrt{F})}}$.
Using Lemma~\ref{lem:productgf}, we establish the key relations
\begin{equation*}
  \nabla\nabla \log F=
  \frac{F\nabla\nabla
    F-(\nabla F)^2}{F^2} \quad\mbox{ and }\quad
  \nabla\nabla\sqrt{F}=\frac{2\sqrt{F}\nabla\nabla F-(\nabla F)^2/\sqrt{F}}{4F}
\end{equation*}
leading to
\begin{equation}\label{e:key}
  2\nabla\nabla \log F+\left(\frac{\nabla
      F}{F}\right)^2=4\frac{\nabla\nabla\sqrt{F}}{\sqrt{F}}.
\end{equation}
By combining~\eqref{eq:MSEStein0} and~\eqref{e:key}, we obtain the
following result.
\begin{proposition}\label{prop:MSEStein} Let $\zeta_0\in \R$ and $F$ be an almost surely positive Poisson functional such that
  $F$, $\nabla F$  and  $\zeta=\zeta_0 \1(N(W)=0)+ \nabla \log(F) \in \dom(\overline{D})$. Then, the estimator $\widehat\theta$ defined from $\zeta$ by~\eqref{eq:defEst} satisfies
  \begin{equation} \label{eq:MSEStein}
    \MSE(\widehat{\theta})=  \MSE(\widehat\theta_{\MLE}) + \frac1{|W|^2} \left(
      \zeta_0^2 e^{-\theta|W|} + 4\E \left( \frac{\nabla\nabla
          \sqrt{F}}{\sqrt{F}} \right)\right).
  \end{equation}
\end{proposition}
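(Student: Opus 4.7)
The starting point will be Lemma~\ref{lem:1}, which gives
\[
\MSE(\widehat\theta)=\MSE(\widehat\theta_{\MLE})+\frac{1}{|W|^2}\bigl(\E[\zeta^2]+2\E[\nabla\zeta]\bigr),
\]
so the entire proof reduces to expanding $\E[\zeta^2]+2\E[\nabla\zeta]$ and then invoking the algebraic identity~\eqref{e:key}. I will treat the two expectations separately.

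For $\E[\zeta^2]$, I first expand the square
\[
\zeta^2=\zeta_0^2\,\1(N(W)=0)+2\zeta_0\,\1(N(W)=0)\,\nabla\log F+(\nabla\log F)^2.
\]
The key observation is that, by the expression~\eqref{e:nabla} of $\nabla$, the random variable $\nabla F$ is supported on $\{N(W)\geq 1\}$ because the defining sum starts at $n\geq 1$; consequently $\nabla\log F=\nabla F/F$ also vanishes on $\{N(W)=0\}$, which kills the cross term. Using $\P(N(W)=0)=e^{-\theta|W|}$, this yields
\[
\E[\zeta^2]=\zeta_0^2 e^{-\theta|W|}+\E\!\left[\left(\frac{\nabla F}{F}\right)^{\!2}\right].
\]

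For $\E[\nabla\zeta]$, the same observation should imply that the contribution of the constant $\zeta_0\1(N(W)=0)$ disappears under the (closure of the) operator $\nabla$: its form functions are $f_0=\zeta_0$ and $f_n\equiv 0$ for $n\geq 1$, so the sum defining $\nabla$ at~\eqref{e:nabla} vanishes identically. Hence $\nabla\zeta=\nabla\nabla\log F$ and, using the first identity in the display preceding~\eqref{e:key} together with Lemma~\ref{lem:productgf}, we have $\nabla\nabla\log F=(F\nabla\nabla F-(\nabla F)^2)/F^2$. Summing the two expressions and invoking~\eqref{e:key} gives
\[
\E[\zeta^2]+2\E[\nabla\zeta]=\zeta_0^2 e^{-\theta|W|}+\E\!\left[\left(\frac{\nabla F}{F}\right)^{\!2}+2\nabla\nabla\log F\right]=\zeta_0^2 e^{-\theta|W|}+4\E\!\left[\frac{\nabla\nabla\sqrt{F}}{\sqrt{F}}\right],
\]
and plugging this back into Lemma~\ref{lem:1} produces~\eqref{eq:MSEStein}.

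The main obstacle I foresee is the handling of the boundary term $\zeta_0\1(N(W)=0)$: this functional does not satisfy the compatibility condition~\eqref{eq:compatibility} defining $\dom(D^\pi)$ as soon as $\zeta_0\ne 0$, so the equalities $\nabla(\zeta_0\1(N(W)=0))=0$ and the chain rule from Lemma~\ref{lem:productgf} have to be read inside $\dom(\overline{D})$ rather than in $\dom(D)$. I would justify this by exhibiting an approximating sequence of elements of $\cS'$ whose form functions $f_1^{(\ell)},\dots,f_n^{(\ell)},\dots$ smoothly interpolate $\zeta_0$ at $\partial W$ but converge to $0$ in the $L^2(\Omega)$-sense described in Lemma~\ref{lem:cv}, and whose images under $\nabla$ also converge to $0$; the hypothesis $\zeta\in\dom(\overline{D})$ and the closability of $\nabla$ established after Proposition~\ref{lem:specific:case} then transfer the chain-rule computations and the vanishing statements to the closure. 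Once this technical point is in place, the rest of the argument is a direct combination of Lemma~\ref{lem:1} and the identity~\eqref{e:key}.
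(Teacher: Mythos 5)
Your overall strategy — plug $\zeta$ into Lemma~\ref{lem:1} and reduce $\E[\zeta^2]+2\E[\nabla\zeta]$ to $4\E[\nabla\nabla\sqrt F/\sqrt F]$ via the identity~\eqref{e:key} — is exactly what the paper does (it states no more than ``combine~\eqref{eq:MSEStein0} and~\eqref{e:key}''). Your computation of the cross term is also correct: $\nabla\log F$ has zero form function at level $n=0$ (the sum in~\eqref{e:nabla} starts at $n\ge 1$), so it vanishes on $\{N(W)=0\}$ and only $\zeta_0^2 e^{-\theta|W|}$ survives from the square of the indicator.

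The weak point is the sketched justification that $\overline\nabla\bigl(\zeta_0\1(N(W)=0)\bigr)=0$ by exhibiting an approximating sequence whose form functions $f_n^{(\ell)}$ tend to $0$ in the $L^2$-sense of Lemma~\ref{lem:cv} \emph{while} their images under $\nabla$ also tend to $0$. Such a sequence cannot exist when $\zeta_0\ne 0$: the trace theorem forces $\|f_1^{(\ell)}\|_{H^1(W)}$ to stay bounded away from zero if $f_1^{(\ell)}|_{\partial W}=\zeta_0$, and for any boundary-layer construction the radial part $\nabla_{x_1}f_1^{(\ell)}\cdot x_1$, which is what enters $\nabla$ through~\eqref{e:nabla}, in fact blows up in $L^2(W)$ as the layer shrinks. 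So $\zeta_0\1(N(W)=0)$ is, in general, \emph{not} in $\dom(\overline D)$ on its own, and you cannot split $\overline\nabla\zeta$ term by term. What saves the argument is that $\zeta$ as a whole is assumed to lie in $\dom(\overline D)$: its form functions for $n\ge 1$ coincide with those of $\nabla\log F$, and the closed operator, as constructed in Lemma~\ref{lem:deriv:closed} and Appendix~B, acts through the formal expression~\eqref{e:nabla} applied to those levels only; the constant at level $n=0$ is invisible to $\nabla$ regardless of whether the indicator alone belongs to the domain. Rephrase the closure step this way — working with $\zeta$ directly rather than with the two summands separately — and your proof is complete.
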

Proposition~\ref{prop:MSEStein} gives a similar result
to~\citet[Proposition~4.1]{privault:reveillac:09} for one--dimensional Poisson
point processes. As a consequence of Proposition~\ref{prop:MSEStein}, the Stein estimator given by~\eqref{eq:defEst} will be more efficient than the
maximum likelihood estimator if we manage to find $F$ and $\zeta_0$ satisfying the conditions of Proposition~\ref{prop:MSEStein}
and such that $\zeta_0^2 e^{-\theta|W|} + 4\E (
{\nabla\nabla \sqrt{F}}/{\sqrt{F}}) <0$. This is investigated in the next section.

\subsection{A class of Stein estimators on the $d$--dimensional Euclidean ball}

In this section, we focus on the case where $W$ is the $d$--dimensional
Euclidean closed ball with center $0$ and radius $w$, denoted
$\overline{B}(0,w)$ in the following. Without loss of generality, we can
restrict to the case $w=1$.  We combine Proposition~\ref{prop:MSEStein} and the
isotropic Malliavin derivative defined in Section~\ref{s:malliavin:derivative}
to build a large class of isotropic Stein estimators. We need some additional notation. Let
$n\geq 1$, $1\leq k\leq n$ and let $x_1,\dots,x_n \in W$, we define $x_{(k),n}$ by induction as follows
\begin{align*}
x_{(1),n} &= \mathrm{argmin}_{u\in \{x_1,\dots,x_n,\}} \|u\|^2 \\
x_{(k),n} &= \mathrm{argmin}_{u\in \{x_1,\dots,x_n\} \setminus
  \{x_{(1),n},\dots ,x_{(k-1),n} \}} \|u\|^2.
\end{align*}
The point $x_{(k),n}$ is the $k-$th closest point of $\{x_1,\dots,x_n\}$
to zero. Similarly, we denote by $X_{(k)}$ the $k-$th closest point of
$\bX$ to zero. Note that, the point $X_{(k)}$ may lie outside $W$ depending on
the value of $N(W)$ for the given realization. We are also given some function $\varphi\in\mathcal{C}^2([0,1],\R^+)$ satisfying the two following additional properties
\begin{equation*}
  \tag{$\mathcal{P}$} \qquad \inf_{t\in [0,1]}\varphi(t)>0 \mbox{ and } \varphi'(1)=0.
\end{equation*}

\noindent Then, the Poisson functional involved in the definition
of our Stein estimator writes
\begin{equation}\label{e:def:Fk}
\sqrt F_k  = \sum_{n \geq k} \1(N(W)=n) g_{k,n}(X_1,\dots,X_n)+ \varphi(1)\,\1(N(W)<k)
\end{equation}
where for $x_1,\dots,x_n \in W$ and $n\geq k \geq 1$
\begin{equation}\label{e:def:form:functions:Fk}
g_{k,n}(x_1,\dots,x_n) = \varphi(\| x_{(k),n}\|^2)
\end{equation}
for a function $\varphi$ satisfying ($\mathcal{P}$). In
other words, we focus on functionals $F \in \mathcal S^\prime$ such that
\begin{align}\label{e:def:Fk:2}
  \sqrt F_k  & = \sum_{n \geq k} \1(N(W)=n) \varphi(\| X_{(k),n}\|^2)+
  \varphi(1)\1(N(W)<k)
\end{align}
from which we build our main result.
\begin{proposition} \label{prop:zeta}
Let $k\geq 1$. Let $\varphi \in \mathcal{C}^2([0,1],\R^+)$
satisfying~($\mathcal{P}$). Define $F_k$
from $\varphi$ as in~\eqref{e:def:Fk:2}. Then $\zeta_k = \nabla \log F_k$ is
an element of $\dom(\overline{D})$. Moreover, the Stein estimator, its
mean squared error and its gain with respect to the
maximum likelihood estimator are given by
\begin{align}
\widehat \theta_k &= \widehat \theta_{\MLE}-\frac{4}{d|W|} \,
\frac{Y_{(k)} \,\varphi'(Y_{(k)})}{\varphi(Y_{(k)} )}\label{e:zetak}\\
\MSE ( \widehat \theta_k) &= {\MSE(\widehat \theta_{\MLE})} - \frac{16}{d^2|W|^2}
\E\left[\mathcal G(Y_{(k)} )\right]\label{e:msek}\\
\Gain(\widehat \theta_k) &= \frac{\MSE(\widehat \theta_{\MLE})-
\MSE(\widehat \theta_k)}{\MSE(\widehat \theta_{\MLE})} =\frac{16}{\theta d^2|W|}\E\left[\mathcal G(Y_{(k)} )\right]
\label{e:gaink}
\end{align}
where
\begin{equation}
  \label{eq:Yk}
  Y_{(k)}=1+(\| X_{(k)}\|^2-1)\1(\| X_{(k)}\|\leq 1)=\left\{
  \begin{array}{ll}
      \| X_{(k)}\|^2&\mbox{ if }\| X_{(k)}\|\leq 1\\
      1&\mbox{ otherwise}
    \end{array}\right.
\end{equation}
and $\mathcal G(t) = - t (\varphi^\prime(t)+t\varphi^{\prime\prime}(t)) / \varphi(t)$.
\end{proposition}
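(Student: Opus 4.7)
The plan is to specialize Proposition~\ref{prop:MSEStein} to $F=F_k$ with $\zeta_0=0$. This reduces the proof to four tasks: (i) verify that $F_k$, $\nabla F_k$, and $\zeta_k=\nabla\log F_k$ all lie in $\dom(\overline D)$; (ii) compute $\zeta_k$ in closed form, yielding~\eqref{e:zetak}; (iii) evaluate $\E[\zeta_k^2]+2\E[\nabla\zeta_k]$ and insert into Lemma~\ref{lem:1} to obtain~\eqref{e:msek}; (iv) divide by $\MSE(\widehat\theta_{\MLE})=\Var(N(W))/|W|^2=\theta/|W|$ to get~\eqref{e:gaink}.

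For the domain-membership step, I would verify the compatibility condition~\eqref{eq:compatibility} on each functional. Each form function is constant on $\{n<k\}$ and depends smoothly on $x_{(k),n}$ on $\{n\ge k\}$, so the only nontrivial transition is at $n=k-1\to n=k$: when the last point is pushed to $\partial W$, it is the farthest of the $k$ points almost surely, hence coincides with $x_{(k),k}$ and has unit norm. The resulting boundary values are $\varphi(1)^2$, $\varphi(1)\varphi'(1)$ and $\varphi'(1)/\varphi(1)$ respectively, all matching the $\{n<k\}$ constants because $\varphi'(1)=0$. Transitions at $n\to n+1$ for $n\ge k$ are automatic since a boundary point is never the $k$-th closest. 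Lemma~\ref{lem:productgf} then lets me propagate $\nabla$ through $\sqrt{\cdot}$, $\log$, and $1/\cdot$, using that $F_k$ is bounded away from zero by $(\inf\varphi)^2>0$.

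For the explicit form of $\zeta_k$, I use the fact that almost surely the $\|X_i\|$ are pairwise distinct, so the argmin index $j(k,n)$ is unique and $\nabla_{x_i}\varphi(\|x_{(k),n}\|^2)=2\varphi'(\|x_{(k),n}\|^2)\,x_{(k),n}\,\1(i=j(k,n))$. Plugging into~\eqref{e:nabla} gives
\[
\nabla\sqrt{F_k}=-\frac{2}{d}\sum_{n\ge k}\1(N(W)=n)\,\|X_{(k),n}\|^2\varphi'(\|X_{(k),n}\|^2),
\]
and applying $\nabla\log F_k=2\nabla\sqrt{F_k}/\sqrt{F_k}$ together with $\|X_{(k),N(W)}\|^2=Y_{(k)}$ on $\{N(W)\ge k\}$ (while $\varphi'(1)=0$ kills the $\{N(W)<k\}$ contribution) recovers~\eqref{e:zetak}. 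Iterating the same computation on $\zeta_k$ yields
\[
\nabla\zeta_k=\frac{8}{d^2}\1(N(W)\ge k)\,Y_{(k)}\psi'(Y_{(k)}),\qquad \psi(t)=t\varphi'(t)/\varphi(t).
\]
A short expansion gives $Y_{(k)}\psi'(Y_{(k)})=-\mathcal G(Y_{(k)})-\psi(Y_{(k)})^2$, and since $\zeta_k^2=\tfrac{16}{d^2}\psi(Y_{(k)})^2$, the $\psi^2$ terms cancel in $\E[\zeta_k^2]+2\E[\nabla\zeta_k]$, leaving $-\tfrac{16}{d^2}\E[\mathcal G(Y_{(k)})]$. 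Lemma~\ref{lem:1} delivers~\eqref{e:msek}, and~\eqref{e:gaink} is immediate.

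The main obstacle is the domain-membership step: the matching of boundary form-function values is exactly what forces condition $(\mathcal P)$, since $\varphi'(1)=0$ is what makes the three boundary values collapse to their $\{n<k\}$ counterparts. The non-smoothness of the argmin map is harmless because ties in norm form a null set of configurations. Once the compatibility is in hand, the two-stage gradient computation is routine via Lemma~\ref{lem:productgf}.
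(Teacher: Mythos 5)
Your closed-form computations are correct, and your MSE route is slightly more direct than the paper's: rather than invoking Proposition~\ref{prop:MSEStein} (which packages the identity~\eqref{e:key}), you plug into Lemma~\ref{lem:1} and verify by hand that $\E[\zeta_k^2]+2\E[\nabla\zeta_k]$ collapses to $-\tfrac{16}{d^2}\E[\mathcal G(Y_{(k)})]$ via $t\psi'(t)=-\mathcal G(t)-\psi(t)^2$ with $\psi(t)=t\varphi'(t)/\varphi(t)$. This is a perfectly valid re-derivation of~\eqref{e:key} for this particular functional, and your boundary-compatibility check (including the observation that $\varphi'(1)=0$ is precisely what makes the $n=k-1\to n=k$ transition consistent) matches the heuristic sketch the paper gives before Lemma~\ref{lem:deriv:closed}.

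The gap is in the domain-membership step, which is the real content of the proposition. You plan to verify the compatibility condition~\eqref{eq:compatibility} and then propagate $\nabla$ through $\sqrt{\cdot}$, $\log$, $1/\cdot$ via Lemma~\ref{lem:productgf}. But $\dom(D)\subset\mathcal S'$, and membership in $\mathcal S'$ requires the form functions to be $\mathcal C^1(W^n,\R)$. The form functions $g_{k,n}(x_1,\dots,x_n)=\varphi(\|x_{(k),n}\|^2)$ are continuous but not $\mathcal C^1$: they have kinks on $\{\|x_i\|=\|x_j\|=\|x_{(k),n}\|\}$. So $F_k\notin\mathcal S'$, hence $F_k\notin\dom(D)$, and Lemma~\ref{lem:productgf} (whose hypotheses are $F,G\in\mathcal S'$) does not apply. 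Dismissing this with ``ties form a null set'' does not establish $F_k\in\dom(\overline D)$: placing $F_k$ in the closure requires exhibiting a sequence $(F_\ell)\in\dom(D)$, with $\mathcal C^1$ form functions satisfying the boundary conditions, such that $F_\ell\to F_k$ and $\nabla F_\ell$ converges in $L^2(\Omega)$, and then identifying the limit with the claimed formula. That is precisely what Lemma~\ref{lem:deriv:closed} accomplishes (via the weak-gradient computation of Lemma~\ref{lem:deriv}, the Sobolev approximation of Lemma~\ref{lem:density:Psi}, and the trace-extension machinery of Proposition~\ref{pro:density:compatibility}). The paper's proof applies Lemma~\ref{lem:deriv:closed} four times, with $\Psi=\log\varphi$, $\Psi(t)=t\varphi'(t)/\varphi(t)$, $\Psi=\varphi$ and $\Psi(t)=-2t\varphi'(t)/d$, to get both domain membership and the explicit gradients in one stroke; your argument needs to either cite that lemma for each of these functionals or reconstruct the approximation argument. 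Without it, the applicability of Lemma~\ref{lem:1} (which requires $\zeta_k\in\dom(\overline D)$) is unsupported.
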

Proposition~\ref{prop:zeta} reveals the interest of the Poisson functional $F_k$ given by~\eqref{e:def:Fk:2}. We obtain isotropic estimators of $\theta$ depending only on $\|X_{(k)}\|^2$. It is worth mentioning that the distribution of $\|X_{(k)}\|^2$ is well--known for a homogeneous
Poisson point process (see Lemma~\ref{lem:gamma}). This allows us to derive efficient and fast estimates of $\E [\mathcal G(Y_{(k)})]$ which can then be optimized w.r.t. the different parameters. This is studied in more details in Section~\ref{sec:sim}.

The proof of Proposition~\ref{prop:zeta} requires first to compute the gradient of the
functions $g_{k,n}$ given by \eqref{e:def:form:functions:Fk} and second to ensure that $\sqrt{F_k}$ belongs to $\dom(\overline{D})$. To this end, we use the following
lemma.
\begin{lemma}\label{lem:deriv:closed}
\com{Let $H^1([0,1],\R)$ be the Sobolev space defined by~\eqref{eq:defSobClosed} and let \linebreak$\Psi \in H^1([0,1],\R)$.} 
Then,
\begin{equation}\label{e:def:F}
G_k=\sum_{n\geq k} \1(N(W)=n)\Psi(\|X_{(k),n}\|^2)+\Psi(1)\1(N(W)<k) \, \in \,\dom(\overline{D})
\end{equation}
and
\begin{equation}\label{e:def:nablaF}
\nabla G_k =-\frac{2}{d}\sum_{n\geq k}\1(N(W)=n)\,\|X_{(k),n}\|^2\Psi'(\|X_{(k),n}\|^2).
\end{equation}
\end{lemma}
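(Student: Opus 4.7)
By the closability of $D^\pi$ (Theorem~\ref{thm:ipp}), it suffices to produce a sequence $(G_k^{(\ell,\epsilon)}) \subset \dom(D^\pi)$ such that $G_k^{(\ell,\epsilon)} \to G_k$ in $L^2(\Omega)$ and $D^\pi G_k^{(\ell,\epsilon)}$ converges in $L^2(\Omega, L^2(W, \R^d))$ to the constant random field associated with the right-hand side of~\eqref{e:def:nablaF}; both conclusions of the lemma then follow simultaneously. By Lemma~\ref{lem:cv}, the two convergences translate into weighted-$L^2$ convergence of the corresponding form functions on each $W^n$, summed with weights $\theta^n/n!$.

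\textbf{Regularisation.} Two distinct sources of irregularity must be smoothed. First, $\Psi$ lies only in $H^1([0,1],\R)$; I pick $\Psi_\ell \in \mathcal{C}^2([0,1],\R)$ with $\Psi_\ell(1) = \Psi(1)$ and $\Psi_\ell \to \Psi$ in $H^1$, so by the one-dimensional Sobolev embedding $\Psi_\ell \to \Psi$ uniformly on $[0,1]$ while $\Psi'_\ell \to \Psi'$ in $L^2([0,1])$. Second, the selection map $T_{k,n}(x_1,\dots,x_n) = \|x_{(k),n}\|^2$ is only Lipschitz: it is $\mathcal{C}^\infty$ on each cell of $W^n$ where the norms $\|x_i\|$ are strictly ordered, but has kinks on the null set $\Sigma_n = \{\exists\, i \ne j : \|x_i\|=\|x_j\|\}$. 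Using a $\mathcal{C}^\infty$ partition of unity subordinate to an $\epsilon$-thickening of these ordered cells, one builds a $\mathcal{C}^\infty$ function $T_{k,n,\epsilon}: W^n \to [0,1]$ that coincides with $T_{k,n}$ off an $\epsilon$-tube around $\Sigma_n$ and inherits the key boundary property of $T_{k,n}$: whenever one of the arguments lies on $\partial W$, removing it leaves the $k$-th order statistic of the norms unchanged. Set $f_{\ell,\epsilon,n} := \Psi_\ell \circ T_{k,n,\epsilon}$ for $n \geq k$ and $f_{\ell,\epsilon,n} := \Psi_\ell(1)$ for $n < k$; this defines the form functions of a Poisson functional $G_k^{(\ell,\epsilon)}$.

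\textbf{Verification and passage to the limit.} Membership of $G_k^{(\ell,\epsilon)}$ in $\dom(D^\pi)$ requires two checks: (i) $\mathcal{C}^1$-smoothness of the form functions with $L^\infty$ and gradient norms bounded by $C^n$, which follows from the $\mathcal{C}^2$-regularity and boundedness of $\Psi_\ell$ on $[0,1]$ and a uniform (in $n,\epsilon$) Lipschitz bound on $T_{k,n,\epsilon}$ coming from the partition construction; (ii) the boundary compatibility~\eqref{eq:compatibility}, which reduces to the elementary observation that if $z_{n+1} \in \partial W$ then $\|z_{n+1}\|=1$ makes $z_{n+1}$ a largest-norm element of $\{z_1,\dots,z_{n+1}\}$, so the $k$-th smallest norm is unchanged when $z_{n+1}$ is removed (and the case $n=k-1$ collapses to $T_{k,k}=1$), a property preserved by the construction. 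Applying~\eqref{e:nabla} and the chain rule gives
\[
 \nabla G_k^{(\ell,\epsilon)} = -\tfrac{1}{d}\sum_{n\geq k}\1(N(W)=n)\,\Psi'_\ell(T_{k,n,\epsilon})\,\sum_{i=1}^n (\nabla_{x_i} T_{k,n,\epsilon})\cdot X_i,
\]
and the direct calculation $\sum_i \nabla_{x_i} T_{k,n}\cdot x_i = 2\|x_{(k),n}\|^2$ on each ordered cell shows that outside the $\epsilon$-tube this expression reduces to $-\tfrac{2}{d}\sum_{n\geq k}\1(N(W)=n)\,\|X_{(k),n}\|^2\Psi'_\ell(\|X_{(k),n}\|^2)$. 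Sending $\epsilon \to 0$ kills the tube contribution, since its Lebesgue measure in $W^n$ tends to zero, and sending $\ell \to \infty$ uses the uniform convergence of $\Psi_\ell$ together with the $L^2$-convergence of $\Psi'_\ell$ and the absolutely continuous law of $\|X_{(k),n}\|^2$ (Lemma~\ref{lem:gamma}) to obtain the two convergences via Lemma~\ref{lem:cv}. The principal obstacle is the construction of $T_{k,n,\epsilon}$ that is simultaneously $\mathcal{C}^1$, uniformly Lipschitz as $\epsilon \to 0$, compatible across consecutive values of $n$ on $\partial W$, and close to $T_{k,n}$ in $L^2$; once this is arranged, closability of $D^\pi$ delivers the lemma.
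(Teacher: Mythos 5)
Your overall strategy---approximate $G_k$ in $L^2(\Omega)$ by a sequence in $\dom(D^\pi)$ whose Malliavin derivatives converge, then invoke the closability of $D^\pi$---is the same as the paper's, and your diagnosis of the two sources of irregularity ($\Psi\in H^1$ only; the selection map $T_{k,n}$ Lipschitz but not $\mathcal C^1$) is accurate. The gap is that the actual construction of the approximating sequence, which is the entire technical content of the paper's proof, is asserted rather than carried out. The form functions of $G_k^{(\ell,\epsilon)}$ must satisfy the compatibility condition~\eqref{eq:compatibility} \emph{exactly}: $T_{k,n+1,\epsilon}(z_1,\dots,z_n,z_{n+1})=T_{k,n,\epsilon}(z_1,\dots,z_n)$ whenever $\|z_{n+1}\|=1$. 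This is not an automatic consequence of the analogous identity for the unsmoothed $T_{k,n}$. A partition-of-unity smoothing on $W^{n+1}$ restricted to $\|z_{n+1}\|=1$ still ``sees'' proximity between $\|z_j\|$ and $1$ for $j\le n$ and blends across the corresponding ordered cells, whereas the independently built smoothing on $W^n$ does not involve $z_{n+1}$ at all; one has to verify that the two blended values coincide identically, and your phrase ``a property preserved by the construction'' asserts exactly what needs proof. You acknowledge this in your closing sentence, which amounts to conceding that the key lemma is missing.

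Two further points of the same flavour. First, to conclude via Lemma~\ref{lem:cv} you need the $H^1(W^n)$-approximation error to be of the form $C^n/\ell$ with $C$ independent of $n$, so that it is summable against the weights $\theta^n/n!$; saying ``the $\epsilon$-tube has Lebesgue measure tending to zero'' is correct for fixed $n$ but gives no such $n$-uniform control (the tube involves $\binom n2$ pairwise conditions). Second, you also need the gradient bound $\|f_{\ell,\epsilon,n}\|_{L^\infty}+\sum_i\|\nabla_{x_i}f_{\ell,\epsilon,n}\|_{L^\infty}\le C^n$ required by $\mathcal S'$, which again needs a uniform-in-$n$ Lipschitz bound on $T_{k,n,\epsilon}$. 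The paper sidesteps both difficulties by passing to radial variables $(r_1,\dots,r_n)=(\|x_1\|^2,\dots,\|x_n\|^2)$, extending to $\R^n$, and building the sequence $g_{\ell,n}$ inductively in $n$ via an explicit extension operator in Bessel-potential spaces (Proposition~\ref{pro:density:compatibility}), so that the trace identity $\gamma_{n,a}g_{\ell,n+1}=g_{\ell,n}$ holds by construction while the $\mathcal C^1$ and $H^1$ bounds of the form $C^n$ are tracked through the induction. That inductive, trace-based mechanism is what your sketch would need to replace, and the partition-of-unity route, while plausible in spirit, would have to be made compatible across $n$ by an equally careful device.
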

The proof of Lemma~\ref{lem:deriv:closed} being quite technical, we postpone it to
Appendix~\ref{s:proof:lem:deriv}. Here, we only present the key ideas sustaining it.\\

{\it Sketch of the proof of Lemma~\ref{lem:deriv:closed}.} Let $n\geq k$ and $x_{n+1}\in \partial W$, i.e. $\|x_{n+1}\|=1$, we have $x_{(k),n+1}=x_{(k),n}$ and $\Psi(\|x_{(k),n+1}\|^2)=\Psi(\|x_{(k),n}\|^2)$,
which is exactly the compatibility condition which has to be satisfied for $n\geq k$ by the form functions of Poisson functionals belonging to $\mathcal{S}'$.
When $n=k-1$ and $x_{n+1}=x_{k}\in \partial W$, i.e. $\|x_k\|=1$, we still have $x_{(k),k}=x_{k}$ and $\Psi(\|x_{(k),k}\|^2)=\Psi(1)$. Since for $n<k$, the forms functions are all equal to the constant function $\Psi(1)$, the compatibility conditions also hold for $n=k-1$ and $n<k-1$.

At any point $(x_1,\dots,x_n)$ such that $(x_1,\dots,x_n)\mapsto
\|x_{(k),n}\|^2$ is differentiable, the definition of the Malliavin derivative
and the usual chain rule easily lead to~\eqref{e:def:nablaF}.  Note that that
even if $\Psi\in \mathcal{C}^1(W,\R)$, the functional $G_k$ may not belong to
$\dom(D)$ since its form functions are not differentiable {everywhere}. Indeed for any $n\geq 1$, $(x_1,\dots,x_n)\mapsto
\|x_{(k),n}\|^2$ is not differentiable at any point $(x_1,\dots,x_n)$ such that
for some $i\neq j$, $\|x_i\|=\|x_j\|=\|x_{(k),n}\|$. In Lemma~\ref{lem:deriv:closed}, we
prove a weaker assertion, namely that $G_k\in\dom(\overline{D})$, which
means that $G_k$ can be obtained as the limit of Poisson functionals of
$\dom(D)$. Then, the proof of Lemma~\ref{lem:deriv:closed} relies on
the density results stated in Appendix~\ref{s:density}.

\begin{proof}[Proof of Proposition~\ref{prop:zeta}]
By definition,
\[
\log F_k  = 2\sum_{n \geq k} \1(N(W)=n)\log\varphi (\| X_{(k),n}\|^2)+\log\varphi(1)\1(N(W)<k).
\]
Since $\varphi$ is a continuously differentiable function, we can easily check that $\Psi\in H^1([0,1])$. So Lemma~\ref{lem:deriv:closed} can be applied to $\Psi=\log\varphi$ and
$G_k=\log(F_k)$. Hence, $\log F_k\in \dom(\overline{D})$ and
\[
\zeta_k=\nabla\log F_k=-\frac{4}{d}\sum_{n\geq k}\1(N(W)=n)\|X_{(k),n}\|^2\frac{\varphi'(\|X_{(k),n}\|^2)}{\varphi(\|X_{(k),n}\|^2)}.
\]
Then, we derive the explicit expression of $\widehat{\theta}_k$ given
by~\eqref{e:zetak}.  We also deduce that $\zeta_k=\nabla\log F_k\in
\dom(\overline{D})$ by applying once more Lemma~\ref{lem:deriv:closed}
with $\Psi(t)=t{\varphi'(t)}/{\varphi(t)}$, which also satisfies the required
properties thanks to the smoothness of $\varphi'$.  In view of
Proposition~\ref{prop:MSEStein},  we estimate
$\nabla\nabla\sqrt{F}_k/\sqrt{F}_k$ to derive~\eqref{e:msek}.
From~\eqref{e:def:Fk} and \eqref{e:def:form:functions:Fk}, $\sqrt{F}_k$ also
satisfies the assumptions of Lemma~\ref{lem:deriv:closed} with $\Psi=\varphi$.
Hence $\sqrt{F}_k\in \dom(\overline{D})$ and
\[
\nabla\sqrt{F}_k=-\frac{2}{d}\sum_{n\geq k}\1(N(W)=n)\|X_{(k),n}\|^2\varphi'(\|X_{(k),n}\|^2).
\]
The conclusion ensues by applying Lemma~\ref{lem:deriv:closed} to
$G_k=\nabla\sqrt{F}_k$ with $\Psi(t)=-2t\varphi'(t)/d$ and we obtain the following formulae.
\begin{align*}
\nabla\nabla\sqrt{F}_k&=\frac{4}{d^2}\sum_{n\geq
  k}\1(N(W)=n)\left[\|X_{(k),n}\|^2\varphi'(\|X_{(k),n}\|^2)+\|X_{(k),n}\|^4\varphi''(\|X_{(k),n}\|^2)\right]\\
\frac{\nabla\nabla\sqrt{F}_k}{\sqrt{F}_k}&=\frac{4}{d^2}\sum_{n\geq k}\1(N(W)=n)\frac{\left[\|X_{(k),n}\|^2\varphi'(\|X_{(k),n}\|^2)+\|X_{(k),n}\|^4\varphi''(\|X_{(k),n}\|^2)\right]}{\varphi(\|X_{(k),n}\|^2)}.
\end{align*}
Then~\eqref{e:msek} follows from the last equality, while~\eqref{e:gaink} is
directly deduced from~\eqref{e:msek}.
\end{proof}

\section{Numerical experiments} \label{sec:sim}

We underline that it is impossible to find a function $\varphi$ satisfying $(\mathcal P)$ and such that $\mathcal G(t)$ defined by~\eqref{e:gaink} is positive for any $t\in [0,1]$. In this section, we analyze two examples for which we can obtain positive gain even though $\mathcal G$ is not positive everywhere. Then, we conduct a numerical and simulation study where, in particular, we show that in many practical situations we can get a gain larger than 30\%.

Before this, we should notice that the mean squared error
and the gain of our new estimators $\widehat{\theta}_k$ only depend on the
distribution of $\norm{X_{(k)}}^2$. The following result shows that expectations
involving this random variable can be computed directly without sampling a Poisson
point process, which speeds up the simulations a lot.

\begin{lemma} \label{lem:gamma}
  Let $k\geq 1$, the density  of $\norm{X_{(k)}}^2$ is given by
  \[
    f_{\norm{X_{(k)}}^2}(t) = \frac{d}{2} v_d \theta t^{-1} e^{v_d \theta t^{d/2}} \,
    \frac{\left( v_d\theta t^{d/2}\right)^{k-1}}{(k-1)!} \1(t \ge 0)
  \]
  where $v_d = \pi^{d/2}\Gamma(d/2+1) w^d$ is the volume of $\overline{B}(0, w)$.
  Moreover, for any positive measurable function $h:\R^+ \to \R^+$, we have
  $\E \left( h(\norm{X_{(k)}}^2) \right) = \E \left( h(Z^{2/d}) \right)$
  where $Z$ is a real valued random variable following a Gamma distribution with shape $k$
  and rate~$v_d \theta$.
\end{lemma}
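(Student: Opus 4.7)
\medskip
\noindent
My plan is to compute the cumulative distribution function of $\|X_{(k)}\|^2$ by reducing to a Poisson count in a ball, differentiate to obtain the density, and then rescale to identify the Gamma variable $Z$. The first step relies on the elementary observation that $X_{(k)}$ is the $k$-th closest point of $\bX$ to the origin, so the event $\{\|X_{(k)}\|^2 \le t\}$ coincides with $\{n(\bX \cap \overline{B}(0,\sqrt{t})) \ge k\}$. Since $\bX$ is a homogeneous Poisson point process on $\R^d$ with intensity $\theta$, the count $n(\bX \cap \overline{B}(0,\sqrt{t}))$ is Poisson distributed with parameter $\lambda(t) := v_d \theta t^{d/2}$ (using $w=1$, so that $v_d$ is just the volume of the unit ball). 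Consequently the survival function of $\|X_{(k)}\|^2$ is the Poisson tail sum $\sum_{j \ge k} e^{-\lambda(t)}\lambda(t)^j/j!$.

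Next, I would differentiate this cumulative distribution function with respect to $t$. The standard telescoping identity $\frac{d}{d\lambda}\sum_{j\ge k} e^{-\lambda}\lambda^j/j! = e^{-\lambda}\lambda^{k-1}/(k-1)!$ eliminates all but one term, and combined with $\lambda'(t) = (d/2)\, v_d\theta\, t^{d/2-1}$ and the chain rule it produces the claimed density of $\|X_{(k)}\|^2$ after elementary rearrangement of the powers of $t$.

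For the Gamma representation, I would perform the change of variables $s = t^{d/2}$ in the density just obtained. A one-line computation shows that the resulting pushforward density of $\|X_{(k)}\|^d$ is $(v_d\theta)^k s^{k-1} e^{-v_d\theta s}/(k-1)!$, which is the density of a Gamma distribution with shape $k$ and rate $v_d\theta$. Setting $Z = \|X_{(k)}\|^d$ then yields $\|X_{(k)}\|^2 = Z^{2/d}$ pointwise, and the announced expectation identity $\E[h(\|X_{(k)}\|^2)] = \E[h(Z^{2/d})]$ follows by transfer. I anticipate no substantive obstacle: the argument reduces to the void-probability characterization of the Poisson process together with an elementary differentiation. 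As an alternative route, one can invoke the mapping theorem to identify $(v_d\|X_{(i)}\|^d)_{i\ge 1}$ with the arrival times of a one-dimensional Poisson process of rate $\theta$ on $\R^+$, making the Gamma conclusion immediate.
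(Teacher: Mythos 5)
Your proof is correct, and it follows the standard route: identify the event $\{\|X_{(k)}\|^2\le t\}$ with having at least $k$ points of $\bX$ in $\overline B(0,\sqrt t)$, use that this count is Poisson with mean $\lambda(t)=v_d\theta t^{d/2}$, differentiate the Poisson tail via the telescoping identity, and then change variables $s=t^{d/2}$ to read off the Gamma$(k,v_d\theta)$ law of $\|X_{(k)}\|^d$. The paper states Lemma~\ref{lem:gamma} without proof, so there is nothing to compare against; your argument supplies exactly what is needed, and the alternative remark via the mapping theorem (identifying $v_d\|X_{(i)}\|^d$ with the jump times of a rate-$\theta$ Poisson process on $\R^+$) is also a perfectly good one-line justification.

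One point worth flagging: the density you derive is
\[
f_{\|X_{(k)}\|^2}(t)=\frac{d}{2}\,v_d\theta\,t^{d/2-1}\,e^{-v_d\theta t^{d/2}}\,\frac{\bigl(v_d\theta t^{d/2}\bigr)^{k-1}}{(k-1)!}\,\1(t\ge 0),
\]
which differs from the formula printed in the lemma in three places: the exponential should carry a minus sign, the prefactor should be $t^{d/2-1}$ rather than $t^{-1}$, and the volume constant should read $v_d=\pi^{d/2}w^d/\Gamma(d/2+1)$ (the $\Gamma$ belongs in the denominator). These look like typos in the paper's display --- the printed density does not even integrate to one --- and your computation gives the correct version. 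The Gamma representation, which is what the paper actually uses downstream for simulation, is stated correctly in the lemma and you recover it faithfully.
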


\subsection{First example}

Let $\kappa>0$ and $\gamma\in (0,1/2)$, we define for $t\in [0,1]$
\begin{equation}
  \label{e:ch1} \varphi(t)=(1-t) (\chi_{[0,1-\gamma]} * \psi)(t) \;\,
   +\kappa
\end{equation}
where, for any measurable set $A$, $\chi_A = \1(t \in A)$ denotes the
characteristic function of the set $A$ and the star $*$ stands for the
convolution product. The Schwarz function (see e.g. \cite{hormander03}) $\psi:[-1,1]\to \R^+$, defined by
\[
  \psi(t) = c \exp\left( -\frac1{1-|t|}\right) \mbox{ with } c \mbox{ such that} \int_0^1\psi(t)\dd t=1
\]
satisfies $\psi\geq 0$ and $\psi^{(m)}(\pm 1)=0$ for any $m\geq 0$, which implies that $\varphi$ satisfies
($\mathcal P$). The main interest of this function is
that for any $t\in [0,1-2\gamma]$, $\varphi(t)=1-t+\kappa$,
$\varphi^\prime(t)=-1$ and $\varphi^{\prime\prime}(t) =0$ which leads to
$\mathcal G(t)=t/(1-t+\kappa)\geq 0$ for any $t\in [0,1-2\gamma]$. Figure~\ref{fig:ch1}
illustrates this function $\varphi$ with $\kappa=0.5$ and $\gamma=0.05$. We can
observe that $\mathcal G(t)\geq 0$ for $t\leq 0.9$ but $\cG(t)$ can be
highly negative for $t>0.9$. Note also that the smaller $\gamma$, the more
negative $\mathcal G$. This highlights the danger of Example~1. From a
practical point of view, the best choice would be to tune the integer $k$ such
that $\|X_{(k)}\|^2$ often lies in a region in which $\cG$ is high, however this
region is quite small and the function $\cG$ decreases quickly outside of it and
takes highly negative values, which may yield to negative gains in the end.  On
the contrary, if reasonable values are chosen for $\gamma$ and $k$ is small then
there is hardly no chance that $\|X_{(k)}\|^2>1-2\gamma$ but the corresponding
gain value remains very small.  This first example shares some similarities with
the estimator proposed by \cite{privault:reveillac:09} in the case $d=1$, see
Section~\ref{sec:PR}.

\begin{figure}[htbp]
\begin{center}
\includegraphics[scale=.5]{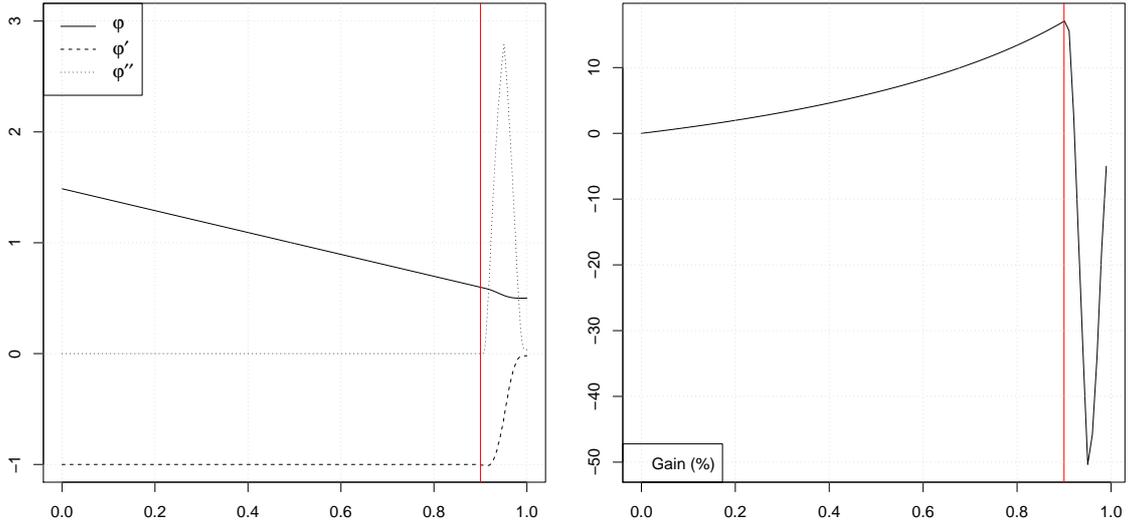}
\caption{\label{fig:ch1}Plots of $\varphi,\varphi^\prime, \varphi^{\prime\prime}$ and $\mathcal G$ for the function $\varphi$ defined by~\eqref{e:ch1} with $\kappa=0.5$ and $\gamma=0.05$. The vertical line indicates the value $1-2\gamma=0.9$ before which the function $\varphi$ is linear.}
\end{center}
\end{figure}

\subsection{Second example} Let $\kappa\geq 2$ and $\gamma\in \R$, we define for $t\in [0,1]$
\begin{equation}
  \label{e:ch2} \varphi(t) = \exp\left( \gamma(1-t)^\kappa\right).
\end{equation}
We can easily check that the property~($\mathcal P$) holds
for any $\kappa\geq 2$.  The main advantage of this function is that the gain
function has a ``power'' shape. For instance, when $\kappa=2$, $\mathcal G(t) =
2\gamma t\left( 1-2t - 2 \gamma t (1-t)^2 \right)$. For any value of $\kappa$, we
can show that there exists a unique $t_0\in (0,1)$ such that $\mathcal G(t_0)=0$
and such that $\sign(\mathcal G)=\sign(\gamma)$ for $t\in [0,t_0)$ and
$\sign(\mathcal G)= \sign(-\gamma)$ for $t\in (t_0,1]$. Figure~\ref{fig:ch2}
illustrates this function. The top-right figure reveals the interest of
Example~2. It shows that when $k$ is chosen large enough, then
$\|X_{(k)}\|^2>t_0$ very often and therefore $\mathcal G(t)$ is quite large.

The latter comment is the main reason why among these two examples (and many
others we have tried) the exponential function~\eqref{e:ch2} produces the best
 and the most stable empirical results. Thus, we only focus on this case in the
following. With this choice for the function $\varphi$, the Stein estimator
writes
\begin{align}
   \widehat \theta_k &    = \widehat \theta_{\MLE} + \frac{4d}{|W|} \gamma \kappa
  Y_{(k)}  (1-Y_{(k)})^{\kappa-1}  \label{e:steinCh2}
\end{align}
where $Y_{(k)}$ is given by \eqref{eq:Yk}.

\subsection{Optimization of the gain and a first simulation study}

\begin{figure}[htbp]
\begin{center}
\includegraphics[scale=.5]{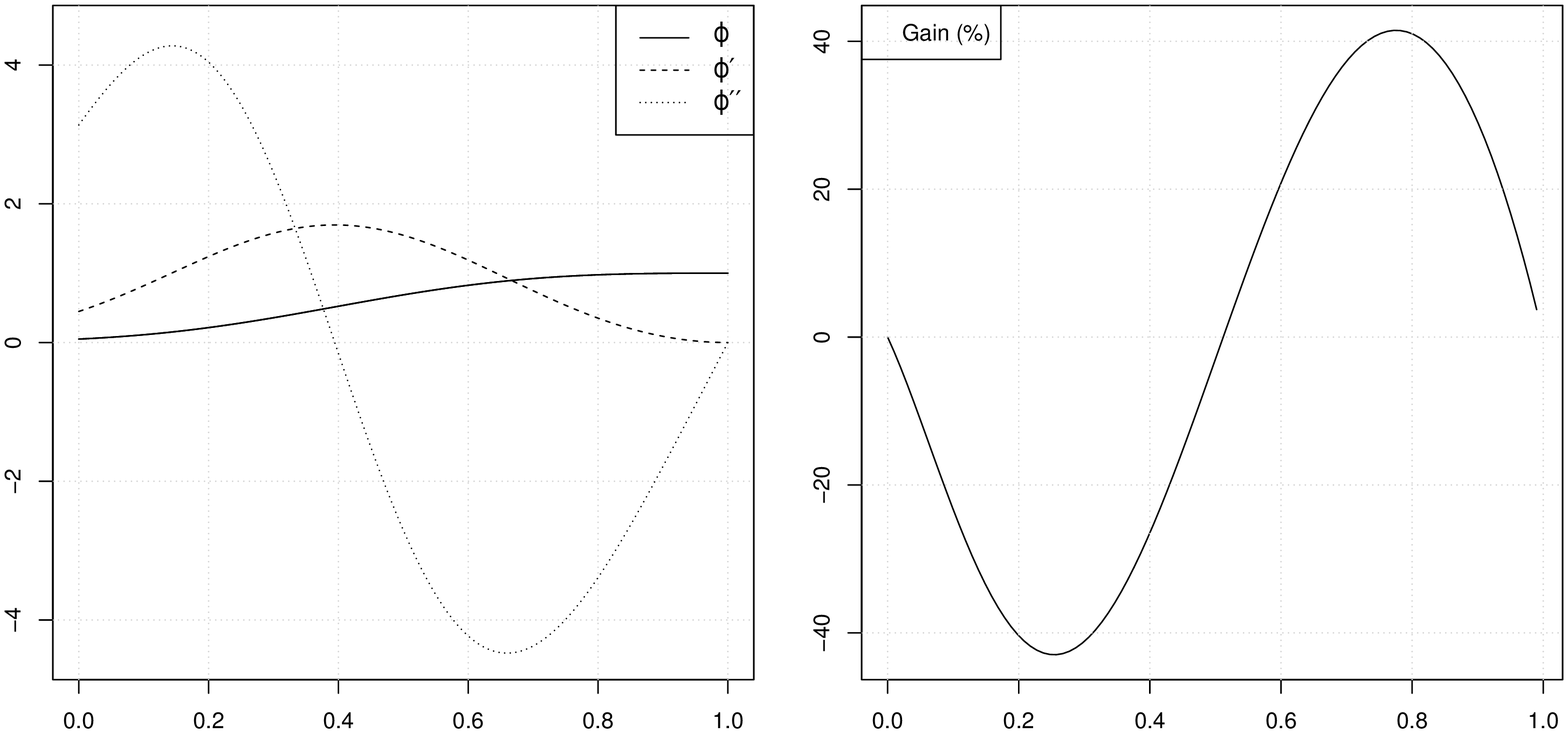}
\includegraphics[scale=.5]{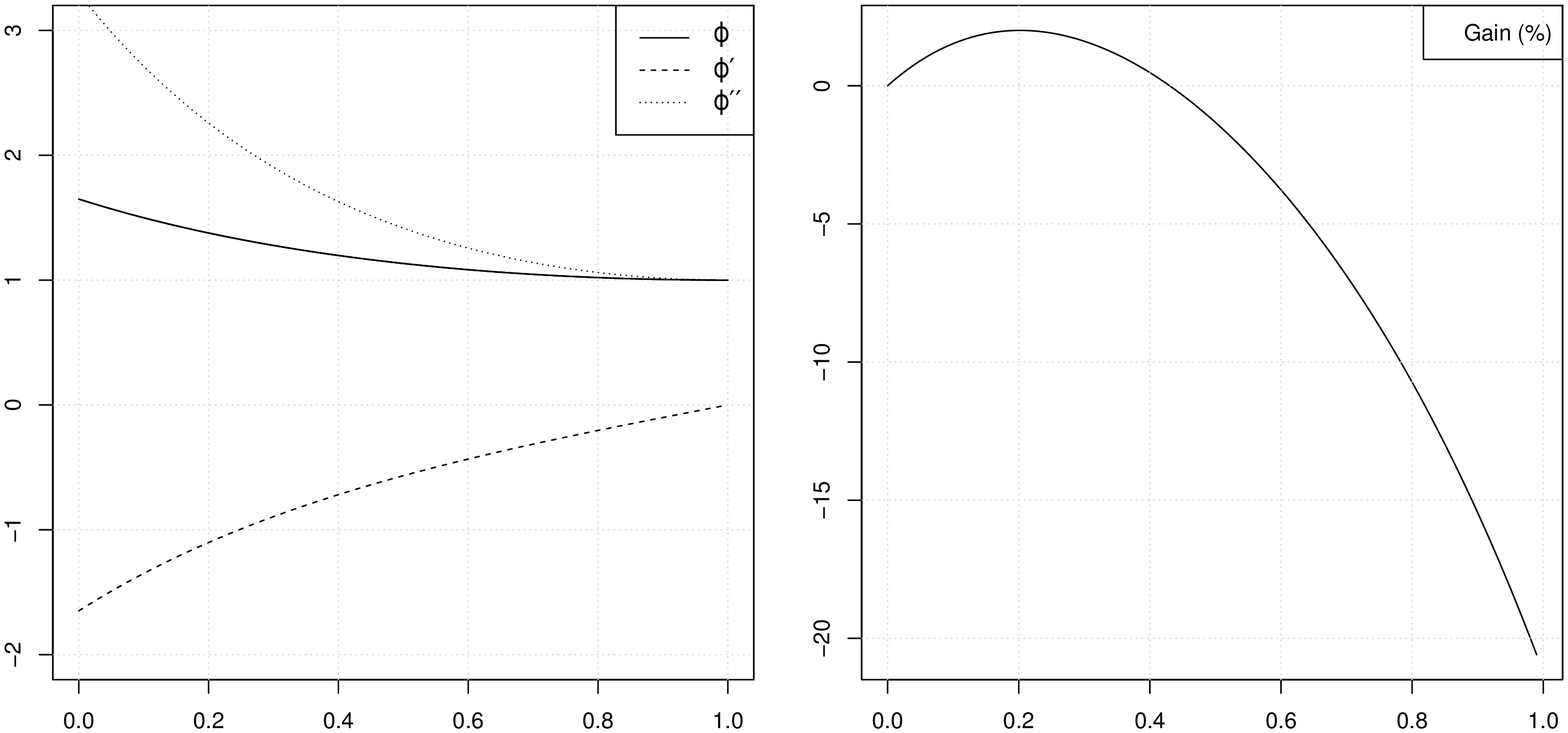}
\caption{\label{fig:ch2}Plots of $\varphi,\varphi^\prime, \varphi^{\prime\prime}$ and $\mathcal G$ for the function $\varphi$ defined by~\eqref{e:ch2}. The parameters $\kappa$ and $\gamma$ are equal to $3$ and $-3$ for the top figures and to 2 and 0.5 respectively for the bottom figures.}
\end{center}
\end{figure}

Before optimizing the parameters $k,\kappa$ and $\gamma$, we first want to check
the integration by parts formula. For $k=10,20,50$ and $80$ (and for specific
parameters $\kappa$ and $\gamma$ we do not comment upon for now), we represent in
Figure~\ref{fig:ipp} the empirical and theoretical gains computed by the Monte-Carlo
approximation of~\eqref{e:gaink} based on $50000$ replications of Poisson point
processes in the 2--dimensional Euclidean ball. We clearly observe that the
empirical gain perfectly fits the one computed from~\eqref{e:gaink}. The second
observation is that $k$, $\kappa$ and $\gamma$ need appropriate tuning,
otherwise the gain can become highly negative. For instance, when $k=20$ and
$\theta=20$ (and $\kappa$, $\gamma$ chosen as specified in the caption of
Figure~\ref{fig:ipp}) the gain reaches the value -200\%.

\begin{figure}[htbp]
\begin{center}
\includegraphics[scale=.6]{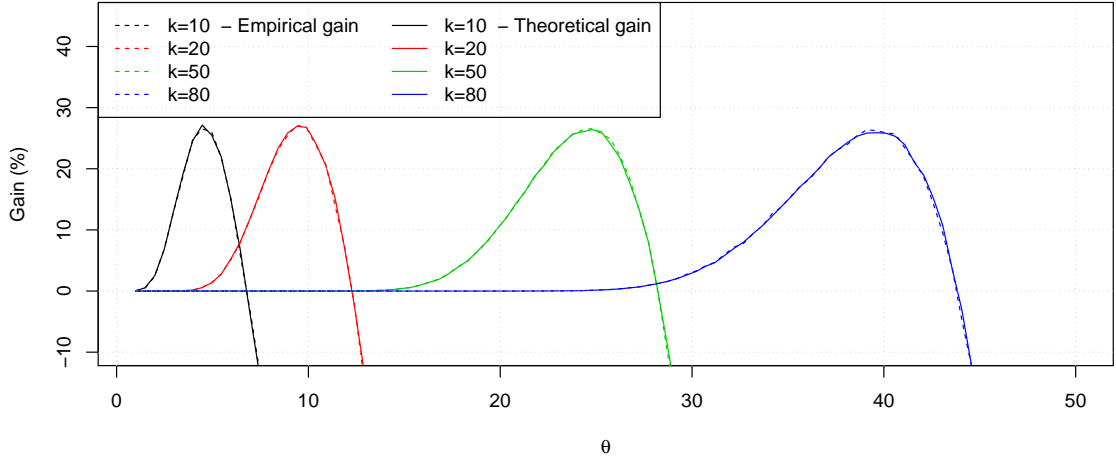}
\caption{\label{fig:ipp}Empirical and Monte-Carlo approximations of the
  theoretical gain in terms of $\theta$ for the exponential family
  \eqref{e:ch2}. For $k=10,20,50,80$, the parameters $\kappa$ and $\gamma$ are
  chosen to optimize the theoretical gain for $\theta=5,10,25,40$ respectively.
  The results are based on $m=50000$ replications of Poisson point processes in
  the 2--dimensional euclidean ball $\overline{B}(0,1)$. The Monte-Carlo approximation
  of~\eqref{e:gain-ch2} is also based on $m=500000$ samples.}
\end{center}
\end{figure}

For the exponential function $\varphi$ given by~\eqref{e:ch2}, the gain function writes
\begin{align*}
  \cG(t; \kappa; \gamma) =  \gamma \kappa t (1 - t)^{\kappa-1} - \gamma^2
  \kappa^2 t^2 (1 - t)^{2(\kappa-1)} - t^2 \gamma \kappa(\kappa-1) (1-t)^{\kappa-2}.
\end{align*}
Now, we explain how we can compute $\arg\max_{(k, \gamma, \kappa)}\E(\cG(Y_{k}; \kappa; \gamma))$. First, note that solving the equation $\frac{\partial}{\partial\gamma} \cG(t; \kappa; \gamma) = 0$ leads to an explicit formula for the optimal choice of the parameter $\gamma$
\begin{align*}
  \gamma^\star = \frac{\E\left( Y_{(k)} (1 -
      Y_{(k)})^{\kappa-1} - Y_{(k)}^2 (\kappa - 1) (1 -
      Y_{(k)})^{\kappa-2} \right)}{\E\left( 2 \kappa {Y_{(k)}}^2
      (1-Y_{(k)})^{2(\kappa-1)}\right)}.
\end{align*}
Plugging this value back in the formula of the gain leads to
\begin{align}
  \label{e:gain-ch2}
  \E(\cG(Y_{(k)}; \kappa; \gamma^\star)) =
  \frac{\E\left(  Y_{(k)} (1-Y_{(k)})^{\kappa-2} (1 - \kappa
      Y_{(k)}) \right)}{\E\left( {Y_{(k)}}^2 (1 -
      Y_{(k)})^{2(\kappa-1)} \right)}.
\end{align}
Second, we   compute numerically $\arg\max_\kappa = \E(\cG(Y_{(k)};
\kappa; \gamma^\star))$. To do so, we rely on deterministic optimization
techniques after replacing the expectation by a sample average; we refer the
reader to~\cite{rubinstein:shapiro:93} for a review on sample average
approximation. Note that the random variable $Y_{(k)}$ can be sampled very
efficiently by using Lemma~\ref{lem:gamma} and~\eqref{eq:Yk}.

In Figure~\ref{fig:kfixed}, we chose different values of $k$ and optimized,
w.r.t $\kappa$ and $\gamma$ for every value of $\theta$, the theoretical
gain~\eqref{e:gaink} computed by Monte-Carlo approximations. The plots are
presented in terms of $\theta$ (when $d=2$). For a fixed value of $k$, we
observe that when $\gamma$ and $\kappa$ are correctly adjusted the gain is
always positive for any $\theta$. Still, the choice of $k$ is very sensitive to
the value of $\theta$ and also needs to be optimized to reach the highest gain.
This has been done in Table~\ref{tab:sim}, in which we present a first
simulation study. We investigate the gains for different values of $d$ and $\theta$. For any
$\theta$ and $d$, we chose
\[
(k^\star,\gamma^\star,\kappa^\star)= \mathrm{argmax}_{(k,\gamma,\kappa)} \Gain(\widehat \theta_k).
\]
Our experience is that interesting choices for $k$ are values close to the
number of points, say $n$. Therefore, the optimization has been done for $k \in
\{\lfloor .75n \rfloor,\dots, \lfloor 1.2n\rfloor \}$. Such an optimization is
extremely fast. Using 50000 samples to approximate~\eqref{e:gaink} with the
help of Lemma~\ref{lem:gamma}, it takes less than two seconds to find the
optimal parameters when $d=3$ and $\theta=40$. The empirical results presented
in Table~\ref{tab:sim} are based on 50000 replications. We report the empirical
means, standard deviations, MSE for both the MLE and the "optimized" Stein
estimator and finally the empirical gain. The average number of points in
$\overline B(0,1)$ equals to $\theta|W|$ with $|W|=2,3.14,4.19$ approximately.  The
first three columns allow us to recover that the MLE is unbiased with variance
(and thus MSE) equal to $\theta/|W|$. Then, we observe that our Stein estimator
is always negatively biased. This can be seen from~\eqref{e:steinCh2} since the
optimal value $\gamma^\star$ is always negative. We point out that the standard
deviation is considerably reduced which enables us to obtain empirical
gain between 43\% and 48\% for the cases considered in the simulation.

\begin{figure}[htbp]
\begin{center}
\includegraphics[scale=.6]{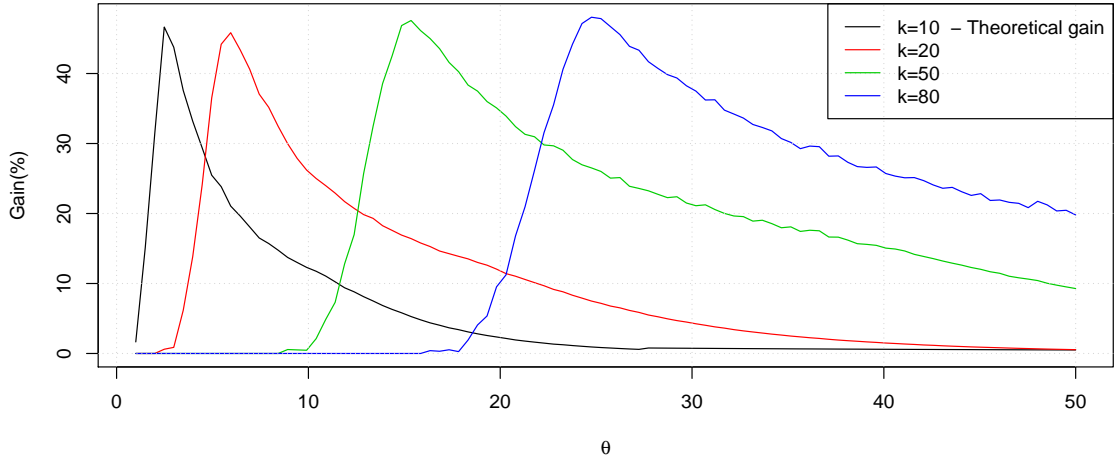}
\caption{\label{fig:kfixed}Monte-Carlo approximation of the theoretical gain
  for the exponential family~\eqref{e:ch2}. For each
  value of $k$ and $\theta$, the parameters $\kappa$ and $\gamma$ are chosen to
  optimize the theoretical gain. The  Monte-Carlo approximations
  of~(\ref{e:gain-ch2}) are based on $m=50000$ replications of Poisson point processes
  in the 2--dimensional euclidean ball $\overline{B}(0,1)$.}
\end{center}
\end{figure}

\begin{table}[ht]
\begin{center}
\begin{tabular}{r|rrr|rrrr|r}
  \hline
& \multicolumn{3}{|c}{\sc mle} &\multicolumn{4}{|c|}{\sc stein} &   Gain (\%) \\
& \multicolumn{1}{|r}{mean} & sd & \multicolumn{1}{r|}{mse} & $k^\star$ & mean & sd & \multicolumn{1}{r|}{mse} & \\
  \hline
$\theta=5$, $d=1$ & 5 & 1.6 & 2.52 & 11 & 4.4 & 1.0 & 1.44 & 43.0 \\
$d=2$ &  5 & 1.3 & 1.58 & 18 & 4.6 & 0.8 & 0.86 & 45.6 \\
  $d=3$ &  5 & 1.1 & 1.19 & 22 & 4.6 & 0.7 & 0.64 & 46.1 \\
  $\theta=10$, $d=1$ & 10 & 2.2 & 5.03 & 22 & 9.2 & 1.4 & 2.73 & 45.8 \\
  $d=2$ &10 & 1.8 & 3.18 & 34 & 9.4 & 1.2 & 1.72 & 46.0 \\
  $d=3$ & 10 & 1.5 & 2.37 & 44 & 9.5 & 1.0 & 1.27 & 46.3 \\
  $\theta=20$, $d=1$ &20 & 3.1 & 9.91 & 42 & 18.8 & 2.0 & 5.31 & 46.4 \\
  $d=2$ &20 & 2.5 & 6.38 & 66 & 19.1 & 1.6 & 3.41 & 46.5 \\
  $d=3$ &20 & 2.2 & 4.72 & 84 & 19.1 & 1.3 & 2.47 & 47.5 \\
 $\theta=40$, $d=1$  &40 & 4.5 & 20.09 & 84 & 38.5 & 2.9 & 10.61 & 47.2 \\
  $d=2$ & 40 & 3.6 & 12.79 & 125 & 38.6 & 2.2 & 6.78 & 46.9 \\
  $d=3$ & 40 & 3.1 & 9.58 & 169 & 38.8 & 1.9 & 4.95 & 48.3 \\
   \hline
\end{tabular}
\caption{\label{tab:sim} Empirical means, standard deviations (sd), mean squared errors (mse) of {\sc mle} estimators and {\sc stein} estimators of the intensity parameter of Poisson point processes observed in the $d$--dimensional euclidean ball $\overline{B}(0,1)$. The results are based on $m=50000$ replications. The Stein estimator is based on the exponential function. For each $\theta$ and $d$, the parameters $\kappa$, $\gamma$ and the $k-$th nearest--neighbor are optimized to maximize the theoretical gain. The column $k^\star$ reports the optimal nearest-neighbour. Finally, the last column reports the gain of mse in percentage of the Stein estimator with respect to the {\sc mle}.}
\end{center}
\end{table}

\subsection{Comparison with Privault-Réveillac's estimator in the case $d=1$}
\label{sec:PR}

In this section, we focus on the case $d=1$ and we aim at comparing the
performance of our estimator with the one proposed by
\cite{privault:reveillac:09} and denoted $\widehat \theta_{\PR}$ for the sake of
conciseness. As underlined previously, $\widehat \theta_{\PR}$ shares some
similarities with our first example. The main difference comes from the fact that,
even in the case $d=1$, the integration by parts formula obtained by
\citet[Proposition 3.3]{privault:reveillac:09} differs from ours (see
Theorem~\ref{thm:ipp}). Since our framework was to work with
$d$--dimensional Poisson point processes for any $d\geq 1$, we had to impose  different
compatibility conditions. To ease the
comparison with our estimator based on  $\bX$ defined on $\R$ and observed on
$W=[-1,1]$, we define $\widehat \theta_{\PR}$ based on the observation of $\bX$
on $\widetilde W=[0,2]$. Note that by stationarity,
$(\bX_W+1)\stackrel{d}{=}\bX_{\widetilde W}$ so both  estimators are based on
the same amount of information. Let $X_1$ be the point of $\bX$ closest to 0,
then $\widehat \theta_{\PR}$ is defined for some $\kappa>0$ by
\begin{equation*}
     \widehat\theta_{\PR} = \widehat \theta_{\MLE} + \frac2\kappa \1 (N(\widetilde W)=0) \; + \; \frac{2X_1}{2(1+\kappa)-X_1} \; \1( 0<X_1\leq 2).
  \end{equation*}
The mean squared error and the corresponding gain are given by
\begin{align*}
\MSE(\widehat\theta_{\PR} ) &= \MSE (\widehat \theta_{\MLE}) + \frac1{\kappa^2}\exp(-2\theta) - \E \left(
\frac{X_1}{2(1+\kappa)-X_1} \1 (0<X_1\leq 2) \right)
\end{align*}
\begin{equation}\label{eq:gainPR}
  \Gain (\widehat\theta_{\PR}) = \frac{2}{\theta \kappa^2} \exp(-2\theta) \; - \;
  \frac2\theta \E \left( \frac{X_1}{2(1+\kappa)-X_1} \1 (X_1\leq 2) \right).
\end{equation}
Note that $X_1 \sim \mathcal E(\theta)$. We took the same point of view as in the previous section and optimized the gain w.r.t. $\kappa$. The optimal gain reached by $\widehat \theta_{\PR}$ is presented in Figure~\ref{fig:gainPR}. As a summary of this curve, the optimal gain for $\theta=5,10,20$ and $40$ is equal to $4.4\%, 1.1\%, 0.2\%$ and $0.06\%$ respectively. The results are clear. Our Stein estimator, based on the exponential function and on the idea of picking the $k$-th closest point to $0$ instead of just the first one, considerably outperforms the estimator proposed by \cite{privault:reveillac:09}.

\begin{figure}[htbp]
\centering\includegraphics[scale=.6]{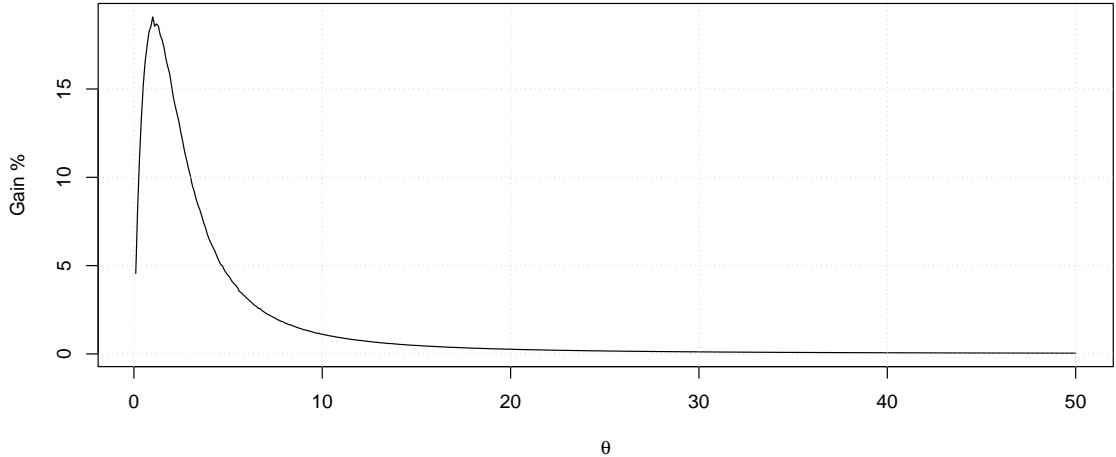}
\caption{\label{fig:gainPR} Monte-Carlo approximation of the theoretical  gain~\eqref{eq:gainPR} optimized in terms of $\kappa$ of the Stein estimator $\widehat \theta_{\PR}$ in terms of $\theta$.}
\end{figure}

\subsection{Data-driven Stein estimator}

Table~\ref{tab:sim} is somehow a theoretical table since the optimal parameters
$k^\star$, $\gamma^\star$ and $\kappa^\star$ are searched for given the value
of $\theta$, which is useless in practice since $\theta$ is unknown.  A
natural idea consists in first estimating the MLE and then look for the optimal
parameters given $\widehat\theta_{\MLE}$. Preliminary experiments have shown us
that this procedure can be quite varying when $d$ or $\theta$ are large. We
think that this is essentially due to the high variance of the MLE. To explain how
we can reduce this, let $f_{k,\gamma,\kappa}(\theta)= 16 \E
\cG(Y_{(k)})/(\theta d^2|W|)$. Instead of maximizing
$f_{k,\gamma,\kappa}(\widehat \theta_{\MLE})$, we suggest to maximize the
average gain for a range of values of $\theta$ and we fix this range as a
factor of the standard deviation of the MLE. More specifically, let
\[
  \Theta(\theta,\rho) = \left[ \theta - \rho  \sqrt{\theta/|W|},
\theta + \rho  \sqrt{\theta/|W|}
  \right].
\]
When $\rho=1$ (resp. $\rho=1.6449$, $\rho=1.96$),
$\Theta(\widehat{\theta}_{\MLE},\rho)$ corresponds to the confidence interval of
$\theta$ based on $\widehat \theta_{\MLE}$ with confidence level 68.3\% (resp.
$90\%$ and $95\%$). Then, we suggest to maximize
\begin{equation}
  \label{e:gainInt} \int_{\Theta(\widehat{\theta}_{\MLE},\rho)} f_{k,\gamma,\kappa}(\theta) \dd \theta = \frac{16}{d^2|W|}
  \E \int_{\Theta(\widehat\theta_{\MLE},\rho)} \frac{\cG( Y_{(k)})}{\theta} \dd \theta.
\end{equation}
In the following, we may write $Y_{(k)}(\theta)$ to emphasize
that the distribution of the random variable $Y_{(k)}$ depends on
the parameter $\theta$. Thus, we can rewrite~\eqref{e:gainInt} as
\begin{equation*}
   \int_{\Theta(\widehat{\theta}_{\MLE},\rho)} f_{k,\gamma,\kappa}(\theta) \dd
   \theta = \frac{16}{d^2|W|} 2 \rho \sqrt{ \widehat{\theta}_{\MLE}/ |W|}
   \E \left(\frac{\cG(Y_{(k)}(U))}{U}\right),
\end{equation*}
where $U$ is a random variable independent of $Y_{(k)}$ with uniform distribution over
$\Theta(\widehat{\theta}_{\MLE},\rho)$. Sampling $Y_{(k)}(U)$ is
performed in two steps: first, sample $U$ and second sample a Gamma distributed
random variable as explained in Lemma~\ref{lem:gamma}, in which the value of
$\theta$ is replaced by the current sample of $U$. Hence, optimizing this new
criteria basically boils down to the same kind of computations as for a fixed
value of $\theta$ without bringing in any extra computations.

Table~\ref{tab:sim2} reports the empirical results regarding this suggestion. As
the optimization procedure needs to be conducted for each replication, we only
considered 5000 replications. We report the results of the empirical gains of
the previous procedure for $\rho=0,1,1.6449$ and $1.96$, the value $0$ meaning
that we simply maximize $f_{k,\gamma,\kappa}(\widehat\theta_{\MLE})$.  Globally,
the empirical gains are slightly smaller than the ones obtained in
Table~\ref{tab:sim}. Yet, the results remain pretty impressive. When $\theta$
is equal to $5$ or $10$, optimizing \eqref{e:gainInt} with $\rho=1$ leads to
similar results as the previous ones. The value $\rho=1$ seems again to be a
good choice when $\theta=20$ while $\rho=1.6449$ is a good compromise when
$\theta=40$.

\begin{table}[ht]
\begin{center}
\begin{tabular}{r|rrrr}
  \hline
& \multicolumn{4}{c}{Gain (\%)}\\
& $\rho=0$ & $\rho=1$ & $\rho=1.6449$ & $\rho=1.96$ \\
  \hline
$\theta=5$, $d=1$ &48.8 & 47.9 & 36.4 & 30.1 \\
$d=2$ &  38.6 & 42.4 & 37.1 & 31.4 \\
  $d=3$ &39.4 & 42.6 & 37.0 & 31.7 \\
  $\theta=10$, $d=1$ &40.3 & 43.8 & 36.7 & 30.1 \\
  $d=2$ &36.2 & 38.8 & 33.7 & 27.9 \\
  $d=3$ &31.6 & 36.6 & 32.0 & 28.3 \\
  $\theta=20$, $d=1$ & 37.3 & 38.6 & 34.5 & 28.0 \\
  $d=2$ & 27.3 & 33.1 & 31.0 & 26.5 \\
  $d=3$ &20.8 & 28.6 & 28.1 & 23.8 \\
 $\theta=40$, $d=1$  & 22.3 & 30.8 & 29.2 & 23.9 \\
  $d=2$ &  16.3 & 24.0 & 28.2 & 24.4 \\
  $d=3$ & 12.7 & 19.0 & 24.5 & 22.0 \\
   \hline
\end{tabular}
\caption{\label{tab:sim2} Empirical gain of Stein estimators of the intensity parameter of Poisson point processes observed in the $d$--dimensional euclidean ball $\overline{B}(0,1)$. The results are based on $m=5000$ replications. The Stein estimator is based on the exponential function. For each replication, the parameters $k,\gamma$ and $\kappa$ maximize~\eqref{e:gainInt} for different values of $\rho$.}
\end{center}
\end{table}

\appendix

\section{Notation}

We introduce some additional notation required in the proofs. Let $m$ be a fixed integer. We denote by $\mathcal{D}(\mathcal{O},\R)$ the space of compactly supported functions which are infinitely differentiable on an open subset $\mathcal{O}$ of $\R^m$. If $\overline{\mathcal{O}}$ is a closed subset of $\R^m$, we define
\[
\mathcal{D}(\overline{\mathcal{O}},\R)=\big\{g:\,\exists \widetilde{g}\in
  \mathcal{D}(\R^m,\R)\mbox{ such that }g=\widetilde{g}_{|\overline{\mathcal{O}}}\big\}.
\]
The Sobolev spaces on an open subset $\mathcal{O}$ of $\R^m$ are defined by
\[
H^1(\mathcal{O},\R)=\left\{f\in L^2(\mathcal{O},\R): \,\forall 1 \le i \le m,\,\frac{\partial f}{\partial x_i}\in L^2(\mathcal{O},\R)\right\}
\]
with the norm defined for any $g \in H^1(\mathcal{O},\R)$ by
\[
  \|g\|_{H^1(\mathcal{O},\R)}=\|g\|_{L^2(\mathcal{O},\R)}+\sqrt{\sum_{i=1}^m\left\|\frac{\partial g}{\partial x_i}\right\|^2_{L^2(\mathcal{O},\R)}}
\]
where the partial derivatives have to be understood in the weak sense. The Sobolev spaces can also be defined on a closed subset $\overline{\mathcal{O}}$ of $\R^m$ as
\begin{equation}\label{eq:defSobClosed}
H^1(\overline{\mathcal{O}},\R)=\{f\in L^2(\overline{\mathcal{O}},\R): \, \exists g\in H^1(\R^m,\R)\mbox{ such that }g_{|\overline{\mathcal{O}}}=f\}.
\end{equation}
\section{Proof of Lemma~\ref{lem:deriv:closed}}\label{s:proof:lem:deriv}
\subsection{An explicit formula for the weak gradient of $\Psi(\|x_{(k),n}\|^2)$}

In this section, we consider form functions defined by
\[
  f_n(x_1,\dots,x_n)= \Psi(\|x_{(k),n}\|^2).
\]
Even if $\Psi\in \mathcal{C}^1(\R,\R)$, the function $f_n$ may not differentiable everywhere since the function $(x_1,\dots,x_n)\mapsto \|x_{(k),n}\|^2$ is non differentiable at points $(x_1,\dots,x_n)$ for which $\|x_i\|=\|x_j\|=\|x_{(k),n}\|$ for some $i \neq j$. Nevertheless, $(x_1,\dots,x_n)\mapsto \|x_{(k),n}\|^2$ is continuous on $(\R^d)^n$ and so is $f_n$. Then, we deduce in this section that $f_n$ admits partial {weak derivatives} (see Lemma~\ref{lem:deriv}). Our result is based on the following classical result concerning the existence
of weak derivatives for continuous functions (see e.g.\citet[Proposition 2.4 of Chapter 3]{zuily:02}).
\begin{lemma}\label{lem:jump}
Let $(a_n)_{n\in\N}$ be an increasing sequence of real numbers
such that $a_n\to \infty$ as $n\to \infty$. Set $a_0=-\infty$. For any $j\geq
0$, let $f_j : [a_j,a_{j+1}] \to \R$ be such that $f_j\in\mathcal{C}^1((a_j,a_{j+1}),\R),\,f'_j\in L^1((a_j,a_{j+1}),\R)$ and define $f$  by $f=\sum_{j\in\N} f_j\1_{[a_j,a_{j+1})}$. If the function $f$ is continuous on $\R$, then it admits a weak derivative, denoted $f'$, defined as the locally integrable function $f'= \sum_{j}f'_j\1_{(a_j,a_{j+1})}$.
\end{lemma}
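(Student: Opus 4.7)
The plan is to verify the defining identity of the weak derivative directly: for every test function $\varphi \in \mathcal{D}(\R,\R)$, I want to establish
\[
\int_{\R} h(x)\varphi(x)\dd x = -\int_{\R} f(x)\varphi'(x)\dd x, \quad h := \sum_j f_j'\,\1_{(a_j,a_{j+1})}.
\]
First I would check that $h$ is well defined as a locally integrable function: on any bounded interval $[-M,M]$ only finitely many intervals $(a_j,a_{j+1})$ intersect it (because $a_n\to\infty$ and the sequence is increasing), and on each intersection the contribution $f_j'$ is $L^1$ by assumption, so $h\in L^1_{\mathrm{loc}}(\R,\R)$. Next, using that $\mathrm{supp}(\varphi)$ is compact, there exists $J$ such that $\mathrm{supp}(\varphi)\subset(-\infty,a_{J+1})$; consequently both integrals above reduce to finite sums over $j\in\{0,1,\dots,J\}$, which removes any question of interchanging series and integration.

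Then, on each interval $[a_j,a_{j+1}]$ (interpreting $[a_0,a_1]$ as $(-\infty,a_1]$ for $j=0$), the function $f_j$ is continuous on the closed interval (by continuity of $f$, which forces $f_j$ to extend continuously to the endpoints) and $C^1$ on the open interval with $L^1$ derivative. Therefore $f_j$ is absolutely continuous there, and the standard integration by parts formula for an AC function against a $C^1$ test function yields
\[
\int_{a_j}^{a_{j+1}} f_j'(x)\varphi(x)\dd x = f_j(a_{j+1})\varphi(a_{j+1}) - f_j(a_j)\varphi(a_j) - \int_{a_j}^{a_{j+1}} f_j(x)\varphi'(x)\dd x.
\]
For $j=0$ the boundary term at $a_0=-\infty$ is to be read as $0$ since $\varphi$ vanishes there by compact support; similarly, the term at $a_{J+1}$ vanishes because $\varphi(a_{J+1})=0$. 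Summing over $j\in\{0,\dots,J\}$, the right-hand side of the identity I seek emerges apart from a telescoping boundary sum
\[
\sum_{j=1}^{J}\bigl[f_{j-1}(a_j)-f_j(a_j)\bigr]\varphi(a_j).
\]

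The continuity assumption on $f$ at each breakpoint $a_j$ forces $f_{j-1}(a_j)=f_j(a_j)$, so every term in this telescoping sum vanishes. Reassembling, I recover the desired identity, which proves that $h$ is indeed the weak derivative of $f$. The main obstacle—and really the only substantive point—is the rigorous handling of the integration by parts when the classical derivative $f_j'$ is merely $L^1$ rather than continuous up to the endpoints; this is resolved by invoking absolute continuity of $f_j$ on the closed interval (itself a consequence of the continuity of $f$ combined with $f_j'\in L^1$), after which the endpoint evaluations are unambiguous and the telescoping argument closes the proof cleanly.
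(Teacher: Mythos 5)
Your proof is correct. The paper itself does not prove Lemma~\ref{lem:jump}: it simply cites it as a classical result (Proposition~2.4 of Chapter~3 in \citet{zuily:02}), and the remark following the lemma alludes to the distributional jump formula, from which the statement follows once the continuity of $f$ annihilates the Dirac masses at the breakpoints. Your argument is exactly the direct, self-contained version of that: reduce to a finite sum using compact support, integrate by parts interval-by-interval (justified by absolute continuity of each piece $f_j$ on its closed interval, which itself follows from $f_j\in\mathcal{C}^1$ on the open interval with $f_j'\in L^1$, the FTC giving the extension to the endpoints), and observe that the telescoping boundary sum $\sum_{j=1}^{J}[f_{j-1}(a_j)-f_j(a_j)]\varphi(a_j)$ vanishes by the continuity hypothesis. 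The only small point worth being explicit about is that the continuity of $f$ is used in two places: once to identify the continuous extension of $f_{j-1}$ at $a_j$ with the prescribed value $f_j(a_j)=f(a_j)$, and once to make each $f_j$ absolutely continuous up to the endpoints so that the integration-by-parts formula and the endpoint evaluations are legitimate. You make both uses, so the argument is complete.
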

\begin{remark}
The continuity assumption on $f$ is crucial since if $f$ were discontinuous at
some points $a_i$, $f$ would not admit a weak derivative. Indeed, in this case
the usual {jump discontinuity formula} (see again~\citet[Proposition 2.4 of Chapter 3]{zuily:02}) would
imply that the derivative of $f$, {in the sense of distributions}, would be the
sum of a locally integrable function and some Dirac masses.
\end{remark}
\noindent We deduce the following result from Lemma~\ref{lem:jump}.
\begin{lemma} \label{lem:deriv}
Let $n\geq k\geq 1$ and let $\Psi:\R \to \R$ be a continuously differentiable function defined on $\R$. For any $j$, $f_n$ admits a weak gradient $\nabla_{x_j} f_n(x_1,\dots,x_n)$ w.r.t. $x_j$ and the following equality holds
\begin{equation}\label{e:equality:distrib}
\sum_{j=1}^n \nabla_{x_j} f_n(x_1,\dots,x_n) \cdot x_j=2\|x_{(k),n}\|^2  \Psi^\prime(\|x_{(k),n}\|^2), \; a.e.
\end{equation}
In addition, if $\Psi$ is compactly supported in $[-\sqrt{a},\sqrt{a}]$ for some $a>0$, the function $f_n$ belongs to $H^1((\overline{B}(0,a))^n,\R)$ and satisfies $\sup_n(\|f_n\|_{L^\infty(\R^n)}+\sum_i\|\nabla_{x_i}f_n\|_{L^\infty(\R^n)})<\infty$.
\end{lemma}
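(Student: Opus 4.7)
The strategy is to reduce the multidimensional weak-gradient computation to one-dimensional weak derivatives, exploiting that $f_n(x_1,\dots,x_n) = \Psi(\|x_{(k),n}\|^2)$ depends on each $x_j$ only through its norm $r_j = \|x_j\|$. First, I would fix an index $j \in \{1,\dots,n\}$ and a configuration of the remaining variables $(x_l)_{l \neq j}$ whose norms are pairwise distinct; this holds Lebesgue-almost everywhere on $(\R^d)^{n-1}$. Denote by $0 < \rho_1 < \dots < \rho_{n-1}$ the sorted values of $\{\|x_l\| : l \neq j\}$ and set $\rho_0 = 0$, $\rho_n = +\infty$. A case analysis on the position of $r_j$ relative to the $\rho_i$'s shows that the one-dimensional profile $r_j \mapsto f_n$ equals $\Psi(\rho_{k-1}^2)$ for $r_j \in [0,\rho_{k-1}]$, $\Psi(r_j^2)$ for $r_j \in [\rho_{k-1},\rho_k]$, and $\Psi(\rho_k^2)$ for $r_j \in [\rho_k,+\infty)$, since inserting $r_j$ into the sorted list of the other norms shifts the $k$-th smallest element accordingly.

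This one-dimensional profile is continuous (its values match at $r_j = \rho_{k-1}$ and $r_j = \rho_k$) and piecewise of class $\mathcal{C}^1$ with classical derivative in $L^1_{\mathrm{loc}}$, so Lemma~\ref{lem:jump} applies and yields the weak radial derivative $2 r_j \Psi'(r_j^2)\, \1(\rho_{k-1} < r_j < \rho_k)$. Since $f_n$ is radial in $x_j$ for fixed $(x_l)_{l \neq j}$, applying the chain rule for weak gradients to the smooth map $x_j \mapsto \|x_j\|^2$ on the open annulus $\{\rho_{k-1} < \|x_j\| < \rho_k\}$ and extending by zero (admissible thanks to the continuity of $f_n$) delivers the pointwise formula
\[
\nabla_{x_j} f_n(x_1,\dots,x_n) = 2 \Psi'(\|x_j\|^2)\, x_j\, \1(\rho_{k-1} < \|x_j\| < \rho_k).
\]
Equivalently, one may invoke Fubini together with the ACL (absolute continuity on lines) characterization of $H^1$, verifying the weak partial derivative with respect to each coordinate $x_{j,i}$ by the same one-dimensional argument.

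Taking the inner product with $x_j$ gives $\nabla_{x_j} f_n \cdot x_j = 2 \|x_j\|^2 \Psi'(\|x_j\|^2)\, \1(\rho_{k-1} < \|x_j\| < \rho_k)$, and the indicator equals $1$ exactly when $x_j$ is the $k$-th closest point to the origin in $\{x_1,\dots,x_n\}$, i.e.\ $x_j = x_{(k),n}$. Because the norms are a.e.\ pairwise distinct, this happens for a unique $j$, so the summation over $j$ collapses to a single term and yields~\eqref{e:equality:distrib}. For the uniform bounds when $\Psi$ is compactly supported in $[-\sqrt{a},\sqrt{a}]$, one has $\|f_n\|_{L^\infty} \leq \|\Psi\|_\infty$ and at each point $(x_1,\dots,x_n)$ only one of the partial gradients $\nabla_{x_i} f_n$ is nonzero, with magnitude bounded by $2 a^{1/4} \|\Psi'\|_\infty$; hence $\sum_i |\nabla_{x_i} f_n|$ is pointwise bounded independently of $n$. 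The $H^1$-membership on $(\overline{B}(0,a))^n$ then follows at once since this set has finite Lebesgue measure.

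The main technical obstacle is the passage from the one-dimensional weak derivative to the $d$-dimensional weak gradient without introducing spurious distributional contributions on the measure-zero ``equal-norm'' set $\{(x_1,\dots,x_n) : \exists i \neq j,\, \|x_i\| = \|x_j\| = \|x_{(k),n}\|\}$ where $(x_1,\dots,x_n) \mapsto \|x_{(k),n}\|^2$ fails to be classically differentiable. The crucial ingredient that rules out such contributions is the global continuity of this map: as the remark following Lemma~\ref{lem:jump} emphasizes, a jump discontinuity would introduce Dirac-mass terms in the distributional derivative that are not locally integrable, which would prevent the weak-derivative identification above.
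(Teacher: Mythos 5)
Your argument is essentially the paper's: both reduce the existence of the weak gradient to a one-dimensional statement via Lemma~\ref{lem:jump}, and both identify the global continuity of $f_n$ across the equal-norm set as the ingredient that rules out Dirac contributions. The one difference is your choice of one-dimensional reduction: you slice along the radial variable $r_j=\|x_j\|$, which gives a nicely explicit piecewise profile $\Psi(\rho_{k-1}^2)$, $\Psi(r_j^2)$, $\Psi(\rho_k^2)$, whereas the paper slices along a single scalar coordinate $x_i^{(\ell)}$ and applies Lemma~\ref{lem:jump} to that profile directly, with no chain rule needed. The radial route is the one step of yours that is left imprecise --- it is not automatic that a radial function whose radial profile has a weak derivative has a weak gradient in $\R^d$ (``extend by zero across the annulus boundary'' needs an integration-by-parts or ACL justification) --- but you explicitly offer the Fubini/ACL coordinate-wise argument as an equivalent route, and that is exactly what the paper does, so there is no real gap. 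Your treatment of the $H^1$ membership and of the $L^\infty$ bound is actually slightly cleaner than the paper's (no cutoff function $u$ needed); note only that what you prove is a pointwise bound on $\sum_i|\nabla_{x_i}f_n|$, which is the right object, while the paper's literal phrasing $\sum_i\|\nabla_{x_i}f_n\|_{L^\infty}$ would, by symmetry in $i$, grow linearly in $n$.
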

\begin{proof}
Define for any $j\in \{1,\dots,n\}$, the following set
\[
A_j=\{(x_1,\dots,x_n)\in (\R^d)^n: \,x_{(k),n}=x_j\mbox{ and } \|x_j\|\neq\|x_i\| \mbox{ for }j\neq i\}.
\]
Observe that on $\bigcup_j A_j$
\[
f_n(x_1,\dots,x_n)=\sum_j \Psi(\|x_j\|^2)\1_{A_j}(x_1,\dots,x_n).
\]
Hence, the everywhere differentiability of $\Psi$ implies that the function
$f_{n}$ is differentiable on $\bigcup_j A_j$. In addition, the usual
differentiability rules lead to
\begin{align*}
\nabla_{x_i}f_n(x_1,\dots,x_n)&=\sum_{j=1}^n \nabla_{x_i}[\Psi(\|x_j\|^2)\1_{A_j}(x_1,\dots,x_n)]\\
&=\nabla_{x_i}[\Psi(\|x_i\|^2)\1_{A_i}(x_1,\dots,x_n)]\\
&=2x_i \Psi'(\|x_i\|^2)\1_{A_i}(x_1,\dots,x_n)
\end{align*}
for any $i\in\{1,\dots,n\}$. By using Lemma~\ref{lem:jump}, we prove that $f_n$ admits a weak gradient w.r.t. $x_i$. In the following, we denote the coordinates of elements $x_i \in \R^d$ by $x_i = (x_i^{(1)}, \dots, x_i^{(d)})^\top$. For any $\ell=1,\dots,d$, define
\[
  f_{i,n}^{(\ell)}(x_i^{(\ell)})=f(x_1,\dots,x_{i-1},(x_i^{(1)},\dots,x_i^{(\ell)},\dots,x_i^{(d)}),\dots,x_n).
\]
Then, we deduce from the differentiability of $f_n$ on $\bigcup_j A_j$ that the function $f_{i,n}^{(\ell)}$ is differentiable at any point $x_i^{(\ell)}\in \R$, such that
$\|x_i\|\neq \|x_j\|$ for all $j \neq i$.
Since in addition $f_{i,n}^{(\ell)}$ is continuous on $\R$, we can apply Lemma~\ref{lem:jump} to deduce that $f_{i,n}^{(\ell)}$ admits a weak derivative defined as
\[
  [f_{i,n}^{(\ell)}]'(x_i^{(\ell)})=\frac{\partial}{\partial x_i^{(\ell)}}[\Psi(\|x_i\|^2)]\1_{A_i}(x_1,\dots,x_n)=2 x_i^{(\ell)} \Psi'(\|x_i\|^2)\1_{A_i}(x_1,\dots,x_n)
\]
which also means that for any $(i,\ell)$, $f_n$ admits a weak partial derivative w.r.t. $x_{i}^{(\ell)}$ defined by
\begin{equation}\label{e:deriv:interm}
  \frac{\partial f_n}{\partial x_i^{(\ell)}}=\frac{\partial}{\partial x_i^{(\ell)}}[\Psi(\|x_i\|^2)]\1_{A_i}(x_1,\dots,x_n)=2 x_i^{(\ell)} \Psi'(\|x_i\|^2)\1_{A_i}(x_1,\dots,x_n).
\end{equation}
We also deduce that
\[
  \sum_{\ell=1}^d\frac{\partial f_n}{\partial x_i^{(\ell)}}x_i^{\ell}= 2 \sum_{\ell=1}^d[x_i^{(\ell)}]^2 \Psi'(\|x_i\|^2)\1_{A_i}(x_1,\dots,x_n)=2\|x_i\|^2 \Psi'(\|x_i\|^2)\1_{A_i}(x_1,\dots,x_n),
\]
which, once combined with the definition of $\nabla_{x_i} f_n$, yields
\begin{equation}\label{e:deriv:interm:2}
  \sum_{i=1}^n\nabla_{x_i}f_n(x_1,\dots,x_n)\cdot x_i=\sum_{i=1}^n\sum_{\ell=1}^d\frac{\partial f_n}{\partial x_i^{(\ell)}}x_i^{(\ell)}= 2\sum_{i=1}^n\|x_i\|^2 \Psi'(\|x_i\|^2)\1_{A_i}(x_1,\dots,x_n).
\end{equation}
For any $i$ and any $(x_1,\dots,x_n)\in A_i$, $\|x_{(k),n}\|^2\Psi'(\|x_{(k),n}\|^2)= \|x_i\|^2\Psi'(\|x_i\|^2)$. So on $\cup_j A_j$ (which is a set of full measure)
\[
  \|x_{(k),n}\|^2\Psi'(\|x_{(k),n}\|^2)=\sum_{i=1}^n \|x_i\|^2\Psi'(\|x_i\|^2)\1_{A_i}(x_1,\dots,x_n).
\]
Equation~\eqref{e:equality:distrib} of Lemma~\ref{lem:deriv} follows from the last equation and from~\eqref{e:deriv:interm:2}. Furthermore, if $\Psi$ is compactly supported in $[-\sqrt{a},\sqrt{a}]$ then $f_n$ coincides  with
\linebreak $\widetilde{f}_n(x_1,\dots,x_n)=\Psi(\|x_{(k),n}\|)u(x_1,\dots,x_n)$ on $(\overline{B}(0,a))^n$
where $u$ is a compactly supported and infinitely differentiable function such that $u\equiv 1$ on $(\overline{B}(0,a))^n$ and $u\equiv 0$ on $(\R^d)^n\setminus(\overline{B}(0,2a))^n$. Using the smoothness of $u$ and the continuity of $f_n$, we deduce that $\widetilde{f}_n$ is also continuous on $(\R^d)^n$. Then, we get in
particular that $\widetilde{f}_n\in L^2((\R^d)^n,\R)$. In addition, $\Psi'$ is also compactly
supported and by the smoothness of $u$, we have for any $i,\ell$
\[
  \frac{\partial \widetilde{f}_n}{\partial x_i^{(\ell)}}=u\frac{\partial f_n}{\partial x_i^{(\ell)}}+ f_n\frac{\partial u}{\partial x_i^{(\ell)}}
\]
in the sense of weak partial derivatives. Using once more~\eqref{e:deriv:interm}, we deduce that for any
$(i,\ell)$, $\frac{\partial \widetilde{f}_n}{\partial x_i^{(\ell)}}$ also belongs to
$L^2((\R^d)^n)$. Hence $\widetilde{f}_n\in H^1((\R^d)^n)$.

Since on $(\overline{B}(0,a))^n$ $f_n$ coincides with the function $\widetilde{f}_n$, which belongs to \linebreak$H^1((\R^d)^n, \R)$,~we deduce that $f_n\in H^1((\overline{B}(0,a))^n,\R)$. The fact that $\sup_n\|f_n\|_{L^\infty(\R^n)}+\sum_i\|\nabla_{x_i}f_n\|_{L^\infty(\R^n)}<\infty$ also comes from~\eqref{e:deriv:interm}.
\end{proof}
\subsection{A density result for the form functions considered in Lemma~\ref{lem:deriv:closed}}
In the following lemma, we state a useful density result to approximate the form functions $f_n$ defined in Lemma~\ref{lem:deriv:closed}.
\begin{lemma}\label{lem:density:Psi}
Let $k\geq 1$ be a fixed integer and let $\Psi$ be a function belonging to $H^1([0,1],\R)$. Define $\Psi_n(x_1,\dots,x_n)=\Psi(1) \1(n<k) + \Psi(\|x_{(k),n}\|^2) \1(n\geq k)$.
Then, there exists a sequence $(f_{\ell,n})$ of symmetric functions of $\mathcal{C}^1(W^n,\R)$ such that\\
(i) For any $n,\ell\geq 0$, ${f_{\ell,n+1}}_{\big|x_{n+1}\in \partial W}\equiv f_{\ell,n}$.\\
(ii)There exists some $C>1$ such that for any $n,\ell\geq 0$, $\|f_{\ell,n}- \Psi_n\|_{H^1(W^n,\R)}\leq C^n/\ell$.
(iii)For each $\ell$, there exists some $C'_\ell>1$ such that for any $n\geq 0$
\[
\|f_{\ell,n}\|_{L^\infty(W^n,\R)}+\sum_i\|\nabla_{x_i}f_{\ell,n}\|_{L^\infty(W^n,\R)}\leq (C'_\ell)^n
\]
\end{lemma}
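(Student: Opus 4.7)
The plan is to write each $f_{\ell,n}$ as a composition $\Phi^{(\ell)} \circ h_n^{(\ell)}$, where $\Phi^{(\ell)}$ is a smooth approximation of $\Psi$ in $H^1([0,1],\R)$ and $h_n^{(\ell)}$ is a smooth, symmetric surrogate for the order-statistic map $(x_1,\dots,x_n)\mapsto \|x_{(k),n}\|^2$ that satisfies the boundary compatibility exactly. The main obstacle is the explicit construction of $h_n^{(\ell)}$: naive mollification in $(x_1,\dots,x_n)$ would destroy condition (i), so the smoothing must be tailored to respect the special role of $\partial W$.

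First, I would smooth $\Psi$. By density of $\mathcal{C}^\infty([0,1],\R)$ in $H^1([0,1],\R)$, one can choose $\Phi^{(\ell)} \in \mathcal{C}^1([0,1],\R)$ with $\|\Phi^{(\ell)} - \Psi\|_{H^1([0,1])} \leq 1/\ell$. By subtracting a smooth bump supported in a shrinking neighborhood of $t=1$, one can further arrange $\Phi^{(\ell)}(1) = \Psi(1)$ without deteriorating the $H^1$ rate.

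Second, I would build the surrogate $h_n^{(\epsilon)}$ through a layer-cake representation
\[
  \|x_{(k),n}\|^2 = \int_0^1 \1\bigl(\#\{i\le n\,:\,\|x_i\|^2 \le t\} < k\bigr)\,dt,
\]
replacing the inner and outer Heavisides by $\mathcal{C}^\infty$ approximations with scale $\epsilon$. The smoothings are shifted so that a coordinate $x_i$ with $\|x_i\|=1$ contributes exactly zero to the inner count for every $t$ in the relevant range, and the outer smoothing has support strictly inside $(-\epsilon,0]$. This design yields a $\mathcal{C}^1$ symmetric function $h_n^{(\epsilon)}: W^n \to [0,1]$ satisfying
\[
  h_{n+1}^{(\epsilon)}(x_1,\dots,x_n,x_{n+1})\big|_{\|x_{n+1}\|=1} = h_n^{(\epsilon)}(x_1,\dots,x_n) \quad (n \ge k),
\]
with the matching constant value $1$ at $n = k-1$, and whose gradient is $L^\infty$--bounded by a constant depending on $\epsilon$ and $n$ at controlled geometric rate.

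Third, I would set
\[
  f_{\ell,n}(x_1,\dots,x_n) = \Psi(1)\,\1(n<k) + \Phi^{(\ell)}\bigl(h_n^{(\epsilon_\ell)}(x_1,\dots,x_n)\bigr)\,\1(n\ge k),
\]
with $\epsilon_\ell \to 0$ chosen sufficiently small. Condition (i) holds by the compatibility of $h_n^{(\epsilon_\ell)}$ together with $\Phi^{(\ell)}(1) = \Psi(1)$. For (ii), the error $\|f_{\ell,n} - \Psi_n\|_{H^1(W^n)}$ splits into a ``$\Phi^{(\ell)}$ vs $\Psi$'' piece controlled by $\|\Phi^{(\ell)}-\Psi\|_{H^1([0,1])}$ times a volume factor $|W|^{n/2}$ and a density estimate for the smoothed order statistic (producing the $C^n/\ell$ term), plus an ``$h_n^{(\epsilon_\ell)}$ vs $\|x_{(k),n}\|^2$'' piece of size $O(\epsilon_\ell)$ times a similar volume factor, both absorbed into $C^n/\ell$ upon shrinking $\epsilon_\ell$ fast enough. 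Condition (iii) then follows from the $\mathcal{C}^1$ bounds on $\Phi^{(\ell)}$ combined with the chain rule and the $L^\infty$ gradient bounds on $h_n^{(\epsilon_\ell)}$, producing the geometric factor $(C'_\ell)^n$ at fixed $\ell$. The genuinely delicate step is the construction in Stage~2, particularly the verification that the $H^1$ error can be bounded uniformly in $n$ by geometric growth $C^n$ — this requires controlling the density of the smoothed order statistic on $W^n$ by comparison with the density of the true order statistic of i.i.d.\ uniform samples.
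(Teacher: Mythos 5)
Your plan is a genuinely different route from the paper's. The paper passes to the radial variables $r_i = \|x_i\|^2$, views the function $\Phi_n(r_1,\dots,r_n)=(\Psi(r_{(k),n})-\Psi(1))\1(n\ge k)$ as an element of $H^1((0,1)^n,\R)$, and then appeals to Proposition~\ref{pro:density:compatibility}, an inductive trace--lifting construction in Bessel--potential spaces $H^s_p$, to manufacture $\mathcal{C}^1$ approximants $g_{\ell,n}$ satisfying the boundary compatibility; the final $f_{\ell,n}$ are obtained by plugging back $r_i=\|x_i\|^2$. In particular, the paper never smooths the order--statistic map itself; all the regularity comes from abstract Sobolev-space machinery, with the boundary compatibility enforced at the level of traces. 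Your proposal instead separates the problem into smoothing the \emph{scalar} function $\Psi$ and building an explicit $\mathcal{C}^1$ surrogate $h_n^{(\epsilon)}$ for $(x_1,\dots,x_n)\mapsto\|x_{(k),n}\|^2$ via a layer-cake integral of smoothed Heavisides, and then composing. This is a concrete, hands-on alternative which avoids the Bessel-space apparatus entirely.

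However, as written there is a genuine gap in Stage~2 that your sketch papers over. You assert that $\nabla h_n^{(\epsilon)}$ is ``$L^\infty$--bounded by a constant depending on $\epsilon$ and $n$,'' which will not suffice for condition~(ii). If the bound on $\nabla h_n^{(\epsilon)}$ degenerates like $1/\epsilon$, then on the near-tie set (where $\|x_{(k),n}\|^2$ and a neighbouring radius differ by $O(\epsilon)$, a set of Lebesgue measure $O(\epsilon)$ in the relevant slices) the $L^2$ error of the gradients scales like $\sqrt{\epsilon}\cdot(1/\epsilon)=1/\sqrt{\epsilon}$, which diverges as $\epsilon\to 0$ and destroys $H^1$ convergence for fixed $\ell$. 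What saves the construction, but which you must prove, is that if \emph{both} Heaviside approximations are chosen monotone non-increasing (so $\chi'_\epsilon\le 0$ and $G'_\epsilon\le 0$), then each radial partial derivative is non-negative, $\partial_{r_i}h_n^{(\epsilon)}=\int_0^1 G'_\epsilon(\cdot)\,\chi'_\epsilon(r_i-t)\,dt\ge 0$, while a telescoping identity gives $\sum_i\partial_{r_i}h_n^{(\epsilon)}=G_\epsilon(m(0))-G_\epsilon(m(1))\in[0,1]$; hence each $\partial_{r_i}h_n^{(\epsilon)}\in[0,1]$ and $|\nabla_{x_i}h_n^{(\epsilon)}|\le 2$, \emph{uniformly in $\epsilon$ and $n$}. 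Only with this uniform bound does the $L^2$ gradient comparison close. Beyond this, two further items need an argument: (a) the density bound for the order statistic that you invoke to control $\|\bigl((\Phi^{(\ell)})'-\Psi'\bigr)\circ h_n^{(\epsilon_\ell)}\|_{L^2(W^n)}$ must be made quantitative and shown to be at worst geometric in $n$ — you mention this but do not supply it; and (b) your ``$O(\epsilon_\ell)$'' rate for the $h_n^{(\epsilon_\ell)}$ vs.\ order-statistic piece implicitly uses a Lipschitz constant for $\Phi^{(\ell)}$ that grows with $\ell$, so $\epsilon_\ell$ must be chosen \emph{after} $\Phi^{(\ell)}$ and small enough to beat both this Lipschitz constant and the modulus of continuity of $(\Phi^{(\ell)})'$; this is consistent with your plan but needs to be stated explicitly to keep the constant $C$ in~(ii) independent of $\ell$.
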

\begin{remark} The notation $\Psi(1)$ makes sense if $\Psi\in H^1([0,1],\R)$ since
the usual Sobolev injection yields that $H^1([0,1],\R)\hookrightarrow
\mathcal{C}^0([0,1],\R)$.
\end{remark} 
\begin{proof}
The case $d=1$ would deserve a particular treatment, but as it can be easily
adapted from the case $d \ge 2$, we only handle the latter one. For any $n\geq 1$, and any $(r_1,\dots,r_n)\in (0,1)^n$ define by
induction
\begin{align*}
r_{(1),n}&=\min(r_1,\dots,r_n)\\
r_{(k),n}&=\min\{r\in \{r_1,\dots,r_n\}\setminus\{r_{(1),n},\dots,r_{(k-1),n}\}\}\;\mbox{ for }2\leq k\leq n.
\end{align*}
Then fix $k\geq 1$ and define the following functions $\Phi_n$ on $[0,1]^n$ for any $n\geq 0$ by $\Phi_n(r_1,\dots,r_n)=
  ( \Psi(r_{(k),n})-\Psi(1) ) \; \1(n\geq k)$. In particular $\Phi_0\equiv 0$. We observe that
\[
\Psi_n(x_1,\dots,x_n)=\Phi_n(\|x_1\|^2,\dots,\|x_n\|^2)+\Psi(1)\mbox{ on }W^n.
\]
If we approximate $\Phi_n$ by density using
Proposition~\ref{pro:density:compatibility},
Lemma~\ref{lem:density:Psi} will be deduced using a change of variables in polar coordinates justified in the sequel.

Since Proposition~\ref{pro:density:compatibility} applies to a sequence of functions defined on $\R^n$, we need at first to extend each function $\Phi_n$ on $\R^n$ into a function $\widetilde{\Phi}_n$ belonging to $H^1(\R^n,\R)$ and satisfying $\|\widetilde{\Phi}_n\|_{H^1(\R^n,\R)}\leq B^n$ for some $B>1$. This will be possible since for any $n$, $\Phi_n\in H^1((0,1)^n,\R)$. Hereafter, we prove that the new sequence $(\widetilde{\Phi}_n)$ satisfies the assumptions (a)--(c) of
Proposition~\ref{pro:density:compatibility} with $a=1$.

\noindent{\it Step 1: we prove that for any $n$, $\Phi_n\in H^1((0,1)^n,\R)$}. We focus on the case $n\geq k$ as the case $n < k$ is obvious. Since $\Psi\in
H^1((0,1),\R)\subset L^2((0,1),\R)$
\begin{align*}
\int_{(0,1)^n}|\Phi_n(r_1,\dots,r_n)|^2 \dd r_1\dots \dd r_n&=\int_{(0,1)^n}|\Psi(r_{(k),n})-\Psi(1)|^2 \dd r_1\dots \dd r_n\\
&=\sum_{i=1}^n\int_{\begin{subarray}{c}
(r_1,\dots,r_n)\in
  (0,1)^n \\
  r_i=r_{(k),n}
  \end{subarray}}|\Psi(r_i)-\Psi(1)|^2\dd r_1\dots \dd r_n\\
&\leq 2\left[\sum_{i=1}^n\int_{0}^1(|\Psi(r_i)|^2+|\Psi(1)|^2)
  \dd r_i\right]\\
&\quad \times \int_{(r_1,\dots,r_{i-1}, r_{i+1},\dots r_n)\in
    (0,1)^{n-1}}\hspace*{-1cm}\dd r_1\dots \dd r_{i-1} \dd r_{i+1}\dots \dd r_n \\
  &\leq  2n(\|\Psi\|_{L^2((0,1),\R)}^2 + |\Psi(1)|^2)<\infty
\end{align*}
whereby we deduce that $\Phi_n\in L^2((0,1)^n,\R)$ for any $n\geq 0$.

To deduce that each $\Phi_n$ belongs to $H^1((0,1)^n,\R)$, we prove that for any $n\geq 0$, $\Phi_n\in L^2((0,1)^n,\R)$ admits partial
derivatives w.r.t. $r_i$ for any $i$ and that these partial derivatives are all square integrable. Since $\Psi\in H^1((0,1),\R)$, $\Psi$ is continuous on $[0,1]$ as well as $(r_1,\dots,r_n)\mapsto r_{(k),n}$. Hence, $\Phi_n$ is continuous on $W^n$.
By applying Lemma~\ref{lem:jump} to the functions $\Phi_n^{(i)}:r_i\mapsto \Phi_n(r_1,\dots,r_i,\dots,r_n)$, we deduce that for any $(n,i)$, $\Phi_n$ admits a weak derivative w.r.t. $r_i$ and that for a.e. $(r_1,\dots,r_n)$
\[
\frac{\partial \Phi_n(r_1,\dots,r_n)}{\partial r_i}=2r_{(k),n}\Psi'(r_{(k),n})\1_{r_{(k),n}=r_i}(r_1,\dots,r_n).
\]
Since $\Psi\in H^1((0,1),\R)$ we have for  any $i\in\{1,\dots,n\}$
\begin{align*}
\int_{(0,1)^n}\left|\frac{\partial \Phi_n(r_1,\dots,r_n)}{\partial
  r_i}\right|^2 \dd r_1\dots \dd r_n&\leq
\sum_{i=1}^n\int_{0}^1\left|\frac{\partial \Psi(r_i)}{\partial
  r_i}\right|^2\times 2 r_i \dd r_i\\
&\leq 2\sum_{i=1}^n\int_{0}^1\left|\frac{\partial \Psi(r_i)}{\partial
  r_i}\right|^2 \dd r_i<\infty,
\end{align*}
which shows that $\Phi_n\in H^1((0,1)^n,\R)$ and satisfies $\|\Phi_n\|_{H^1((0,1)^n,\R)}\leq nA$ for some $A>0$\\

\noindent{\it Step~2: extension of $\widetilde{\Phi}_n$ and properties.} Let us define the sequence $\widetilde{\Phi}_n$ as follows
\[
\widetilde{\Phi}_n=
\begin{cases}
  \widetilde{\Phi}_n(r_1,\dots,r_n)=\Phi_n(|r_1|,\dots,|r_n|) & \mbox{ if }(r_1,\dots,r_n)\in (-1,1)^n\\
  \widetilde{\Phi}_n(r_1,\dots,r_n)=0 & \mbox{ otherwise.}
\end{cases}
\]
We prove that this sequence satisfies the assumptions of Proposition~\ref{pro:density:compatibility}. First, Assumption (a) is clearly satisfied. Second, since by Step~1, $\Phi_n\in H^1((0,1)^n,\R)$ for any $n$ and is symmetric, it is also clear from the definition of the sequence $\widetilde{\Phi}_n$ that these functions all belong to $H^1(\R^n,\R)$ with $\|\widetilde{\Phi}_n\|_{H^1(\R^n,\R)}=\|\Phi_n\|_{H^1((0,1)^n,\R)}$ and are also symmetric. This proves Assumption (b). Third, we can check that Assumption (c) is satisfied for the functions $(\Phi_n)$. If $r_{n+1}=1$ and $(r_1,\dots,r_n)\in
(0,1)^n$, $r_{(k),n+1}=r_{(k),n}$, which implies that for any $n$ ${\Phi_{n+1}}_{\big| r_{n+1}=1}\equiv \Phi_n$. The case  $n<k-1$ is also clear. When $n=k-1$, $r_{k}=1$ so $r_{(k),k}=1$ and $\Psi(r_{(k),k})-\Psi(1)=0$. Then, we deduce that Assumption (c) is satisfied by the sequence $(\Phi_n)$. By definition of the sequence $\widetilde{\Phi}_n$, it is obvious that Assumption (c) is also satisfied for the functions $\widetilde{\Phi}_n$.\\

\noindent{\it Step 3: construction of the sequences $(f_{\ell,n})$ to approximate the functions $\Phi_n$}. By Step 2, Proposition~\ref{pro:density:compatibility} can be applied to the sequence
$(\widetilde{\Phi}_n)$ with $a=1$. We deduce
the existence of a sequence $g_{\ell,n}$ of symmetric functions on $\R^n$ satisfying (A)-(D) of Proposition~\ref{pro:density:compatibility}. To conclude, we define the sequence of symmetric functions $f_{\ell,n}$ on $W^n$ by $f_{\ell,n}(x_1,\dots,x_n)=g_{\ell,n}(\|x_1\|^2,\dots,\|x_n\|^2)+\Psi(1)$
and check that the desired result holds. The compatibility relations are
clearly satisfied. Furthermore, since for any $(\ell,n)$,
$g_{\ell,n}\in\mathcal{C}^1(\R^n,\R)$, then
$f_{\ell,n}\in\mathcal{C}^1(W^n,\R)$. In addition
\begin{align*}
\|f_{\ell,n}\|_{L^\infty(W^n,\R)}&=\|g_{\ell,n}\|_{L^\infty((0,1)^n,\R)},\\
\|\nabla_{x_i}f_{\ell,n}\|_{L^\infty(W^n,\R)}&=\|2r_i\partial_i g_{\ell,n}\|_{L^\infty((0,1)^n,\R)}\leq 2\|\partial_i g_{\ell,n}\|_{L^\infty((0,1)^n,\R)}.
\end{align*}
Then, we easily deduce (iii) from property (B) satisfied by the sequence $g_{\ell,n}$.

To achieve the proof of Lemma~\ref{lem:density:Psi}, we prove that  $\|f_{\ell,n}- \Psi_n\|_{H^1(W^n,\R)}\leq C^n/\ell$ for some $C>1$. For any $i$ and any $x_i\in W$, let
$r_i=\|x_i\|$, $\Theta_i={x_i}/{\|x_i\|}$, $u_i=r_i^2$ and $\mathbb{S}^{d-1}=\{x\in \R^d,\,\|x\|=1\}$. We have
\begin{align*}
\int_{W^n}|&f_{\ell,n}(x_1,\dots,x_n)-\Psi_n(x_1,\dots,x_n)|^2\,\dd x_1\dots
\dd x_n\\
=&|\mathbb S^{d-1}|^n\left[\int_{(0,1)^n}|g_{\ell,n}(u_1,\dots,u_n)-\Phi_n(u_1,\dots,u_n)|^2\,\prod_{i=1}^n
  (u_i)^{(d-1)/2-1/2}\dd u_1\dots \dd u_n\right].
\end{align*}
Since $d\geq 2$ and $H^1((0,1)^n,\R)\hookrightarrow L^2((0,1)^n,\R)$,
\begin{align*}
\int_{(0,1)^n}|&g_{\ell,n}(u_1,\dots,u_n)-\Phi_n(u_1,\dots,u_n)|^2\,\prod_{i=1}^n
(u_i)^{(d-1)/2-1/2}\dd u_1\dots \dd u_n\\
&=\int_{(0,1)^n}|g_{\ell,n}(u_1,\dots,u_n)-\Phi_n(u_1,\dots,u_n)|^2\prod_{i=1}^n (u_i)^{d/2-1} \dd u_1\dots \dd u_n\\
&\leq \int_{(0,1)^n}|g_{\ell,n}(u_1,\dots,u_n)-\Phi_n(u_1,\dots,u_n)|^2\dd u_1\dots
\dd u_n
\end{align*}
the last display coming from the fact that $d/2-1\geq 0$. It implies that $\|f_{\ell,n}-\Psi_n\|_{L^2(W^n,\R)}\leq |\mathbb S^{d-1}|^n\|g_{\ell,n}-\Phi_n\|_{L^2((0,1)^n,\R)}$. We also note that for any $i$ and any\linebreak $(x_1,\dots,x_n)\in W^n$,
$\nabla_{x_i} f(x_1,\dots,x_n)=2x_i\cdot g(\|x_1\|^2,\dots,\|x_n\|^2)$.
The same change of variables as above yields that
$\|\nabla_{x_i} f_{\ell,n}-  \nabla_{x_i}\Psi_n\|_{L^2(W^n,\R)}\leq |\mathbb S^{d-1}|^n\|\nabla_{r_i} g_{\ell,n}-\nabla_{r_i}\Phi_n\|_{L^2((0,1)^n,\R)}$. The point~(ii) is therefore deduced.
\end{proof}

\subsection{Proof of Lemma~\ref{lem:deriv:closed}}

Following Section~\ref{sec:duality}, we extend the closable operator $\nabla : \dom(D) \to L^2(\Omega)$ to
\[
\dom(\overline{D})=\{F\in L^2(\Omega): \,\exists (F_\ell)\in \dom(D),\,\nabla F_\ell\mbox{ converges in }L^2(\Omega)\}.
\]
Now, we apply Lemma~\ref{lem:density:Psi} and choose a sequence
$(f_{\ell,n})\in \mathcal{C}^1(W^n,\R)$ satisfying the compatibility relations
and point~(ii),(iii) of Lemma~\ref{lem:density:Psi}. Let us
define $(F_\ell)_\ell$ the sequence of elements in $L^2(\Omega)$, which admit $(f_{\ell,n})$ as
form functions. We check that this sequence satisfies the following properties:
(i) For any $\ell$, $F_\ell\in \dom(D)$; (ii) $F_\ell \to F$ in $L^2(\Omega)$;
(iii) $\nabla F_\ell$ converges in $L^2(\Omega)$ to some $G_k$.

The property (i) is deduced from the compatibility relations since \linebreak$(f_{\ell,n}) \in \mathcal{C}^1(W^n,\R)$ and point (iii) of Lemma~\ref{lem:density:Psi} holds. (ii) Since the functions $(f_{\ell,n})$ are given by Lemma~\ref{lem:density:Psi}, we have for any $n,\ell\geq 0$, $\|f_{\ell,n}- \Psi_n\|_{L^2(W^n,\R)}\leq C^n/\ell$ and $\|\nabla_{x_i}f_{\ell,n}- \nabla_{x_i}\Psi_n\|_{L^2(W^n,\R)}\leq C^n/\ell$ for some $C>1$. Since for any $C>1$, $\ell^{-1}\sum_n C^n/n! \to 0$ as $\ell\to \infty$, Lemma~\ref{lem:cv} yields the result.

Using the definition of $\nabla F_\ell$, we also have  that $F_\ell\to F\mbox{ in }L^2(\Omega)$ and
$\nabla F_\ell\to G_k\mbox{ in }L^2(\Omega)$
where
\[
G_k=2\sum_{n\geq k}\1(N(W)=n)\,\sum_{i=1}^n\nabla_{x_i}\Phi_n\cdot x_i.
\]
From Lemma~\ref{lem:1}, we have the explicit expression of $G_k$ depending only on $\Psi$:
\begin{equation}\label{e:cond3}
G_k=2\sum_{n\geq k}\1(N(W)=n)\,\|x_{(k),n}\|^2\Psi'(\|x_{(k),n}\|^2).
\end{equation}
The results (i)-(iii) combined with~\eqref{e:cond3} precisely ensure that $F\in \dom(\overline{D})$ and that
$\overline{\nabla} F=G_k$.


\section{Auxiliary density results}\label{s:density}
\subsection{Proof of Lemma~\ref{lem:density-dom}}\label{s:density-dom}

Observe that since ${\mathcal C}^1(W^n,\R)$ is dense in $L^2(W^n,\R)$, $\mathcal S^\prime$ is also dense in $L^2(\Omega)$ from Lemma~\ref{lem:cv}. Then, let us fix $F\in L^2(\Omega)$ and choose $(F_\ell)_\ell \in \mathcal S^\prime$ such that $F_\ell\to F$ in $L^2(\Omega)$. Denote for any $\ell$, $(f_{\ell,n})$ the form functions of the functional $F_\ell$. Since for any $\ell$ $F_\ell\in \mathcal S^\prime$, there exist some $C_\ell>1$ for each $\ell$ such that for any $n$
\begin{equation}\label{e:fn-ell}
\|f_{\ell,n}\|_{L^\infty(W^n,\R)}+\sum_{i=1}^n\|\nabla_{x_i}f_{\ell,n}\|_{L^\infty(W^n,\R^d)}\leq C_\ell^{n}.
\end{equation}
We modify each form function $f_{\ell,n}$ on $W^{n-1}\times \partial W$ to get new form functions $h_{n,\ell}$ related to some functional $H_\ell\in \mathcal S^\prime$ also converging to $F$ in $L^2(\Omega)$. Since the topology involved in the convergence in $L^2(\Omega)$ is the $L^2(W^n)$ convergence, to do so we will modify the functions $x_n\mapsto f_{\ell,n}(\cdot,x_n)$ in a neighborhood of the boundary of $\partial W$ without changing the convergence properties.

Since $W$ has a $\mathcal{C}^2$ boundary, the function $g:x\in W\mapsto d(x,W^c)$ is also $\mathcal{C}^2$ for $d(x,W^c)\leq \varepsilon_0$ for some $\varepsilon_0>0$. In particular, for some $M>1$
\begin{equation}\label{e:dist}
\|g\|_{L^\infty(W,\R)}+\|\nabla g\|_{L^\infty(W,\R^d)}\leq M.
\end{equation}
Now, we define the sequences $(h_{\ell,n})_\ell$ by induction on $n\geq 0$. At first, we set $h_{\ell,0}=f_{\ell,0}$. Then, for each $\ell$, consider $\varepsilon_{\ell,1}\in (0,\varepsilon_0)$ such that
\begin{equation}\label{e:fn-ell-L2}
\int_{ \{z\in W:d(z,W^c)\leq \varepsilon_{\ell,1}\} }[\left|h_{\ell,0}\right|^2+\left|f_{\ell,1}(z)\right|^2\,]\dd z\leq \frac{1}{2\ell^2}\left(\int_{z\in W}\left|f_{\ell,1}(z)\right|^2 \dd z\right)
\end{equation}
and define $h_{\ell,1}$ on $W$ by
\[
h_{\ell,1}(z)=
\begin{cases}
 f_{\ell,1}(z) & \mbox{ if }d(z,W^c)\geq \varepsilon_{\ell,1}\\
\frac{d(z,W^c)}{\varepsilon_{\ell,1}}f_{\ell,1}(z)+(1-\frac{d(z,W^c)}{\varepsilon_{\ell,1}})h_{\ell,0}& \mbox{ otherwise.}
\end{cases}
\]
Since $d(z,W^c)=0$ for $z\in \partial W$, we have  $h_{\ell,1}|_{\partial W}\equiv h_{\ell,0}$ for each $\ell$. Using~\eqref{e:fn-ell-L2}, we also check that $\|h_{\ell,1}-f_{\ell,1}\|_{L^2(W,\R)}\leq \|f_{\ell,1}\|_{L^2(W,\R)}/\ell$. Furthermore, \eqref{e:dist} and \eqref{e:fn-ell} imply that for any $\ell$
\[
\|h_{\ell,1}\|_{L^\infty(W,\R)}+\sum_{i=1}^n\|\nabla_{x_i}h_{\ell,1}\|_{L^\infty(W,\R^d)}\leq MC_\ell.
\]
Assume that we have defined the sequences of functions $(h_{\ell,1}),\cdots,(h_{\ell,N})$ such that
\begin{itemize}
\item (H1) for each $\ell\geq 1$ and $0\leq n\leq N-1$, $h_{\ell,n+1}\equiv h_{\ell,n}$ if $z_{n+1}\in \partial W$.
\item (H2) for each $\ell\geq 1$ and $0\leq n\leq N$, $\|h_{\ell,n}-f_{\ell,n}\|_{L^2(W^n,\R)}\leq \|f_{\ell,n}\|_{L^2(W^n,\R)}/\ell$.
\item (H3) for each $\ell\geq 1$ and $0\leq n\leq N$, $\|h_{\ell,n}\|_{L^\infty(W^n,\R)}+\sum_{i=1}^n\|\nabla_{x_i}h_{\ell,n}\|_{L^\infty(W^n,\R^d)}\leq (MC_\ell)^n$.
\end{itemize}
Let $\Gamma_{\ell,N+1}$ denote the set $\Gamma_{\ell,N+1}=\{(z_1,\cdots,z_{N+1})\in W^{N+1}:d(z_{N+1},W^c)\leq \varepsilon_{\ell,N+1}\}$
To define the sequence $(h_{\ell,N+1})$, we consider, for each $\ell$, some $\varepsilon_{\ell,N+1}\in (0,\varepsilon_0)$ such that
\begin{align}
\int_{\Gamma_{\ell,N+1}} & [\left|h_{\ell,N}(z_1,\cdots,z_{N})\right|^2+\left|f_{\ell,N+1}(z_1,\cdots,z_{N+1})\right|^2]\,\dd z_1\cdots \dd z_{N+1}\nonumber\\
&\leq \frac{1}{2\ell^2}\int_{W^{N+1}}\left|f_{\ell,N+1}(z_1,\cdots,z_{N+1})\right|^2\,\dd z_1\cdots \dd z_{N+1}.\label{e:P-eps}
\end{align}
Then, we define  $h_{\ell,N+1}$ on $W^{N+1}$ by
\begin{align*}
  &h_{\ell,N+1}(z_1,\cdots,z_{N+1})= \\
  &\begin{cases}
    f_{\ell,N+1}(z_1,\cdots,z_{N+1}) &\mbox{on } \Gamma_{\ell,N+1}\\
    \frac{d(z_{N+1},W^c)}{\varepsilon_{\ell,N+1}}f_{\ell,N+1}(z_1,\cdots,z_{N+1})+(1-\frac{d(z_{N+1},W^c)}{\varepsilon_{\ell,N+1}})h_{\ell,N}(z_1,\cdots,z_{N}) &\mbox{ otherwise}
  \end{cases}
\end{align*}
and check that (H1)--(H3) hold. By construction (H1) is valid. Regarding~(H2), from~\eqref{e:P-eps}
\begin{align*}
&\|h_{\ell,N+1}-f_{\ell,N+1}\|_{L^2(W^{N+1})}^2=\int_{\Gamma_{\ell,N+1}}|h_{\ell,N+1}-f_{\ell,N+1}|^2 \dd z_1\cdots \dd z_{N+1}\\
&\leq \int_{\Gamma_{\ell,N+1}}\!\!\left(1-\frac{d(z_{N+1},W^c)}{\varepsilon_{\ell,N+1}}\right)^2|h_{\ell,N}(z_1,\cdots,z_N)-f_{\ell,N+1}(z_1,\cdots,z_{N+1})|^2\dd z_1\cdots \dd z_{N+1}\\
&\leq 2\int_{\Gamma_{\ell,N+1}}|f_{\ell,N+1}(z_1,\cdots,z_{N+1})|^2 \dd z_1\cdots \dd z_{N+1}\\
&+2\int_{\Gamma_{\ell,N+1}}\left(1-\frac{d(z_{N+1},W^c)}{\varepsilon_{\ell,N+1}}\right)|h_{\ell,N}(z_1,\cdots,z_N)|^2 \dd z_1\cdots \dd z_{N+1}\\
&\leq \frac{1}{\ell^2}\left(\int_{W^{N+1}}\left|f_{\ell,N+1}(z_1,\cdots,z_{N+1})\right|^2\, \dd z_1\cdots \dd z_{N+1}\right).\\
\end{align*}
To check (H3), note that from \eqref{e:fn-ell} and (H3) we have
\begin{align*}
\|h_{\ell,N+1}\|_{L^\infty(W^{N+1},\R)}&+\sum_{i=1}^n\|\nabla_{x_i}h_{\ell,N+1}\|_{L^\infty(W^{N+1},\R^d)}\\
&\leq \max\left(\|f_{\ell,N+1}\|_{L^\infty(W^{N+1},\R)}+\sum_i\|\nabla_{x_i}f_{\ell,N+1}\|_{L^\infty(W^{N+1},\R^d)} \right. \\
  & \left. \phantom{\leq \max} \quad \vphantom{\leq \max\|f_{\ell,N+1}\|_{L^\infty(W^{N+1},\R)}+\sum_{i=1}^n\|\nabla_{x_i}f_{\ell,N+1}\|_{L^\infty(W^{N+1},\R^d)}},
  M(\|h_{\ell,N}\|_{L^\infty(W^{N},\R)}+
\sum_i\|\nabla_{x_i}h_{\ell,N}\|_{L^\infty(W^{N},\R^d)})\right)\\
&\leq (MC_\ell)^{N+1}.
\end{align*}

To conclude, let $H_\ell$ be the functional with form functions $h_{\ell,n}$. Since (H1) and (H3) are satisfied, $H_\ell\in\dom(D^\pi)$ for each $\ell$. Finally from (H2), we obtain that $\|H_\ell-F_\ell\|_{L^2(\Omega)}\leq \|F_\ell\|_{L^2(\Omega)}/\ell$, which combined with the convergence of $F_\ell$ to $F$ yields that $H_\ell\to F$ in $L^2(\Omega)$.

\subsection{Density results used in the proof of Lemma~\ref{lem:deriv:closed}}
In this section, we state some  density results used in the
  proof of Lemma~\ref{lem:deriv:closed} and more precisely in the proof of Lemma~\ref{lem:density:Psi}.
Let $p\in (1,+\infty)$ and $p'$ such that $1/p+1/p'=1$. We introduce the so--called Bessel--potential spaces defined for $s>0$  by
\[
H^s_p(\R^n,\R)=\{u\in \mathcal{S}'(\R^n,\R),\,(1+|\xi|^2)^{s/2}\widehat{u}\in L^{p'}(\R^n,\R)\}
\]
endowed with the norm $\|u\|_{H^s_p(\R^n,\R)}=\|(1+|\xi|^2)^{s/2}\widehat{u}\|_{L^{p'}(\R^n,\R)}$. When $p=2$, we recover the usual Sobolev spaces $H^s(\R^n,\R)$.
We also introduce the integrals
\begin{equation}\label{e:cm}
\Lambda_m=\left(\int_\R (1+\xi^2)^{-m}\dd\xi\right)^{1/2}\;
\end{equation}
and denote by $\mathbb B^n$ the Euclidean ball in dimension $n$ with radius 1 for which we recall that $|\mathbb B^n|=\pi^{n/2}/\Gamma(n/2+1)\leq 5.3$.
\begin{lemma}\label{lem:lambda-m}
The sequence $(\Lambda_m)$ is non--increasing for $m>1/2$ and for any $m\geq 1$ and  $n\geq 1$
\begin{equation}\label{e:lambda-m}
\int_{\R^n}\frac{1}{(1+|\xi|^2)^{m}}\dd\xi\leq |\mathbb B^n|\Lambda_{(m-(n-1))/2}^2.
\end{equation}
\end{lemma}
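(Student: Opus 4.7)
For Part~1 (monotonicity), it suffices to observe that $1+\xi^2\geq 1$ gives $(1+\xi^2)^{-m'}\leq(1+\xi^2)^{-m}$ pointwise whenever $m'\geq m>1/2$; integrating yields $\Lambda_{m'}^{2}\leq\Lambda_{m}^{2}$.

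For Part~2, my plan is to first derive an exact closed form for $I_n(m):=\int_{\R^n}(1+|\xi|^2)^{-m}\dd\xi$ and then compare it to the right-hand side. Writing $\xi=(\xi_1,\xi')\in\R\times\R^{n-1}$, applying Fubini, and substituting $\xi'=\sqrt{1+\xi_1^2}\,\eta$ (so that $1+\xi_1^2+|\xi'|^2=(1+\xi_1^2)(1+|\eta|^2)$ and $\dd\xi'=(1+\xi_1^2)^{(n-1)/2}\dd\eta$) produces the recursion $I_n(m)=\Lambda_{m-(n-1)/2}^{2}\,I_{n-1}(m)$. Iterating and using $I_1(m)=\Lambda_{m}^{2}$ yields
\[
I_n(m)=\prod_{k=0}^{n-1}\Lambda_{m-k/2}^{2}.
\]

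I would then split off the factor $\Lambda_{m-(n-1)/2}^{2}$ and, by Part~1 together with $m-(n-1)/2\geq(m-(n-1))/2$ for $m\geq 0$, bound it by $\Lambda_{(m-(n-1))/2}^{2}$. For the remaining product, using $\Lambda_s^{2}=\sqrt{\pi}\,\Gamma(s-1/2)/\Gamma(s)$, the numerators and denominators telescope to
\[
\prod_{k=0}^{n-2}\Lambda_{m-k/2}^{2}=\pi^{(n-1)/2}\,\frac{\Gamma(m-(n-1)/2)}{\Gamma(m)}.
\]
This ratio of gamma functions is non-increasing in $m$ (its logarithmic derivative $\psi(m-(n-1)/2)-\psi(m)$ is non-positive since $\psi$ is increasing), so over $m\geq n$ its supremum is attained at $m=n$ and equals $\pi^{(n-1)/2}\Gamma((n+1)/2)/\Gamma(n)$.

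It only remains to verify that $\pi^{(n-1)/2}\Gamma((n+1)/2)/\Gamma(n)\leq|\mathbb{B}^{n}|=\pi^{n/2}/\Gamma(n/2+1)$. Using Legendre's duplication $\Gamma(n/2)\Gamma((n+1)/2)=2^{1-n}\sqrt{\pi}\,\Gamma(n)$ together with $\Gamma(n/2+1)=(n/2)\Gamma(n/2)$, this reduces to the elementary inequality $n\leq 2^n$, which holds for all $n\geq 1$. When $m\leq n$, the right-hand side of \eqref{e:lambda-m} is already infinite and there is nothing to prove. The main obstacle I anticipate is spotting the exact recursion via the $\xi'\mapsto\sqrt{1+\xi_1^2}\,\eta$ rescaling; once that is in hand, the remainder is routine gamma-function bookkeeping.
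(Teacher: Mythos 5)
Your proof is correct, and it is considerably more careful than the paper's one--line sketch. The paper passes to polar coordinates and writes $\int_{\R^n}(1+|\xi|^2)^{-m}\dd\xi=|\mathbb B^n|\int_{\R}r^{n-1}(1+r^2)^{-m}\dd r$, but the correct polar--coordinate constant is the surface measure $n|\mathbb B^n|$ acting on $\int_0^\infty$, so the paper's displayed chain is off by a factor $n/2$; if one tracks constants properly, the paper's argument only gives $\tfrac{n}{2}|\mathbb B^n|\Lambda_{(m-(n-1))/2}^{2}$, and the naive intermediate inequality $\tfrac{n}{2}\Lambda_{m-(n-1)/2}^{2}\leq\Lambda_{(m-(n-1))/2}^{2}$ actually fails for, e.g., $n=5$ and $m=10$. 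Your route avoids this entirely: the Fubini--and--rescaling recursion $I_n(m)=\Lambda_{m-(n-1)/2}^{2}\,I_{n-1}(m)$ gives the exact identity $I_n(m)=\prod_{k=0}^{n-1}\Lambda_{m-k/2}^{2}$, you peel off one factor and bound it by monotonicity, and the remaining product telescopes via $\Lambda_s^2=\sqrt{\pi}\,\Gamma(s-1/2)/\Gamma(s)$ into a gamma ratio that is decreasing in $m$ and dominated (by the Legendre duplication formula and $n\le 2^n$) by $|\mathbb B^n|$. This establishes the bound with the stated constant $|\mathbb B^n|$, which the paper's own sketch does not; the paper's approach buys brevity, yours buys a correct constant. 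One could remark that for the downstream application (Lemma~\ref{lem:emb-C1}) the weaker constant $\tfrac{n}{2}|\mathbb B^n|$, which remains uniformly bounded in $n$, would have sufficed, so the paper's sloppiness is harmless there; but as a proof of the lemma \emph{as stated}, yours is the one that closes the gap.
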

\begin{proof}
By a change of variable in polar coordinates we get that
\[
\int_{\R^n}\frac{1}{(1+|\xi|^2)^{m}}\dd\xi=|\mathbb B^n|\int_{\R}\frac{r^{n-1}\dd r}{(1+r^2)^{m}}\leq |\mathbb B^n|\Lambda_{(m-(n-1))/2}^2.
\]
\end{proof}
Now, we make precise a result established by \citet[Section 3.3.1]{trie:83}.

\begin{lemma}\label{lem:emb-C1}
Let $p\in (1,+\infty)$, $\varepsilon>0$ and $s>1+\varepsilon+n/p$. For any $u\in H^s_p(\R^n,\R)$,  $u\in\mathcal{C}^1(\R^n,\R)$ and 
for some $C>1$ depending only on $\varepsilon$
\begin{equation}\label{e:emb}
\|u\|_{L^\infty(\R^n,\R)}+\|\nabla u\|_{L^\infty(\R^n,\R)}\leq C\|u\|_{H^s_p(\R^n,\R)}.
\end{equation}
\end{lemma}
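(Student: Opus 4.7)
The plan is to realize $u$ and its first-order partial derivatives as inverse Fourier transforms of $L^1(\R^n)$ functions: the Riemann--Lebesgue lemma then yields boundedness and continuity, while H\"older's inequality on the Fourier side provides the pointwise estimates in terms of $\|u\|_{H^s_p(\R^n,\R)}$.

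First, I factorize $|\widehat u(\xi)|=(1+|\xi|^2)^{s/2}|\widehat u(\xi)|\cdot(1+|\xi|^2)^{-s/2}$ and apply H\"older's inequality with the conjugate exponents $p'$ and $p$ to obtain
\[
\int_{\R^n}|\widehat u(\xi)|\,\dd\xi \leq \|u\|_{H^s_p(\R^n,\R)}\left(\int_{\R^n}(1+|\xi|^2)^{-sp/2}\,\dd\xi\right)^{1/p}.
\]
Since $s>1+\varepsilon+n/p$ implies $sp>n$ with margin $\varepsilon p$, a polar-coordinate estimate (or Lemma~\ref{lem:lambda-m}) makes the right-hand integral finite and bounded above by a constant depending only on $\varepsilon$ once $n,p$ are held fixed. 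Thus $\widehat u\in L^1(\R^n)$, $u$ agrees a.e.\ with a bounded continuous function by Fourier inversion, and $\|u\|_{L^\infty(\R^n,\R)}\leq C_1(\varepsilon)\|u\|_{H^s_p(\R^n,\R)}$.

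The same H\"older argument applied to $|\xi_j\widehat u(\xi)|$ for each $j=1,\ldots,n$, using $|\xi_j|\leq(1+|\xi|^2)^{1/2}$, yields
\[
\int_{\R^n}|\xi_j\widehat u(\xi)|\,\dd\xi \leq \|u\|_{H^s_p(\R^n,\R)}\left(\int_{\R^n}(1+|\xi|^2)^{-(s-1)p/2}\,\dd\xi\right)^{1/p}.
\]
As $s-1>\varepsilon+n/p$ ensures $(s-1)p>n$, this integral again converges to a value bounded by a constant depending only on $\varepsilon$. Differentiation under the Fourier integral, which is legitimate by dominated convergence once $|\xi|\,|\widehat u|\in L^1(\R^n)$, then yields $\partial_j u\in\mathcal{C}^0(\R^n,\R)$ with $\|\partial_j u\|_{L^\infty(\R^n,\R)}\leq C_2(\varepsilon)\|u\|_{H^s_p(\R^n,\R)}$. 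Summing over $j$ gives $u\in\mathcal{C}^1(\R^n,\R)$ and the estimate~\eqref{e:emb} with $C=\max(C_1,C_2)$. The only delicate point is ensuring that the constants collapse to a dependence on $\varepsilon$ alone: the margin $\varepsilon$ in the hypothesis $s>1+\varepsilon+n/p$ is precisely what is needed, since both critical integrals are monotone decreasing in $s$ and remain finite at the boundary value $s=1+\varepsilon+n/p$. Beyond this routine bookkeeping, I do not expect any genuine obstacle.
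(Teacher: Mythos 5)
Your main Fourier-analytic computation coincides with the paper's: write $u$ as the inverse Fourier transform of $\widehat u$, factor out $(1+|\xi|^2)^{s/2}$, apply H\"older with conjugate exponents $(p',p)$, and handle $\nabla u$ by inserting $|\xi_j|\le(1+|\xi|^2)^{1/2}$. The gap is in the constant. You assert that the critical integral is ``bounded above by a constant depending only on $\varepsilon$ once $n,p$ are held fixed'' and then try to discharge the uniformity requirement by remarking that the integrals are monotone decreasing in $s$. But monotonicity in $s$ is not the issue: the quantity $\bigl(\int_{\R^n}(1+|\xi|^2)^{-sp/2}\,\dd\xi\bigr)^{1/p}$ varies with $n$ (the dimension of the domain of integration) and with $p$ (both inside the integrand and in the outer power), and nothing in your argument controls that dependence.

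This uniformity in $(n,p)$ is precisely what the lemma is needed for: it is invoked in Step~3 of the proof of Proposition~\ref{pro:density:compatibility} with $p=p_n>3n/2$, so the constant must survive $n\to\infty$. The paper obtains it via Lemma~\ref{lem:lambda-m}, which bounds the $n$-dimensional integral by $|\mathbb B^n|^{1/p}\Lambda_{(m-(n-1))/2}^{2/p}$ with $\Lambda_m=\bigl(\int_\R(1+\xi^2)^{-m}\dd\xi\bigr)^{1/2}$, and then combines three facts simultaneously: $\sup_n|\mathbb B^n|<\infty$ (the volume of the $n$-dimensional Euclidean unit ball is bounded, indeed $\to0$); $(\Lambda_m)$ is non-increasing and the hypothesis $s>1+\varepsilon+n/p$ forces $(s-1)p-(n-1)>1+\varepsilon$, whence $\Lambda_{((s-1)p-(n-1))/2}\le\Lambda_{(1+\varepsilon)/2}$; and $p>1$, to dominate the exponents $1/p$ and $2/p$. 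Until you supply this reduction (or an equivalent), the claim that $C$ depends on $\varepsilon$ alone is unproved.
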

\begin{proof}
We use the density of the Schwartz class in $H^s_p(\R^n,\R)$ and first prove the inequality for $u$ belonging to this class. We use that
\[
u(x)=\int_{\R^n} \widehat{u}(\xi)e^{i\xi \cdot x}\dd\xi=\int_{\R^n}\left[(1+|\xi|^2)^{s/2}\widehat{u}(\xi)\right]\frac{e^{i\xi \cdot x}}{(1+|\xi|^2)^{s/2}}\dd\xi.
\]
From Cauchy Schwartz inequality 
\begin{equation}\label{e:emb-1}
|u(x)|\leq \left(\int_{\R^n}\frac{1}{(1+|\xi|^2)^{sp/2}}\dd\xi\right)^{1/p}\|u\|_{H^s_p(\R^n,\R)}\leq |\mathbb B^n|^{1/p}\Lambda_{sp/2}^{2/p}\|u\|_{H^s_p(\R^n,\R)}.
\end{equation}
Let $\ell\in\{1,\cdots,n\}$,
\[
\frac{\partial u}{\partial x_\ell}=\int_{\R^n} \xi_\ell \widehat{u}(\xi)e^{i\xi\cdot x}\dd\xi=\int_{\R^n}\left[(1+|\xi|^2)^{s/2}\widehat{u}(\xi)\right]\frac{\xi_\ell e^{i\xi\cdot x}}{(1+|\xi|^2)^{s/2}}\dd\xi.
\]
Then, from Lemma~\ref{lem:lambda-m} we get
\begin{align}\label{e:emb-2}
\left|\frac{\partial u}{\partial x_\ell}\right|&\leq \|u\|_{H^s_p(\R^n,\R)} \int_{\R^n}\frac{|\xi_\ell|^p}{(1+|\xi|^2)^{sp/2}}\dd\xi \leq |\mathbb B^n|^{1/p}\Lambda_{((s-1)p-(n-1))/2}^{2/p}\|u\|_{H^s_p(\R^n,\R)}.
\end{align}
By combining~\eqref{e:emb-1},~\eqref{e:emb-2} and since the function $m\mapsto \Lambda_m$ is non--increasing, we deduce that
 \[
 \|u\|_{L^\infty(\R^n,\R)}+\|\nabla u\|_{L^\infty(\R^n,\R)}\leq 2|\mathbb B^n|^{1/p}\Lambda_{((s-1)p-(n-1))/2}^{2/p}\|u\|_{H^s_p(\R^n,\R)}.
 \]
Since
\[
\sup_{n}\sup_{s>1+\varepsilon+n/p}\sup_{p>1}2|\mathbb B^n|^{1/p}\Lambda_{((s-1)p-(n-1))/2}^{2/p}\leq 2 \sup_{n}|\mathbb B^n|^{1/p}\Lambda_{(1+\varepsilon)/2}^{2}<\infty.
\]
\eqref{e:emb} is deduced for any function $u$ of the Schwartz class which leads to the result since this class is dense in $H^s_p(\R^n,\R)$.
\end{proof}

Now, we recall some basic properties of the trace operator, see~\cite{adams:75}.
\begin{lemma}
Fix $a\in\R$ and $\sigma>1/p$. The mapping
$
\gamma_{n,a}:\varphi\in \mathcal{D}(\R^{n+1},\R)\subset H^\sigma_p(\R^{n+1},\R)\mapsto
\varphi_{\big| x_{n+1}=0}=\int_{\R^{n+1}} e^{i x'\xi'}\widehat{\varphi}(\xi_1,\cdots,\xi_n,\xi_{n+1})d\xi'd\xi_{n+1}\in H^{\sigma-1/p}_p(\R^{n},\R)
$
is continuous with norm less than $\Lambda_\sigma^{2/p}$.
\end{lemma}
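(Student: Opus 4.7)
The plan is to exploit the Fourier-analytic characterization of the Bessel-potential norm and reduce everything to a one-dimensional weighted integral. For $\varphi\in\mathcal{D}(\R^{n+1},\R)$, applying the Fourier inversion formula at $x_{n+1}=a$ (or $0$) shows that the Fourier transform on $\R^n$ of $\gamma_{n,a}\varphi$ is obtained by integrating out the last frequency variable,
\[
\widehat{\gamma_{n,a}\varphi}(\xi')=\int_{\R}\widehat{\varphi}(\xi',\xi_{n+1})\,e^{ia\xi_{n+1}}\,d\xi_{n+1},
\]
so that controlling the $H^{\sigma-1/p}_p(\R^n,\R)$ norm of $\gamma_{n,a}\varphi$ amounts to controlling the weighted $L^{p'}$ norm of the integral above against $(1+|\xi'|^2)^{(\sigma-1/p)/2}$.

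Next, I would insert the dummy weight $(1+|\xi|^2)^{\sigma/2}$ inside the integral and apply Hölder's inequality in $\xi_{n+1}$ with exponents $p'$ and $p$,
\[
\big|\widehat{\gamma_{n,a}\varphi}(\xi')\big|\le\left(\int_{\R}|\widehat{\varphi}(\xi',\xi_{n+1})|^{p'}(1+|\xi|^2)^{\sigma p'/2}\,d\xi_{n+1}\right)^{1/p'}\!\!J(\xi')^{1/p},
\]
where $J(\xi')=\int_{\R}(1+|\xi|^2)^{-\sigma p/2}\,d\xi_{n+1}$. The substitution $\xi_{n+1}=(1+|\xi'|^2)^{1/2}t$ diagonalises this last integral: it factors out $(1+|\xi'|^2)^{(1-\sigma p)/2}$ and leaves a one-dimensional integral which equals $\Lambda_{\sigma p/2}^{2}$ (using the notation from~\eqref{e:cm}); the convergence here is precisely the content of the assumption $\sigma>1/p$.

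The point of the calculation is that multiplying by the weight $(1+|\xi'|^2)^{(\sigma-1/p)/2}$ cancels exactly the factor $(1+|\xi'|^2)^{(1-\sigma p)/(2p)}$ coming from $J(\xi')^{1/p}$. One then takes the $L^{p'}(\R^n)$ norm in $\xi'$ and applies Fubini to recognise the right-hand side as $\|\varphi\|_{H^\sigma_p(\R^{n+1},\R)}$ up to a constant of the form $\Lambda^{2/p}$, after which density of $\mathcal{D}(\R^{n+1},\R)$ in $H^\sigma_p(\R^{n+1},\R)$ extends the estimate.

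I expect the main obstacle to be purely bookkeeping: matching the exponent coming from the scaling of $J(\xi')$ with the weight $(1+|\xi'|^2)^{(\sigma-1/p)/2}$, and matching the arising constant with $\Lambda_\sigma^{2/p}$ as stated (the exponent inside $\Lambda$ depends on whether one writes the norm with $L^{p'}$ or $L^p$ and collapses to $\sigma$ in the $L^2$ case). The independence of the constant from the height $a$ follows automatically since $|e^{ia\xi_{n+1}}|=1$ enters only through the trivial bound in the Hölder step.
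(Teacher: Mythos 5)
The paper does not give its own proof of this lemma; it is cited to \cite{adams:75} as a standard trace estimate, so your self-contained derivation is the only argument actually on the table. Your route --- Fourier inversion to identify $\widehat{\gamma_{n,a}\varphi}(\xi')$ as the integral of $\widehat{\varphi}$ over the last frequency variable, then H\"older in $\xi_{n+1}$ against the weight $(1+|\xi|^2)^{-\sigma/2}$, then the scaling $\xi_{n+1}=(1+|\xi'|^2)^{1/2}t$ to evaluate the resulting weight integral --- is the standard one, and with the paper's Fourier--$L^{p'}$ definition of $H^s_p$ the H\"older and Fubini steps are exactly as clean as you describe. The cancellation of the power of $1+|\xi'|^2$ against the weight $(1+|\xi'|^2)^{(\sigma-1/p)/2}$ is also correct, and the independence of the bound from $a$ via $|e^{ia\xi_{n+1}}|=1$ is handled properly. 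The argument is sound.

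The one place where you should be more decisive rather than hedge is the constant. Your computation yields $\Lambda_{\sigma p/2}^{2/p}$, and this is \emph{not} the same as $\Lambda_\sigma^{2/p}$ stated in the lemma unless $p=2$. You flag this as ``bookkeeping'', but in fact your constant is the correct one and the stated one appears to be a slip in the paper. Observe that $\Lambda_{\sigma p/2}=\bigl(\int_\R(1+\xi^2)^{-\sigma p/2}\dd\xi\bigr)^{1/2}$ is finite precisely when $\sigma p>1$, i.e.\ $\sigma>1/p$ --- exactly the stated hypothesis --- whereas $\Lambda_\sigma$ is finite only when $\sigma>1/2$, which is a strictly stronger requirement as soon as $p>2$. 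So the constant printed in the lemma can actually be infinite in the regime the hypothesis permits, while $\Lambda_{\sigma p/2}^{2/p}$ is always finite there. You should assert your constant as the outcome of the estimate and point out that the stated $\Lambda_\sigma^{2/p}$ collapses to it only in the Hilbert case $p=2$, rather than leaving it as an open ``obstacle''.
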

The {trace} operator $\gamma_{n,a}^{(p)}$ is defined as its continuous extension from $H^m_p(\R^{n+1},\R)$ to $H^{m-1/p}_p(\R^{n},\R)$. Since it is defined by density on $\mathcal{D}(\R^{n+1},\R)\subset\bigcap_{p,s>1/p} H^s_p(\R^{n+1},\R)$, for $p_1\neq p_2$ the two operators $\gamma_{n,a}^{(p_1)}$ and $\gamma_{n,a}^{(p_2)}$ coincide. For the sake of simplicity, we drop the index $p$, and always denote by $\gamma_{n,a}f$ the trace of a function $f$ belonging to some space $H^s_p(\R^{n+1},\R)$ for some $s>1/p$ and $p>1$.

In addition, we give an explicit expression of the extension operator to the Schwartz class, see \citet[Lemma~1.17, Chapter 11]{zuily:02}). The following result is a slight modification adapted to our framework.

\begin{lemma}\label{lem:explicit:book}
Let $a\in\R$, $M>0$, $p\geq 2$, $s\in (0,2M)$ and $g$ belonging to the Schwartz class. Then, the function $f$ defined from $g$ on $\R^{n+1}$ in the Fourier domain by
\begin{equation}\label{e:explicit:ext}
\widehat
f(\xi',\xi_{n+1})=\Lambda_{M+1/2}^{-2}\frac{(1+|\xi'|^2)^M\,\widehat{g}(\xi')}{(1+|\xi'|^2+\xi_{n+1}^2)^{M+1/2}},\; \forall \xi=(\xi',\xi_{n+1})\in \R^{n+1}
\end{equation}
is an element of  $H^{s+1/p}(\R^{n+1},\R)$ and satisfies
\begin{equation}\label{e:explicit:norm-bessel}
\|f\|_{H^{s+1/p}_p(\R^{n+1},\R)}\leq \Lambda_{M+1/2}^{-2}\Lambda_{Mp+p/2-1/2+sp/2}^{2/p}\|g\|_{H^{s}_p(\R^{n},\R)}
\end{equation}
and $\gamma_{n,a} f=g$. Moreover, if $g$ is symmetric, so is $f$.
\end{lemma}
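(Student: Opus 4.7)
The plan is to work entirely in the Fourier domain, where $\widehat{f}$ is given explicitly, and reduce each of the three claims (Sobolev norm bound, trace identity, symmetry) to direct computations with the explicit multiplier. As a preliminary, I would observe that the multiplier $(1+|\xi'|^2)^M/(1+|\xi'|^2+\xi_{n+1}^2)^{M+1/2}$ is smooth, dominated by $(1+|\xi|^2)^{-1/2}$, and has rapidly decaying derivatives in $\xi_{n+1}$; since $\widehat{g}$ is Schwartz by assumption, so is $\widehat{f}$, which legitimates all the integrations below.

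For the norm bound, I would substitute $\widehat{f}$ into
\[
\|f\|_{H^{s+1/p}_p(\R^{n+1})}^{p'}=\int_{\R^{n+1}}(1+|\xi|^2)^{(s+1/p)p'/2}|\widehat{f}(\xi',\xi_{n+1})|^{p'}\dd\xi'\dd\xi_{n+1},
\]
factor out the pieces that depend only on $\xi'$, and reduce the remaining integral over $\xi_{n+1}$ to $\int_{\R}(1+|\xi'|^2+\xi_{n+1}^2)^{-\gamma}\dd\xi_{n+1}$ for a suitable exponent $\gamma=\gamma(M,s,p)$. The substitution $\eta=\xi_{n+1}/\sqrt{1+|\xi'|^2}$ turns this integral into $(1+|\xi'|^2)^{1/2-\gamma}\Lambda_\gamma^2$; the assumption $s\in(0,2M)$ forces $\gamma>1/2$, which guarantees that $\Lambda_\gamma$ is finite. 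The essential cancellation is that the numerator exponent $M$ has been engineered so that the leftover power of $(1+|\xi'|^2)$ matches exactly the weight $sp'/2$ appearing in $\|g\|_{H^s_p(\R^n)}^{p'}$; assembling these factors reproduces the announced bound.

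For the trace identity, I would use that evaluating at $x_{n+1}=0$ corresponds in Fourier variables to integrating out $\xi_{n+1}$, namely $\widehat{\gamma_{n,0}f}(\xi')=\int_{\R}\widehat{f}(\xi',\xi_{n+1})\dd\xi_{n+1}$. Applying the same substitution as above, this integral equals $\Lambda_{M+1/2}^2(1+|\xi'|^2)^{-M}$, which exactly cancels the prefactor $\Lambda_{M+1/2}^{-2}(1+|\xi'|^2)^M$ in $\widehat{f}$ and leaves $\widehat{g}(\xi')$, whence $\gamma_{n,0}f=g$. The symmetry claim is immediate from the formula: if $g$ is symmetric under permutations of its $n$ arguments then so is $\widehat{g}(\xi')$, and the multiplier depends on $\xi'$ only through $|\xi'|^2$, so $\widehat{f}$, and hence $f$, inherit the symmetry in the first $n$ variables.

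The main obstacle will be the bookkeeping of exponents: one must verify that the $\gamma$ produced by the substitution matches the subscript $Mp+p/2-1/2+sp/2$ stated in the lemma, using $p'=p/(p-1)$ and the paper's convention that the $H^s_p$-norm is the $L^{p'}$-norm of $(1+|\xi|^2)^{s/2}\widehat{u}$. This is elementary but delicate, and it is the only place where the specific shape of the multiplier and the hypothesis $s<2M$ really interact.
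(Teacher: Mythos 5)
Your treatment of the norm bound coincides with the paper's own: substitute the explicit multiplier into the weighted Fourier-side integral, integrate in $\xi_{n+1}$ via the change of variable $\eta=\xi_{n+1}/\sqrt{1+|\xi'|^2}$ to pull out a factor $\Lambda_\gamma^2(1+|\xi'|^2)^{1/2-\gamma}$, and verify that the leftover power of $(1+|\xi'|^2)$ reproduces the weight of $\|g\|_{H^s_p}$. Where you genuinely diverge is on the remaining conclusions: the paper disposes of the membership in $H^{s+1/p}_p$, the trace identity $\gamma_{n,a}f=g$, and the symmetry by citing Zuily's Lemma~1.17 and only proves the norm estimate, whereas you give self-contained Fourier computations for all of them. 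In particular your observation that $\int_\R\widehat{f}(\xi',\xi_{n+1})\,\dd\xi_{n+1}=\widehat{g}(\xi')$, by the same substitution, is a clean direct verification of the trace identity that the paper never spells out; this makes your argument more elementary and less dependent on the external reference.

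Two corrections. First, $\widehat{f}$ is not Schwartz: for fixed $\xi'$ it decays in $\xi_{n+1}$ only like $|\xi_{n+1}|^{-(2M+1)}$, since the numerator is independent of $\xi_{n+1}$. This is harmless (absolute integrability follows from $s<2M$), but the preliminary justification should be stated in terms of integrability rather than Schwartz membership. Second, the exponent bookkeeping you flag as ``delicate'' is actually ambiguous in the source: the paper's displayed definition of $\|\cdot\|_{H^s_p}$ takes the $L^{p'}$-norm of $(1+|\xi|^2)^{s/2}\widehat{u}$, which is what you adopt, but the paper's own proof of this lemma silently works with the $L^p$-norm, and neither convention reproduces the stated subscript $Mp+p/2-1/2+sp/2$: the $L^p$ computation yields $Mp+p/2-1/2-sp/2$ (or, after the paper's extra $p\ge2$ bound, $Mp+1/2-sp/2$), while the $L^{p'}$ route yields $Mp'-sp'/2+1/2$ with power $2/p'$. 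The sign on the $sp/2$ term is evidently a typo in the statement. This is not a defect of your method; the only fact used downstream in Lemma~B.5 is that the constant stays bounded over $p\ge2$, which holds in every version.
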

\begin{remark}
The function $f$ is not unique (it depends on $M$) since the trace operator is only a surjective operator. The main point is that we can extend the function $g$ so that inequalities~\ref{e:explicit:norm-bessel} are valid for a large range of values of $(p,s)$ and for the {\it same functions} $f$.
\end{remark}
\begin{proof}
All the conclusions of Lemma~\ref{lem:explicit:book} are stated and proved in~\citet[Lemma~1.17 of Chapter 11]{zuily:02}, except~\eqref{e:explicit:norm-bessel}, which we now focus on.
To prove~\eqref{e:explicit:norm-bessel}, observe that since $p\geq 2$
\begin{align*}
(1+|\xi|^2)^{(s+1/p)p/2}|\widehat{f}(\xi)|^p&=\Lambda_{M+1/2}^{-2p}(1+|\xi'|^2)^{Mp}\frac{|\widehat{g}(\xi')|^p}{(1+|\xi|^2)^{Mp+p/2-sp/2-1/2}}\\
&\leq \Lambda_{M+1/2}^{-2p}(1+|\xi'|^2)^{Mp}\frac{|\widehat{g}(\xi')|^p}{(1+|\xi|^2)^{Mp+1/2-sp/2}}.
\end{align*}
Since $M>s/2$, $2Mp+1-sp>1$ which implies $\xi_{n+1}\mapsto (1+|\xi|^2)^{-(Mp+1/2-sp/2)}$ is integrable. Hence
\begin{align*}
\int_{\R}(1+|\xi|^2)^{(sp+1)/2}|\widehat{f}(\xi)|^p\,\dd\xi_{n+1}=&\Lambda_{M+1/2}^{-2p}\left(\int_\R (1+|\xi|^2)^{-(Mp+1/2-sp/2)}\dd\xi_{n+1}\right) \\
&\;\times (1+|\xi'|^2)^{Mp}|\widehat{g}(\xi')|^p.
\end{align*}
Denote $\xi_{n+1}=(1+|\xi'|^2)^{1/2}\eta$, then
\begin{align*}
\int_\R (1+|\xi|^2)^{-(Mp+1/2-sp/2)}\dd\xi_{n+1}&=\frac{(1+|\xi'|^2)^{1/2}}{(1+|\xi'|^2)^{Mp+1/2-sp/2}} \\
& \quad \times \int_\R (1+|\eta|^2)^{-(Mp+1/2-sp/2)}\dd\eta\\
&=\Lambda_{Mp+1/2-sp/2}^2\frac{(1+|\xi'|^2)^{1/2}}{(1+|\xi'|^2)^{Mp+1/2-sp/2}}
\end{align*}
whereby we deduce that
\begin{align*}
\int_{\R^{n+1}}&(1+|\xi|^2)^{(sp+1)/2}|\widehat{f}(\xi)|^p\,\dd\xi_{n+1}\\
&=\Lambda_{M+1/2}^{-2p}\Lambda_{Mp+1/2-sp/2}^2\int_{\R^n}(1+|\xi'|^2)^{Mp}\cdot \frac{(1+|\xi'|^2)^{1/2}}{(1+|\xi'|^2)^{Mp+1/2-sp/2}}|\widehat{g}(\xi')|^p \dd\xi'\\
&=\Lambda_{M+1/2}^{-2p}\Lambda_{Mp+1/2-sp/2}^2\int_{\R^n}(1+|\xi'|^2)^{sp/2}|\widehat{g}(\xi')|^p \dd\xi'\\
&=\Lambda_{M+1/2}^{-2p}\Lambda_{Mp+1/2-sp/2}^2\|g\|_{H^{s}_{p}(\R^{n},\R)}^p.
\end{align*}
\end{proof}

Based on the previous lemma, we can state the following one.
\begin{lemma}\label{lem:explicit}
  Let $a\in \R$, $M>0$, $p\geq 2$, $s\in (0,2M)$ and $g\in H^s_p(\R^n,\R)$. Then, there exists some function $f$ belonging to $H^{s+1/p}_p(\R^{n+1},\R)$ such that $\gamma_{n,a} f=g$. In addition,
\begin{equation}\label{e:bound:extend}
\|f\|_{H^{s+1/p}_p(\R^{n+1},\R)}\leq K_M\|g\|_{H^{s}_p(\R^{n},\R)}
\end{equation}
where $K_M>0$ depends only on $M$.
Moreover, if $g$ is symmetric, $f$ can also be chosen symmetric.
\end{lemma}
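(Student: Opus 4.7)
The plan is to upgrade Lemma~\ref{lem:explicit:book} from Schwartz data to arbitrary data in $H^s_p(\R^n,\R)$ by a standard density and completeness argument, exploiting the fact that the extension operator provided by the explicit formula~\eqref{e:explicit:ext} is \emph{linear} in $g$.

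First, I would note that the Schwartz class $\mathcal S(\R^n,\R)$ is dense in $H^s_p(\R^n,\R)$, so I pick a sequence $(g_j) \subset \mathcal S(\R^n,\R)$ with $g_j \to g$ in $H^s_p(\R^n,\R)$. For each $g_j$, Lemma~\ref{lem:explicit:book} produces, through the explicit Fourier formula~\eqref{e:explicit:ext}, a function $f_j \in H^{s+1/p}_p(\R^{n+1},\R)$ with $\gamma_{n,a} f_j = g_j$ and
\[
\|f_j\|_{H^{s+1/p}_p(\R^{n+1},\R)}\leq \Lambda_{M+1/2}^{-2}\Lambda_{Mp+p/2-1/2+sp/2}^{2/p}\,\|g_j\|_{H^{s}_p(\R^{n},\R)}.
\]
Setting $K_M = \Lambda_{M+1/2}^{-2}\Lambda_{Mp+p/2-1/2+sp/2}^{2/p}$, which is finite because the hypotheses $p\geq 2$, $s\in(0,2M)$ and $M>0$ make the relevant exponents large enough to ensure convergence of the integrals in~\eqref{e:cm}, gives us a constant depending only on $M$ (and on the fixed $p,s$).

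Next, since the map $g \mapsto f$ defined by~\eqref{e:explicit:ext} is linear, applying the same bound to $g_j - g_k$ shows that $(f_j)$ is Cauchy in $H^{s+1/p}_p(\R^{n+1},\R)$, hence converges to some $f \in H^{s+1/p}_p(\R^{n+1},\R)$. Passing to the limit in the inequality yields~\eqref{e:bound:extend}. Continuity of the trace operator $\gamma_{n,a}: H^{s+1/p}_p(\R^{n+1},\R) \to H^s_p(\R^n,\R)$ then gives $\gamma_{n,a} f = \lim_j \gamma_{n,a} f_j = \lim_j g_j = g$.

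It remains to handle the symmetry claim. If $g$ is symmetric, I would choose the approximating sequence symmetric as well. This is done either by invoking density of the symmetric Schwartz functions in the symmetric subspace of $H^s_p$, or, more concretely, by symmetrizing arbitrary Schwartz approximations $\widetilde g_j$: set $g_j(x_1,\dots,x_n) = (n!)^{-1}\sum_{\sigma \in \mathfrak S_n} \widetilde g_j(x_{\sigma(1)},\dots,x_{\sigma(n)})$, which remains Schwartz, preserves convergence in $H^s_p(\R^n,\R)$ because the norm is invariant under permutation of the coordinates, and is symmetric. Lemma~\ref{lem:explicit:book} then yields symmetric extensions $f_j$, and symmetry is preserved under the $H^{s+1/p}_p$ limit, giving $f$ symmetric. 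The only genuinely delicate point is checking that the constant $K_M$ is independent of $n$ and $g$, which reduces to the monotonicity of $m\mapsto\Lambda_m$ from Lemma~\ref{lem:lambda-m}; but this is routine once the Fourier formula is in hand.
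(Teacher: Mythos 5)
Your argument has the same skeleton as the paper's proof — approximate $g$ by Schwartz functions, extend each one with the explicit Fourier formula from Lemma~\ref{lem:explicit:book}, use linearity to propagate the Cauchy property, pass to the limit, and symmetrize if needed — so most of the structure is fine. But there is a genuine gap concerning the constant $K_M$. You set $K_M = \Lambda_{M+1/2}^{-2}\Lambda_{Mp+p/2-1/2+sp/2}^{2/p}$ and then remark that this is a constant ``depending only on $M$ (and on the fixed $p,s$)''. That parenthetical gives the game away: this quantity manifestly depends on $p$ and $s$, while the lemma explicitly asserts that $K_M$ depends only on $M$. This is not a cosmetic issue — the whole point of the lemma is to supply a constant that is uniform in $p$ and $s$, because in the proof of Proposition~\ref{pro:density:compatibility} the lemma is invoked repeatedly with $M=2$ fixed but with $p$ varying along the induction (``$s=2,\,p>N+1$'' and then ``$s=1,\,p=2$''). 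If $K_2$ were allowed to drift with $p$, the bounds $D_{\ell,p}\ge 2K_2+1$ in that induction would not close.

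The missing step is exactly what the paper supplies: since $p\geq 2$, $s>0$ and $M>0$ force $Mp + p/2 - 1/2 + sp/2 \ge 2M + 1/2$, the monotonicity of $m\mapsto\Lambda_m$ (non-increasing for $m>1/2$, Lemma~\ref{lem:lambda-m}) gives $\Lambda_{Mp+p/2-1/2+sp/2}^{2/p}\le \Lambda_{2M+1/2}^{2/p}$, and the right-hand side is bounded uniformly over $p\ge 2$ (by $\max(1,\Lambda_{2M+1/2})$). One then takes $K_M=\Lambda_{M+1/2}^{-2}\sup_{p\geq2}\Lambda_{Mp+p/2-1/2+sp/2}^{2/p}$. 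Your closing sentence identifies ``the only genuinely delicate point'' as independence of the constant from $n$ and $g$, but independence from $n$ is automatic from the explicit formula (no $n$ appears in it) and independence from $g$ is trivial; the delicate point you actually need, and did not verify, is independence from $p$ and $s$. As it stands your proof establishes the extension and the trace identity, but not the uniform bound~\eqref{e:bound:extend} as stated.
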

\begin{proof}
  We consider a sequence $(g_\ell)$ of functions of belonging to the Schwartz calss converging to $g$ in $H^s_p(\R^n,\R)$. We fix some $M>s/2$. For any $\ell$, we define an extension $(f_\ell)$ of the function $(g_\ell)$ using~\eqref{e:explicit:ext}. Since $(g_\ell)$ is a Cauchy sequence in $H^s_p(\R^n,\R)$, we deduce from~\eqref{e:explicit:norm-bessel} applied with $\sigma=s$ that
\[
  \|f_\ell-f_m\|_{H^{s+1/p}_p(\R^{n+1},\R)}\leq \Lambda_{M+1/2}^{-2}\Lambda_{Mp+p/2-1/2+sp/2}^{2/p}\|g_\ell-g_m\|_{H^{s}_p(\R^{n},\R)}\to 0
\]
as $\ell,m\to\infty$.
The sequence $(f_\ell)$ is a Cauchy sequence in $H^{s+1/p}_p(\R^{n+1},\R)$ and converges to
some function $f$ in $H^{s+1/p}_p(\R^{n+1},\R)$. This convergence also
holds in $H^{s+1/p}_p(\R^{n+1},\R)$. Hence, we can let
$\ell \to \infty$ in the inequality
\[
\|f_\ell\|_{H^{s+1/p}_p(\R^{n+1},\R)}\leq \Lambda_{M+1/2}^{-2}\Lambda_{Mp+p/2-1/2+sp/2}^{2/p}\|g_\ell\|_{H^{s}_p(\R^{n},\R)}
\]
to get
\[
\|f\|_{H^{s+1/p}_p(\R^{n+1},\R)}\leq \Lambda_{M+1/2}^{-2}\Lambda_{Mp+p/2-1/2+sp/2}^{2/p}\|g\|_{H^{s}_p(\R^{n},\R)}.
\]

To deduce~\eqref{e:bound:extend}, we observe that, since $(\Lambda_m)$ is a non--decreasing sequence, \linebreak$\sup_{p\geq 2}\Lambda_{Mp+p/2-1/2+sp/2}^{2/p}\leq \sup_{p\geq 2}\Lambda_{2M+1/2}^{2/p}<+\infty$. Therefore, we can set
\[
K_M=\Lambda_{M+1/2}^{-2}\left(\sup_{p\geq 2}\Lambda_{Mp+p/2-1/2+sp/2}^{2/p}\right).
\]

If $g$ is symmetric, we can replace the sequence $(g_\ell)$ by its symmetric
part. Therefore, the functions $(f_\ell)$ are symmetric and so is $f$.
\end{proof}
From this result, we can deduce the next proposition used in the proof of
Lemma~\ref{lem:deriv:closed}.

\begin{proposition}\label{pro:density:compatibility}
Let $a\in \R$ and $k\in \N\setminus \{0\}$. Let $(\Phi_n)_{n\geq 0}$ be a sequence satisfying: (a) $\Phi_n\equiv 0$ if $n<k$. (b) For any $n\geq k$, $\Phi_n$ is a symmetric function belonging to $H^1(\R^n,\R)$ such that $\|\Phi_n\|_{H^1(\R^n,\R)}\leq B^n$ for some $B>1$. (c) For any $n\geq 0$, $\gamma_{n,a}\Phi_{n+1}\equiv \Phi_n$.
Then, there exists a sequence $(g_{\ell,n})_{n\geq 0,\ell}$ such that
\begin{itemize}
  \item[(A)] For any $\ell$ and any $n<k$, $g_{\ell,n}=0$.
  \item [(B)] For any $n\geq k$ and $\ell$, $g_{\ell,n}$ are symmetric functions belonging to $\mathcal{C}^1(\R^n,\R)$ 
      such that for any $\ell$, $\|g_{\ell,n}\|_{L^\infty(\R^n,\R)}+\|\nabla g_{\ell,n}\|_{L^\infty(\R^n,\R)}\leq M_\ell^n$ for some $M_\ell>1$.
  \item[(C)] For any $\ell$ and $n$, $\|g_{\ell,n}-\Phi_n\|_{H^1(\R^n,\R)}\leq C^n/\ell$ for some $C>1$ independent on $\ell$ and $n$.
  \item[(D)] For any $\ell$, the sequence $g_{\ell,n}$ also satisfies the relation $\gamma_{n,a} g_{\ell,n+1}\equiv g_{\ell,n}$.
\end{itemize}
\end{proposition}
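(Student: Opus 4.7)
I would build the $g_{\ell,n}$ by induction on $n$, so that the trace compatibility (D) is automatic at each step. For $n<k$, assumption (a) gives $\Phi_n\equiv 0$, and condition (c) applied at $n=k-1$ forces $\gamma_{k-1,a}\Phi_k\equiv 0$, so the trace target is also zero at the first nontrivial level; I simply set $g_{\ell,n}\equiv 0$ for $n<k$, which takes care of (A) and the base of (D).

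For the inductive step at $n\geq k$, assume $g_{\ell,n}$ has been built, symmetric, in $\mathcal{C}^1(\R^n,\R)$, with $\gamma_{n-1,a}g_{\ell,n}\equiv g_{\ell,n-1}$, and satisfying (C). Apply Lemma~\ref{lem:explicit} (with $p=2$ and a fixed $M>1/2$) to lift $g_{\ell,n}$ to a function $\mathcal{E}g_{\ell,n}\in H^{3/2}_2(\R^{n+1},\R)$ whose trace on $\{x_{n+1}=a\}$ equals $g_{\ell,n}$, and similarly lift $\Phi_n$ to $\mathcal{E}\Phi_n$. By compatibility (c), the residual $R_{n+1}:=\Phi_{n+1}-\mathcal{E}\Phi_n$ has zero trace on $\{x_{n+1}=a\}$, so the density of $\mathcal{D}$ of the open half-space in $H^1_0$, combined with mollification and symmetrization, yields a function $\psi_{\ell,n+1}\in\mathcal{D}(\R^{n+1},\R)$ supported away from $\{x_{n+1}=a\}$ and satisfying $\|\psi_{\ell,n+1}-R_{n+1}\|_{H^1(\R^{n+1},\R)}\leq 1/\ell$. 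I then set
\[
  g_{\ell,n+1}\;:=\;\mathcal{E}g_{\ell,n}+\psi_{\ell,n+1},
\]
possibly followed by symmetrization (see below). Property (D) is then immediate.

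For (C), writing $a_n:=\|g_{\ell,n}-\Phi_n\|_{H^1(\R^n,\R)}$, the triangle inequality applied to
\[
  g_{\ell,n+1}-\Phi_{n+1}=\mathcal{E}(g_{\ell,n}-\Phi_n)+(\psi_{\ell,n+1}-R_{n+1})
\]
together with the bound $\|\mathcal{E}h\|_{H^1}\leq\|\mathcal{E}h\|_{H^{3/2}_2}\leq K_M\|h\|_{H^1}$ (Lemma~\ref{lem:explicit} and the continuous embedding $H^{3/2}_2\hookrightarrow H^1$) yields $a_{n+1}\leq K_M a_n+1/\ell$; iterating gives $a_n\leq C^n/\ell$ with $C=2K_M$, a constant independent of $\ell$. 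For (B), I would pick at each level parameters $(s,p)$ in Lemma~\ref{lem:explicit} with $s>1+(n+1)/p$, so that Lemma~\ref{lem:emb-C1} gives $g_{\ell,n+1}\in\mathcal{C}^1(\R^{n+1},\R)$ with $\|g_{\ell,n+1}\|_{L^\infty}+\|\nabla g_{\ell,n+1}\|_{L^\infty}$ controlled by its $H^s_p$-norm, which itself satisfies an exponential-in-$n$ recursion of the form $M_\ell^{n+1}$ (with $M_\ell$ depending on $\ell$ through the chosen parameters and the smoothness of $\psi_{\ell,n+1}$).

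The main obstacle is preserving \emph{full} symmetry of $g_{\ell,n}$ across the recursion. The extension of Lemma~\ref{lem:explicit} is symmetric only in the first $n$ variables and treats the new coordinate $x_{n+1}$ specially, so naive iteration produces functions symmetric in $n$ variables but not $n+1$. Averaging $\mathcal{E}g_{\ell,n}$ over all permutations of $(x_1,\dots,x_{n+1})$ would generally destroy the trace equality on $\{x_{n+1}=a\}$, since the symmetrization mixes in values of $\mathcal{E}g_{\ell,n}$ evaluated on other coordinate hyperplanes. The natural remedy is to strengthen the inductive hypothesis by requiring that the trace of $g_{\ell,n+1}$ on every hyperplane $\{x_i=a\}$ equals $g_{\ell,n}$ evaluated at the remaining coordinates (which follows automatically once $g_{\ell,n+1}$ is symmetric and the trace on one hyperplane is correct), and to choose $\psi_{\ell,n+1}$ to vanish in a neighborhood of every hyperplane $\{x_i=a\}$, so that symmetrization preserves the trace property together with the $H^1$ error estimate. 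Keeping the constant $C$ in (C) independent of $\ell$ through this symmetrized recursion is the most delicate bookkeeping step.
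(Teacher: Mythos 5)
Your construction is genuinely different from the paper's in its choice of decomposition. The paper sets $g_{\ell,n+1}=h_{\ell,n+1}+\widetilde h_{\ell,n+1}$, where $h_{\ell,n+1}$ is a \emph{fresh} smooth approximation of $\Phi_{n+1}$ (so no error accumulation across levels) and $\widetilde h_{\ell,n+1}$ is a Lemma~\ref{lem:explicit} extension of the residual trace $g_{\ell,n}-\gamma_N h_{\ell,n+1}$, which is itself already small. You instead put the extension term in the lead: $g_{\ell,n+1}=\mathcal Eg_{\ell,n}+\psi_{\ell,n+1}$, with $\psi$ a smooth zero-trace correction of the deficit $R_{n+1}=\Phi_{n+1}-\mathcal E\Phi_n$. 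Your error recursion $a_{n+1}\le K_M a_n+1/\ell$ and the resulting $a_n\le C^n/\ell$ are correct, so (C) and (D) do go through in your formulation. The two arguments use the same extension lemma and induction shape, just with the roles of ``main term'' and ``correction'' swapped.

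There are, however, two concrete gaps. The first concerns property (B). The paper explicitly chooses $h_{\ell,n}\equiv 0$ for $n\ge n_\ell$ (where $B^n\ge\ell$), a truncation that still fits inside the exponential error budget because $\|\Phi_n\|_{H^1}\le B^n=B^{2n}B^{-n}\le B^{2n}/\ell$; this is precisely what makes $C_{\ell,p}=\sup_n\|h_{\ell,n}\|_{H^2_p}<\infty$ and is the engine behind the bound $\|g_{\ell,n}\|_{L^\infty}+\|\nabla g_{\ell,n}\|_{L^\infty}\le M_\ell^n$. In your scheme $\psi_{\ell,n+1}\in\mathcal D(\R^{n+1})$ is picked only by an $H^1$ proximity criterion, so nothing constrains its $\mathcal C^1$ or $H^2_p$ size; the $\mathcal C^1$ norms of the $\psi_{\ell,n}$ can grow arbitrarily in $n$, and the claimed $M_\ell^{n+1}$ recursion for (B) is not established. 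You would need to import the paper's truncation (and then re-do (C) on the truncated range, where the new level is $\mathcal Eg_{\ell,n}$ alone and the error absorbs a term of order $B^{n+1}$, requiring the same $B^{2n}/\ell$ accounting the paper uses).

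The second gap is your symmetry fix. The extension of Lemma~\ref{lem:explicit} treats $\xi_{n+1}$ specially, so $\mathcal Eg_{\ell,n}$ is symmetric only in $x_1,\dots,x_n$. You propose to symmetrize over all $n+1$ coordinates and argue this is harmless because $\psi_{\ell,n+1}$ vanishes near every hyperplane $\{x_i=a\}$. But vanishing of $\psi$ near the hyperplanes does not repair the symmetrized \emph{extension} term: the permutation average evaluates $\mathcal Eg_{\ell,n}$ on hyperplanes $\{x_j=a\}$ for $j\le n$, and these slices are not controlled by the inductive hypothesis (they are not $\gamma_{j,a}\mathcal Eg_{\ell,n}=g_{\ell,n-1}$; the extension operator does not commute with taking traces in a coordinate of $g_{\ell,n}$). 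Consequently the symmetrization generically destroys $\gamma_{n,a}g_{\ell,n+1}=g_{\ell,n}$, and strengthening the inductive hypothesis in the way you suggest is circular, since the extra trace identities on all hyperplanes are a consequence of the very symmetry you are trying to build in. You correctly identified this as the delicate step, but the remedy as stated does not close it.
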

\begin{proof}
Up to a translation, we can assume that $a=0$. We can also
assume that $k=1$ and we denote $\gamma_n=\gamma_{n,0}$.

\noindent{\it Step~1:  Approximation of functions $\Phi_n$ by smooth functions $(h_{\ell,n})$}.
Set for any $\ell$,
$g_{\ell,0}=h_{\ell,0}= 0$. Let $\ell$ a fixed integer. There exists $n_\ell$ such that for any $n\geq n_\ell$, $B^{n}\geq \ell$. For $n\geq n_\ell$, we set $h_{\ell,n}\equiv 0$. Since $\|\Phi_n\|_{H^1(\R^n,\R)}\leq B^n$, we deduce that for $n\geq n_\ell$,
\begin{equation}\label{e:density-hnl}
\|h_{\ell,n}-\Phi_n\|_{H^1(\R^n,\R)}=\|\Phi_n\|_{H^1(\R^n,\R)}\leq B^n=B^{2n}B^{-n}\leq B^{2n}/\ell
\end{equation}
by definition of $n_\ell$. Hereafter, we define $(h_{\ell,n})$ for $n\leq n_\ell-1$. To do so, we apply classical density results and
consider a sequence $(h_{\ell,n})\in\mathcal{D}(\R^n,\R)$, such that for any $\ell$ inequality~\eqref{e:density-hnl} is satisfied for any $1\leq n\leq n_\ell$.

Since the functions $h_{\ell,n}$ are both smooth and compactly supported and $h_{\ell,n}\equiv 0$ for $n\geq n_\ell$, the constant
\begin{equation}\label{e:cl-hnl}
C_{\ell,p}=\sup_{n\geq 0} \|h_{\ell,n}\|_{H^2_p(\R^n,\R)}
\end{equation}
is finite.

We can replace the
sequence $(h_{\ell,n})$ by the symmetric part of each function and since each
$\Phi_n$ is symmetric, we can assume that all the functions $(h_{\ell,n})$ are
symmetric. We modify by induction the sequence $(h_{\ell,n})$ in order to
define a new sequence $(g_{\ell,n})$ satisfying the compatibility
relations~(D).

\noindent{\it Step 2: Definition of the sequences $(g_{\ell,n})$ for $n\geq 0$}. For $n=0$, we set
$g_{\ell,0} \equiv 0$ for any $\ell\geq 1$. When $n=1$, the continuity of the trace of functions of $H^1(\R,\R)$ yields $|\gamma_1 h_{\ell,1}-\Phi_1(0)|=|h_{\ell,1}(0)-\Phi_0|=|h_{\ell,1}(0)|\leq A/\ell$ for some $A>0$. Let $\varphi$ be a symmetric function belonging to $\mathcal{D}(\R,\R)$ such that $\varphi(0)=1$ and set $g_{\ell,1}=h_{\ell,1}-h_{\ell,1}(0)\varphi$.
This sequence clearly belongs to  $\left(\bigcap_{p>1}H^2_p(\R,\R)\right)\cap H^1(\R,\R)$, and all functions $g_{\ell,1}$ are symmetric. First, we can check that for any $\ell$, $g_{\ell,1}(1)=0=g_{\ell,0}$ and so~(D) is satisfied for $n=0$. Second, since $|h_{\ell,1}(0)|\leq A/\ell$ for some $A>0$ and $\|h_{\ell,1}- \Phi_1\|_{H^1(\R,\R)}\leq B^2/\ell$ as stated in~\eqref{e:density-hnl}, we deduce that $\|g_{\ell,1}- \Phi_1\|_{H^1(\R,\R)}\leq C/\ell$ for some $C>1$ with $C=B^2+A>1$.

Assume that we have defined for any $\ell$ by induction some symmetric functions $g_{\ell,N}$, belonging respectively to $\left(\bigcap_{p>N}H^{2}_p(\R^N,\R)\right)\cap H^{1}(\R^N,\R)$,  satisfying (D) for $n=1,\dots,N$ and for any $\ell$, $p>N$
\begin{equation}\label{e:cv:induction1}
\|g_{\ell,n}\|_{H^2_p(\R^n,\R)}\leq D_{\ell,p}^n
\end{equation}
for some $D_{\ell,p}\geq \max(\Lambda_2^{2/p}C_\ell,C_\ell,2K_2+1,\|g_{\ell,1}\|_{H^2_p(\R^n,\R)})$ and
\begin{equation}\label{e:cv:induction2}
\|g_{\ell,N}-\gamma_N h_{N+1,\ell}\|_{H^1(\R^n,\R)}\leq C^N/\ell
\end{equation}
for some $C>1$.
Now, we define for any $\ell$ the functions $g_{\ell,N+1}$. The function $g_{\ell,N+1}$ will be of the form $g_{\ell,N+1}=h_{\ell,N+1}+\widetilde{h}_{\ell,N+1}$, where $\widetilde{h}_{\ell,N+1}$ is a function depending on $g_{\ell,N}$ and $h_{\ell,N+1}$.

Let us explain how the functions $\widetilde{h}_{\ell,N+1}$ are defined. Since we require that $\gamma_N g_{\ell,N+1}=g_{\ell,N}$, we have necessarily  $\gamma_N \widetilde{h}_{\ell,N+1}=\gamma_N g_{\ell,N+1}-\gamma_{N}h_{\ell,N+1}=g_{\ell,N}-\gamma_{N}h_{\ell,N+1}$. Then to define the function $\widetilde{h}_{\ell,N+1}$, we have to extend the function $g_{\ell,N}-\gamma_{N}h_{\ell,N+1}$ to $\R^{n+1}$, which is possible in view of Lemma~\ref{lem:explicit}. We now use induction assumptions~\eqref{e:cv:induction1},~\eqref{e:cl-hnl}
\begin{align*}
\|g_{\ell,N}-\gamma_{N}h_{\ell,N+1}\|_{H^{2}_p(\R^N,\R)} &\leq \|g_{\ell,N}\|_{H^{2}_p(\R^N,\R)}+\|\gamma_{N}h_{\ell,N+1}\|_{H^{2}_p(\R^N,\R)} \\
&\leq D_{\ell,p}^N+\Lambda_2^{2/p}\|h_{\ell,N+1}\|_{H^{2}_p(\R^{N+1},\R)}\leq D_{\ell,p}^N+\Lambda_2^{2/p}C_\ell\leq 2D_{\ell,p}^N
\end{align*}
where the last inequality comes from the bound $D_{\ell,p}\geq \Lambda_2^{2/p}C_\ell$.

Now, we apply Lemma~\ref{lem:explicit} to $g=g_{\ell,N}-\gamma_{N}h_{\ell,N+1}$, with $M=2$ and successively for $s=2,p>N+1$, $s=1,p=2$.
Then, we get the existence of $\widetilde{h}_{\ell,N+1}\in \left(\bigcap_{p>N+1}H^{2+1/p}_p(\R^{N+1},\R)\right)\cap H^{1+1/2}(\R^{N+1},\R)\subset \left(\bigcap_{p>N}H^{2}_p(\R^{N+1},\R)\right)\cap H^{1}(\R^{N+1},\R)$ such that $\gamma_N \widetilde{h}_{\ell,N+1}=g_{\ell,N}-\gamma_{N}h_{\ell,N+1}$, which satisfies~
\begin{itemize}
\item $\|\widetilde{h}_{\ell,N+1}\|_{H^{2}_p(\R^N,\R)}\leq 2K_2 D_{\ell,p}^N$ with $p>N+1$. In view of $g_{\ell,N+1}=h_{\ell,N+1}+\widetilde{h}_{\ell,N+1}$ and $D_{\ell,p}\geq \max(C_\ell,2K_2+1)$, it implies $\|\widetilde{h}_{\ell,N+1}\|_{H^{2}_p(\R^N,\R)}\leq \|h_{\ell,N+1}\|_{H^{2}_p(\R^N,\R)}+\|\widetilde{h}_{\ell,N+1}\|_{H^{2}_p(\R^N,\R)}\leq C_\ell+2K_2 D_{\ell,p}^N\leq D_{\ell,p}^{N+1}$ which means that~\eqref{e:cv:induction1} is satisfied for $n=N+1$.
\item $\|\widetilde{h}_{\ell,N+1}\|_{H^1(\R^{n+1},\R)} \leq K_2\,\|g_{\ell,N}-\gamma_{N}h_{\ell,N+1}\|_{H^{1/2}(\R^n,\R)}$. By~\eqref{e:cv:induction2} we can replaced $C$ by $\max(C,K_2)$ and deduce
\begin{align}
  \|\widetilde{h}_{\ell,N+1}\|_{H^1(\R^{N+1},\R)} &\leq C\,\|g_{\ell,N}-\gamma_{N}h_{\ell,N+1}\|_{H^{1/2}(\R^n,\R)}\nonumber \\
  &\leq C^N \|g_{\ell,0}-\gamma_{N}h_{\ell,1}\|_{H^{1/2}(\R^n,\R)}\leq C^N/\ell.\label{e:ineq:induc}
\end{align}
which means that~\eqref{e:cv:induction2} holds for $n=N+1$.
\end{itemize}
We then define a sequence $(g_{\ell,N})$ which satisfies the two induction assumptions for $n=N+1$.

\noindent{\it Step~3: Proof of Properties (A)--(D) for $g_{\ell,N+1}=h_{\ell,N+1}+\widetilde{h}_{\ell,N+1}$}.
Property (A) is obvious by definition of $(g_{\ell,0})$. By construction, we have that, for any $\ell$, $\widetilde{h}_{\ell,N+1}$ is a symmetric function belonging to $\bigcap_{p>N+1}H^{2}_{p}(\R^{n+1},\R)\subset \mathcal{C}^1(\R^{n+1},\R)$ (see Lemma~\ref{lem:emb-C1}), and so is $g_{\ell,N+1}$. Further, applying once more Lemma~\ref{lem:emb-C1} with $p=p_n>3/2n$ and property~\eqref{e:cv:induction1} of the sequence $(g_{\ell,n})$ we get
\[
\|g_{\ell,n}\|_{L^\infty(\R^n,\R)}+\|\nabla g_{\ell,n}\|_{L^\infty(\R^n,\R)}\leq \widetilde{D}_\ell^n
\]
with $\widetilde{D}_\ell=\Lambda_{1/2}D_{\ell,p_n}$. Hence  (B) is satisfied. The relation $\gamma_{N}g_{\ell,N+1}=g_{\ell,N}$ directly ensues from their definitions and yields Property (D).  Since $g_{\ell,N+1}-\Phi_{N+1}=(h_{\ell,N+1}-\Phi_{N+1})+\widetilde{h}_{\ell,N+1}$, combining \eqref{e:density-hnl} and~\eqref{e:ineq:induc} implies that $(C)$ is satisfied for $n=N+1$.
\end{proof}

\bigskip

\noindent {\Large \bf Acknowledgements}

\medskip

\noindent The authors would like to thank Giovanni Conforti, Nicolas Privault, Mathias Rafler and Sylvie Roelly for fruitful discussions and for providing interesting references. The authors are also grateful to the associate editor and the two anonymous referees who helped to improve a previous version of the manuscript.

\bibliographystyle{plainnat} 
\bibliography{stein}

\end{document}